\setlist[itemize]{noitemsep,nolistsep}
\setlist[enumerate]{noitemsep,nolistsep}
\let\mathcal\mathscr
\def\C{{\bf C}}
\def\N{{\bf N}}
\def\R{{\bf R}}
\def\Q{{\bf Q}}
\def\P{{\bf P}}
\def\Z{{\bf Z}}
\def\L{{\Lambda}}
\def\hKm{hyperk\"ahler manifold}
\def\bbw#1#2{\textstyle{\bigwedge\hskip-0.9mm^{#1}}\hskip0.2mm{#2}}
\def\phi{{\varphi}}
\def\cC{\mathcal{C}}
\def\cD{\mathcal{D}}
\def\cF{\mathcal{F}}
\def\cFlop{{\cF\hskip-.2mm  lop}}
\def\cH{\mathcal{H}}
\def\cK{\mathcal{K}}
\def\cM{\mathcal{M}}
\def\cO{\mathcal{O}}
\def\cP{\mathcal{P}}
\def\cU{\mathcal{U}}
\def\lra{\longrightarrow}
\def\llra{\hbox to 10mm{\rightarrowfill}}
\def\lllra{\hbox to 15mm{\rightarrowfill}}
\def\llla{\hbox to 10mm{\leftarrowfill}}
\def\lllla{\hbox to 15mm{\leftarrowfill}}
\def\dra{\dashrightarrow}
\def\thra{\twoheadrightarrow}
\def\hra{\hookrightarrow}
\def\isom{\simeq}
\def\eps{\varepsilon}
\def\ie{\hbox{i.e.}}
\def\tO{\widetilde{O}}
 \def\vide{\varnothing}
\DeclareMathOperator{\isomto}{\stackrel{{}_{\scriptstyle\sim}}{\to}}
\DeclareMathOperator{\isomdra}{\stackrel{{}_{\scriptstyle\sim}}{\dra}}
\DeclareMathOperator{\isomlra}{\stackrel{{}_{\scriptstyle\sim}}{\lra}}
\DeclareMathOperator{\Amp}{Amp}
\DeclareMathOperator{\Aut}{Aut}
\DeclareMathOperator{\Bir}{Bir}
\DeclareMathOperator{\disc}{disc}
\def\div{\mathop{\rm div}\nolimits}
 \DeclareMathOperator{\Exc}{Exc}
\DeclareMathOperator{\Gr}{Gr}
\DeclareMathOperator{\Hom}{Hom}
\DeclareMathOperator{\Id}{Id}
\DeclareMathOperator{\Int}{Int}
\def\Im{\mathop{\rm Im}\nolimits}
\DeclareMathOperator{\Ker}{Ker}
\DeclareMathOperator{\KKK}{K3}
\DeclareMathOperator{\Mov}{Mov}
\DeclareMathOperator{\Nef}{Nef}
\DeclareMathOperator{\Pic}{Pic}
\DeclareMathOperator{\Pos}{Pos}
\def\llra{\hbox to 10mm{\rightarrowfill}}
\def\lllra{\hbox to 15mm{\rightarrowfill}}
\newtheorem{lemm}{Lemma}[section]
\newtheorem{theo}[lemm]{Theorem}
\newtheorem{coro}[lemm]{Corollary}
\newtheorem{prop}[lemm]{Proposition}
\theoremstyle{definition}
\newtheorem{rema}[lemm]{Remark}
\newtheorem{exam}[lemm]{Example}
\theoremstyle{remark}
\newtheorem*{remark*}{Remark}
\newtheorem*{note*}{Note}
\begin{document}
\title[Period map for hyperk\"ahler fourfolds]{On the period map for polarized hyperk\"ahler fourfolds}

\author[O.~Debarre]{Olivier Debarre}
\address{\parbox{0.9\textwidth}{Univ  Paris Diderot, \'Ecole Normale Su\-p\'e\-rieu\-re, PSL Research University,
\\[1pt] 
CNRS, D\'epar\-te\-ment Math\'ematiques et Applications
\\[1pt]
45 rue d'Ulm, 75230 Paris cedex 05, France}}
\email{{olivier.debarre@ens.fr}}

\author[E.~Macr\`i]{Emanuele Macr\`i}
\address{\parbox{0.9\textwidth}{Northeastern University,
\\[1pt] 
Department of Mathematics
\\[1pt]
360 Huntington Avenue, Boston, MA 02115, USA}}
\email{{e.macri@northeastern.edu}}


\subjclass[2010]{14C34, 14E07, 14J50, 14J60}
\keywords{Hyperk\"ahler fourfolds, Birational isomorphisms, Torelli Theorem, Period domains, Noether--Lefschetz loci, Cones of divisors.}
\thanks{E.~M.~ was partially supported by the NSF grants DMS-1523496, DMS-1700751, and by a Poincar\'e Chair from the Institut Henri Poincar\'e and the Clay Mathematics Institute.}

\begin{abstract}
We study smooth projective hyperk\"ahler fourfolds that are deformations of Hilbert squares of K3 surfaces and are equipped with a polarization of fixed degree and divisibility.\ They are parametrized by a quasi-projective irreducible 20-dimensional moduli space and  Verbitksy's Torelli theorem implies that their period map is an open embedding.\ 

Our main result is that  the complement of the image of the period map is a finite union of explicit Heegner divisors that we describe.\ We also  prove that   infinitely many Heegner divisors in a given period space have the property that  their general points correspond to fourfolds which are isomorphic to   Hilbert squares of a K3 surfaces, or to double EPW sextics. 

In two appendices, we determine the groups of biregular or birational automorphisms of various projective hyperk\"ahler fourfolds with Picard number 1 or 2.
\end{abstract}

\maketitle

\setcounter{tocdepth}{1}

\section{Introduction}
We consider smooth projective hyperk\"ahler fourfolds $X$  which are deformations of Hilbert squares of K3 surfaces (one says that  $X$ is of $\KKK^{[2]}$-type).\  The abelian group $H^2(X,\Z)$ is free of rank 23 and it is equipped the Beauville--Bogomolov--Fujiki form $q_X$, a non-degenerate $\Z$-valued quadratic form of signature $(3,20)$ (\cite[Th\'eor\`eme~5]{beaa}).\  A polarization $H$ on $X$ is the class of an ample line bundle on $X$ that is primitive (\ie, non-divisible) in the group $H^2(X,\Z)$.\  The {\em square} of $H$ is the  positive even integer $2n:=q_{X}(H)$ and its {\em divisibility} is the integer $\gamma\in\{1,2\}$ such that $H\cdot H^2(X,\Z)=\gamma\Z$ (the case $\gamma=2$  only occurs when $n\equiv -1\pmod4$).

Smooth polarized hyperk\"ahler fourfolds $(X,H)$  of $\KKK^{[2]}$-type of degree $ 2n$ and divisibility $\gamma$ admit an irreducible quasi-projective coarse moduli space $\cM^{(\gamma)}_{2n}$ of dimension $20$.\  The period map (see Section~\ref{sec21}) 
\begin{equation*}
\wp^{(\gamma)}_{2n}\colon\cM^{(\gamma)}_{2n}\lra  \cP^{(\gamma)}_{2n}
\end{equation*}
is algebraic  and it is an open embedding  by Verbitsky's Torelli  Theorem~\ref{tor1}.\  Our main result is that the image of $ \wp^{(\gamma)}_{2n}$ is the complement of a finite union of   {\em Heegner divisors} (this can also be deduced  from the general results in \cite{amve}) which can be explicitly listed (Theorem~\ref{imper}).
 
The main ingredient in the proof is the explicit determination of the nef   and movable cones of  smooth projective hyperk\"ahler fourfolds of $\KKK^{[2]}$-type (see Theorem~\ref{thm:NefConeHK4}).\ This is a simple consequence of previous results by Markman~(\cite{marsur}), Bayer--Macr\`i~(\cite{bama}), Bayer--Hassett--Tschinkel~(\cite{bht}), and Mongardi~(\cite{mon2}).

The \emph{Noether--Lefschetz locus} is the inverse image  by the period map in $\cM^{(\gamma)}_{2n}$ of the union of all Heegner divisors.\ Its irreducible components  were shown in~\cite[Theorem~1.5]{ber} to generate (over $\Q$) the Picard group of $\cM^{(\gamma)}_{2n}$.\ As an application of our Theorem~\ref{thm:NefConeHK4}, we study in Section~\ref{sec9}  birational isomorphisms between some of these components.\ In particular, we show that points corresponding to Hilbert squares of K3 surfaces are dense in the moduli spaces $\cM^{(\gamma)}_{2n}$.
 
In the two appendices, we collect results on  biregular and birational automorphisms of certain projective hyperk\"ahler fourfolds with Picard number 1 or 2.\
These results are needed in some of the arguments in Section~\ref{sec9}.

Since   the nef and movable cones can be described in all dimensions, many of our results extend with some modifications to smooth projective hyperk\"ahler manifolds of $\KKK^{[n]}$-type.\
More details in the higher dimensional case will appear in~\cite{debsurvey}.

\subsection*{Acknowledgements.}
We would like to thank Ekaterina Amerik, Arend Bayer, Samuel Bois\-si\`ere, Christian Lehn, Eyal Markman, Kieran O'Grady, and Emmanuel Ullmo for useful discussions and suggestions.

\section{Lattices}\label{sec1}

A lattice is a free abelian group $\L$ of finite rank endowed with a $\Z$-valued non-degenerate quadratic form~$q$.\ It is {\em even} if  $q$ only takes even values.\ We extend $q$ to a $\Q$-valued quadratic form on $\L\otimes\Q$, hence also on the dual
$$\L^\vee:=\Hom_\Z(\L,\Z)=\{x\in \L\otimes\Q\mid \forall y\in\L\quad x\cdot y\in\Z\}.$$
The {\em discriminant group} of $\L$ is the finite abelian group 
$$D(\L):=\L^\vee/\L.$$
 The lattice $\L$ is {\em unimodular} if the group $D(\L)$ is trivial.\ If $x$ is a non-zero element of $\L$, we define the integer $\div_\L (x)$ (the    {\em divisibility} of $x$) as the positive generator of the subgroup $x\cdot\L$ of $\Z$.\ We also consider $ x/\!\div_\L(x)$, a primitive (\ie, non-zero and non-divisible) element of $\L^\vee$, and its class $x_*=[x/\!\div_\L(x)]$ in the group $ D(\L)$, an element of order $\div_\L(x)$.
 
If $t$ is a non-zero integer, we let $\L(t)$ be the lattice $(\L,tq)$.\ We let $I_1$ be the lattice $\Z$ with the quadratic form
$q(x)=x^2$ and we let $U$ (the hyperbolic plane) be the even unimodular lattice $\Z^2$ with the quadratic form
$q(x_1,x_2)=2x_1x_2$.\ There is a unique positive definite even unimodular lattice of rank~8, which we denote by $E_8$.\ 

{\em Assume now that the lattice $\L$ is even.}\  Following \cite{nik}, we
define a quadratic form $\bar q\colon D(\L)\to \Q/2\Z$ 
by setting $\bar q([x]):=q(x) \in \Q/2\Z$.\ The {\em stable orthogonal group} $\tO(\L,q)$ is the kernel of  the canonical map
$$O(\L,q)\lra O(D(\L),\bar q).$$
This map is surjective when $\L$ is indefinite and its rank is at least the minimal number of generators of the finite abelian group $D(\L)$ plus 2 (\cite[Theorem~1.14.2]{nik}).

We will  use  the following classical result  (see~\cite[Lemma~3.5]{ghs}).\ 

\begin{theo}[Eichler's criterion]\label{eic}
Let $\Lambda$ be an even lattice that contains at least two orthogonal copies of $U$.\ The $\tO(\L,q)$-orbit of a  primitive vector  $x\in\L$ is determined by the integer $q(x)$ and the element $x_*$ of $D(\L)$.
\end{theo}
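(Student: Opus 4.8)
The plan is to prove the a priori stronger statement that the subgroup $E(\Lambda)\subseteq\tO(\Lambda,q)$ generated by \emph{Eichler transvections} already acts transitively on the set of primitive $x\in\Lambda$ with prescribed $q(x)$ and prescribed $x_*\in D(\Lambda)$. Recall that for an isotropic vector $e\in\Lambda$ and any $a\in e^\perp\cap\Lambda$ the associated transvection is
\[
t(e,a)\colon v\longmapsto v-(a\cdot v)\,e+(e\cdot v)\,a-\tfrac12(a\cdot a)(e\cdot v)\,e .
\]
The first, routine, step is to check that $t(e,a)$ is an isometry of $\Lambda$ with inverse $t(e,-a)$ — here evenness of $\Lambda$ is what makes $\tfrac12(a\cdot a)(e\cdot v)\in\Z$ for $v\in\Lambda$ — and that, because $a\cdot v$ and $e\cdot v$ remain integers even for $v\in\Lambda^\vee$, one has $t(e,a)(v)-v\in\Z e+\Z a\subseteq\Lambda$; thus $t(e,a)$ fixes $D(\Lambda)$ pointwise, so $t(e,a)\in\tO(\Lambda,q)$. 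Conversely, every element of $\tO(\Lambda,q)$, being an isometry fixing $D(\Lambda)$, preserves both $q(x)$ and $x_*=[x/\!\div_\Lambda(x)]$, so the theorem is \emph{equivalent} to the transitivity statement for $E(\Lambda)$.

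For transitivity, fix a decomposition $\Lambda=U_1\oplus U_2\oplus L$ with $U_i=\Z e_i\oplus\Z f_i$ hyperbolic and $L$ even (possible by hypothesis), and let $x$ be primitive with $q(x)=2d$ and $\div_\Lambda(x)=\gamma$, the order of $x_*$. I would reduce $x$ by a sequence of Eichler transvections to a normal form depending only on $d$, $\gamma$ and $x_*$. \textbf{Move 1.} Using transvections supported on $U_1$ and $U_2$ — including the ``bridge'' transvections $t(f_i,e_j)$ and $t(f_i,-e_j)$, which transport $x$ between the two hyperbolic planes — run a Euclidean-algorithm-style reduction on the pair of integers $(x\cdot e_1,\,x\cdot f_1)$ so as to arrange $x\cdot e_1=0$ and $x\cdot f_1=\gamma$. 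Then $x=\gamma e_1+r$ with $r\in U_2\oplus L$ and $q(r)=2d$. \textbf{Move 2.} The transvections $t(f_1,a)$ with $a\in U_2\oplus L$ act by $r\mapsto r+\gamma a$ (they temporarily re-introduce $e_1,f_1$-components, which Move 1 clears again), and combined with the transvections internal to $U_2\oplus L$ they suffice to bring $r$ into a canonical position: when $\gamma=1$, taking $a=-r$ yields $x\sim e_1+d f_1$; when $\gamma=2$, using the unimodularity of $U_2$ one reaches a vector supported on $U_1\oplus L$ of the form $2e_1+c f_1+w$ with $w\in L$, $w/2\in L^\vee$ and $[w/2]=x_*$ under the identification $D(L)\isom D(\Lambda)$, after which shifting $w$ by $2L$ normalizes $w$ and $q(x)=2d$ determines $c$. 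The only numerical input is the compatibility $q(x)\equiv\gamma^2\bar q(x_*)\pmod{2\gamma^2}$ (for the $\KKK^{[2]}$-lattice, precisely the congruence $n\equiv-1\pmod4$ in divisibility $2$), which is what makes $c$ an integer.

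The main obstacle is Move 1, together with the $\gamma=2$ bookkeeping in Move 2: one must verify that at each stage a transvection effecting the next reduction actually exists — this is exactly where the hypothesis of \emph{two} orthogonal copies of $U$ enters and cannot be weakened, since a single copy would not provide enough freedom to alter $x\cdot e_1$ and $x\cdot f_1$ — and when $\gamma=2$ every available pairing is even, so the Euclidean reduction is run on $\tfrac12(x\cdot e_1)$ and $\tfrac12(x\cdot f_1)$ and the normalization of $w$ must be carried out compatibly. Everything else is bookkeeping. This argument goes back to Eichler; alternatively one may simply invoke \cite[Lemma~3.5]{ghs} — which in turn relies on \cite{nik} for the surjectivity properties of $\tO$ — rather than reproduce it.
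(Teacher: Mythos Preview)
The paper does not prove this theorem at all: it is stated as a classical result and simply referenced to \cite[Lemma~3.5]{ghs}. Your proposal goes further by sketching the standard Eichler-transvection argument that underlies that reference, and the outline --- verifying $t(e,a)\in\tO(\Lambda,q)$, then reducing a primitive $x$ to a normal form via a Euclidean-style sequence of transvections using the two copies of $U$ --- is correct and is exactly the approach of Eichler (and of \cite{ghs}).

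One point to tighten: your Move~2 is only worked out explicitly for $\gamma\in\{1,2\}$. That is enough for every application in this paper, since $D(\Lambda_{\KKK^{[2]}})\isom\Z/2\Z$ forces the divisibility to be $1$ or $2$, but the theorem as stated applies to an arbitrary even lattice containing $U^{\oplus 2}$, where $\gamma$ can be any divisor of the exponent of $D(\Lambda)$. For general $\gamma$ the normalization of $r\in U_2\oplus L$ modulo $\gamma(U_2\oplus L)$ together with the clearing of the reintroduced $f_1$-component needs the full strength of having $U_2$ unimodular (one writes $r=r_{U_2}+r_L$, kills $r_{U_2}$ using $t(f_1,\cdot)$ and transvections internal to $U_1\oplus U_2$, and then shows that the residue of $r_L$ in $L/\gamma L$ is determined by $x_*$ via the isomorphism $D(L)\isom D(\Lambda)$). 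Your final remark that one may instead just invoke \cite[Lemma~3.5]{ghs} is, of course, precisely what the paper does.
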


\section{Moduli spaces, period spaces, and period maps}\label{sec2}

\subsection{Moduli spaces} Let $X$ be a (smooth) hyperk\"ahler (also called irreducible symplectic) fourfold    of $\KKK^{[2]}$-type (see \cite[Section~3]{ghssur} for the main definitions).\ The lattice $(H^2(X,\Z),q_{X})$ defined in the introduction is isomorphic to
the even lattice 
$$\L_{\KKK^{[2]}}:=U^{\oplus 3}\oplus E_8(-1)^{\oplus 2}\oplus I_1(-2)$$
with signature $(3,20)$ and discriminant group  $\Z/2\Z$.\ The divisibility of a primitive element is therefore 1 or 2, and $\tO(\L_{\KKK^{[2]}})=O(\L_{\KKK^{[2]}})$.

As recalled in the introduction, hyperk\"ahler fourfolds $X$ of $\KKK^{[2]}$-type with a polarization of  fixed (positive) square $2n$ and divisibility $\gamma $  in the lattice $(H^2(X,\Z),q_X)$ have a quasi-projective coarse moduli space  $\cM^{(\gamma)}_{2n}$  which is  irreducible  and 20-dimensional  when $\gamma=1$, or when $\gamma=2$ and $n \equiv -1 \pmod4$ (\cite[Remark~3.17]{ghssur}).

\subsection{Period spaces and period maps}\label{sec21}

By Eichler's criterion (Theorem~\ref{eic}), primitive elements   of the lattice  $\L_{\KKK^{[2]}}$  with fixed positive square $2n$ and fixed divisibility $\gamma\in\{1,2\}$ form a single $O(\L_{\KKK^{[2]}})$-orbit.\ We fix one such element~$h_0$.\ 
  
 If $\gamma=1$, we have
\begin{equation}\label{lats}
h_0^\bot \isom    U^{\oplus 2}\oplus E_8(-1)^{\oplus 2}\oplus I_1(-2)\oplus I_1(-2n)=:\L^{(1)}_{\KKK^{[2]},2n},
\end{equation}  
a lattice with discriminant 
group $ \Z/2n\Z \times\Z/2 \Z$, with   $\bar q(1,0)=-\frac{1}{2n}$ and $\bar q(0,1)=-\frac12$  (see the proof of Proposition \ref{propirr}). 

If $\gamma=2$, we have $n \equiv -1\pmod4$ and
\begin{equation}\label{latns}
h_0^\bot \isom  U^{\oplus 2}\oplus E_8(-1)^{\oplus 2}\oplus \begin{pmatrix}-2&-1\\-1&-\tfrac{n+1}{2}\end{pmatrix}=:\L^{(2)}_{\KKK^{[2]},2n},
\end{equation}
a lattice  with discriminant 
group $\Z/n\Z$, with  $\bar q(1)=-\frac{2}{n}$ (see the proof of Proposition \ref{propirr}).

We now describe the period map for polarized hyperk\"ahler fourfolds   of $\KKK^{[2]}$-type.\ The complex variety 
$$\Omega_{h_0} := \{ x\in \P(\L_{\KKK^{[2]}} \otimes \C)\mid  x\cdot h_0=0,\ x\cdot x=0,\ x\cdot \bar x>0\}$$
 has two connected components, interchanged by complex conjugation, which are Hermitian symmetric domains of
type IV.\ It is acted on by the   group 
 $$ O(\L_{\KKK^{[2]}},h_0):=\{\Phi\in  O(\L_{\KKK^{[2]}})\mid \Phi(h_0)=h_0\}.$$
By results of Baily--Borel and Griffiths, the quotient
$\cP^{(\gamma)}_{2n} :=O(\L_{\KKK^{[2]}},h_0)\backslash \Omega_{h_0}$
 is an irreducible quasi-projective variety and the {\em period map}
 \begin{equation}\label{defp}
 \wp^{(\gamma)}_{2n}\colon\cM^{(\gamma)}_{2n}\lra  \cP^{(\gamma)}_{2n}
 \end{equation}
is algebraic.\ Alternatively,  
one has
$$\Omega_{h_0} \isom \{ x\in \P(\L^{(\gamma)}_{\KKK^{[2]},2n} \otimes \C)\mid    x\cdot x=0,\ x\cdot \bar x>0\}$$
and   the group $O(\L_{\KKK^{[2]}},h_0)$ can be identified with the stable orthogonal group 
  $\widetilde O(\L^{(\gamma)}_{\KKK^{[2]},2n})$  (\cite[Proposition 3.12 and Corollary 3.13]{ghs}).
  
The full orthogonal group  $O(\L^{(\gamma)}_{\KKK^{[2]},2n})$ also acts on $\Omega_{h_0}$, hence the quotient group
$$O(\L^{(\gamma)}_{\KKK^{[2]},2n})/\widetilde O(\L^{(\gamma)}_{\KKK^{[2]},2n})\isom O(D(\L^{(\gamma)}_{\KKK^{[2]},2n}))$$
 acts on the period space $ \cP^{(\gamma)}_{2n}$ (where  $-\Id$   acts trivially).\ We determine this group and describe this action in the next proposition, assuming for simplicity that $n$ is odd.\ For any non-zero integer $r$, we denote by $\rho(r)$ the number of prime factors of~$r$.

\begin{prop}\label{invol}
Assume that $n$ is odd.\ The period space $ \cP^{(\gamma)}_{2n}$ is acted on generically freely by the following groups:
\begin{itemize}
\item if   $n\equiv 1\pmod4$ (so that $\gamma=1$), by  the group $(\Z/2\Z)^{\max\{\rho(n),1\}}$;
\item if $n\equiv -1\pmod4$, by the group $(\Z/2\Z)^{\rho(n)-1}$.
\end{itemize}
\end{prop}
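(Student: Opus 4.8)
The plan is to identify the action explicitly via the isomorphism $O(\L^{(\gamma)}_{\KKK^{[2]},2n})/\widetilde O(\L^{(\gamma)}_{\KKK^{[2]},2n})\isom O(D(\L^{(\gamma)}_{\KKK^{[2]},2n}))$ and then compute the orthogonal group of the discriminant form in each case. For $\gamma=1$ the discriminant group is $\Z/2\Z\times\Z/2n\Z$ with $\bar q(1,0)=-\tfrac12$, $\bar q(0,1)=-\tfrac1{2n}$; for $\gamma=2$ it is $\Z/n\Z$ with $\bar q(1)=-\tfrac2n$. Since $n$ is odd, the prime-to-$2$ part splits off, and one reduces by CRT to computing the orthogonal group of each $p$-primary piece $\Z/p^a\Z$ with a quadratic form of the shape $u/p^a$ (mod $2\Z$), $\gcd(u,p)=1$. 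The key local computation: $\Aut(\Z/p^a\Z,u x^2/p^a)$ consists of those $s\in(\Z/p^a\Z)^\times$ with $s^2\equiv 1$, i.e. $s\equiv\pm1$; for odd $p$ this is a group of order $2$, generated by $-\Id$. Assembling over the $\rho(n)$ odd primes dividing $n$ gives $O(D)\isom(\Z/2\Z)^{\rho(n)}$ in the $\gamma=1$, $n\equiv1\pmod 4$ case (here the $\Z/2\Z$ factor contributes trivially to automorphisms, and the convention $\max\{\rho(n),1\}$ covers $n=1$, where $D=\Z/2\Z$ has trivial isometry group but one still lists the full period space as acted on by $\Z/2\Z$ via $-\Id$ — I would phrase this as: the nontrivial quotient group is $(\Z/2\Z)^{\rho(n)}$ and for $n=1$ it is the trivial group $(\Z/2\Z)^0$, so the statement must really be read with the convention that for $n=1$ one writes $\Z/2\Z$ to account for $-\Id$; I will state this carefully).

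For $\gamma=2$, $n\equiv -1\pmod4$: then $D=\Z/n\Z$ and $-\Id$ already acts trivially on $\cP^{(\gamma)}_{2n}$, so the group acting effectively is $O(D(\L^{(2)}))/\{\pm\Id\}\isom(\Z/2\Z)^{\rho(n)}/(\Z/2\Z)=(\Z/2\Z)^{\rho(n)-1}$. One must check $O(D)\isom(\Z/2\Z)^{\rho(n)}$ as before, using that $n$ is odd so $\Z/n\Z\isom\prod_{p\mid n}\Z/p^{a_p}\Z$ and each factor contributes exactly $\{\pm1\}$. In the $\gamma=1$, $n\equiv1\pmod4$ case instead, because $D=\Z/2\Z\times\Z/2n\Z$ contains a $2$-torsion piece on which $-\Id$ is the identity but the factor $\Z/2n\Z$ sees $-\Id$ nontrivially, the element $-\Id\in O(D)$ is nontrivial; hence the quotient by the image of $-\Id\in O(\L)$ does nothing (that image is $-\Id$, which is already trivial on $\Omega_{h_0}$), so one keeps the full group $(\Z/2\Z)^{\rho(n)}$, matching $\max\{\rho(n),1\}$ once one includes the $n=1$ bookkeeping.

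For the \emph{generic freeness} assertion I would argue that for each nontrivial $\sigma\in O(D)/\{\pm\Id\}$, the fixed locus of the corresponding automorphism of $\cP^{(\gamma)}_{2n}$ is a proper closed (in fact a finite union of Heegner-type) subvariety. Concretely, lift $\sigma$ to $\widetilde\sigma\in O(\L^{(\gamma)}_{\KKK^{[2]},2n})$; a period point $[x]$ fixed by $\widetilde\sigma$ has the property that $\widetilde\sigma$ preserves the transcendental lattice, so the nontrivial $\pm1$-eigenspace decomposition of $\widetilde\sigma$ on $\L^{(\gamma)}_{\KKK^{[2]},2n}\otimes\Q$ forces $x$ to lie in the sublattice $\Lambda^{\widetilde\sigma=1}\otimes\C$ or its orthogonal; since $\widetilde\sigma\ne\pm\Id$ (on $\Omega_{h_0}$), both eigenspaces are proper, so the fixed locus has strictly smaller dimension. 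Running over the finitely many $\sigma$ gives a proper closed subset whose complement is acted on freely.

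The main obstacle is the careful bookkeeping around $-\Id$ and the case $n=1$: deciding, in each of the two congruence cases, whether $-\Id\in O(\L^{(\gamma)}_{\KKK^{[2]},2n})$ maps to a nontrivial element of $O(D)$ and whether that element acts trivially on $\Omega_{h_0}$, and reconciling this with the stated exponents $\max\{\rho(n),1\}$ versus $\rho(n)-1$. Everything else is the standard CRT decomposition of a finite quadratic form on a cyclic group of odd order together with the elementary fact that the isometry group of $(\Z/p^a\Z,u x^2/p^a)$ for odd $p$ is exactly $\{\pm1\}$.
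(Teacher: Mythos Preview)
Your treatment of the case $\gamma=2$ and your generic-freeness argument are fine, but the $\gamma=1$ analysis has a genuine gap.

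When $\gamma=1$ and $n$ is odd, the $2$-Sylow part of $D(\L^{(1)}_{\KKK^{[2]},2n})$ is $\Z/2\Z\times\Z/2\Z$, not a single $\Z/2\Z$, and it does \emph{not} always contribute trivially to $O(D)$.\ The values of $\bar q$ on the three nonzero $2$-torsion elements are $-\tfrac12$, $-\tfrac{n}{2}$, $-\tfrac{n+1}{2}$ in $\Q/2\Z$.\ When $n\equiv 1\pmod4$ these read $-\tfrac12,-\tfrac12,1$, so the swap of the two $\Z/2\Z$ factors is an isometry and $O(D)\isom(\Z/2\Z)^{\rho(n)+1}$, not $(\Z/2\Z)^{\rho(n)}$.\ When $n\equiv -1\pmod4$ the three values are all distinct, the swap is forbidden, and $O(D)\isom(\Z/2\Z)^{\rho(n)}$.\ You never treat this last case ($\gamma=1$, $n\equiv -1\pmod4$), which is part of the statement.

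Your handling of $-\Id$ is also backwards.\ Since $-\Id\in O(\L^{(\gamma)}_{\KKK^{[2]},2n})$ acts trivially on $\Omega_{h_0}$, its image in $O(D)$ acts trivially on $\cP^{(\gamma)}_{2n}$, so the effective group is $O(D)/\langle\text{image of }-\Id\rangle$, not $O(D)$ itself.\ For $\gamma=1$, $n\equiv 1\pmod4$, $n>1$, this gives $(\Z/2\Z)^{\rho(n)+1}/(\Z/2\Z)=(\Z/2\Z)^{\rho(n)}$; for $n=1$ one has $-\Id=\Id$ on $D$, so $O(D)=\Z/2\Z$ (the swap) acts effectively --- this is the actual source of the $\max\{\rho(n),1\}$, not a bookkeeping convention.\ For $\gamma=1$, $n\equiv -1\pmod4$, one gets $(\Z/2\Z)^{\rho(n)}/(\Z/2\Z)=(\Z/2\Z)^{\rho(n)-1}$, matching $\gamma=2$.\ Your argument happens to land on the right exponent in the $n\equiv 1\pmod4$ case only because two errors cancel (missing the swap, and not quotienting by $-\Id$).
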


\begin{proof}
{\bf Case $\gamma=1$}.\ Since $n$ is odd, \eqref{lats} implies $D(\L^{(\gamma)}_{\KKK^{[2]},2n}) \isom \Z/n\Z\times  \Z/2\Z\times \Z/2\Z$.\ This decomposition is still orthogonal for $\bar q$ and the
 values of $\bar q$ at the points of order 2 are $\bar q(0,0,1)=-\frac{1}{2}$, $\bar q(0,1,0)=-\frac{n}{2}$, and $\bar q(0,1,1)=-\frac{n+1}{2}$ in $\Q/2\Z$.
        
When $n\equiv -1\pmod4$, these three values are all distinct and any isometry $\Phi$ of $ ( D(\L^{(1)}_{\KKK^{[2]},2n}),\bar q)$ must therefore be the identity on both $\Z/2\Z$ factors, hence must preserve their orthogonal $\Z/n\Z $.\  Write $\Phi(1,0,0 )=(a,0,0 )$; since $\bar q(1,0,0 )=-\frac{2}{n}$, we have  $\frac{2}{n}=\frac{2a^2}{n}\pmod{2\Z}$, hence $a^2=1\pmod{n}$.\ Since $n$ is odd, the group $O(D(\L^{(1)}_{\KKK^{[2]},2n}))$ is therefore isomorphic to $(\Z/2\Z)^{\rho(n)}$.\ The proposition follows since only $-\Id$ acts trivially.

When $n\equiv 1\pmod4$, there are extra isometries given by $(0,1,0)\leftrightarrow (0,0,1 )$, $(1,0,0 )\mapsto (a,0,0 )$, where $a^2\equiv 1\pmod{n}$ (and when $n=1$, we have $-\Id=\Id$).\ 

\medskip
{\bf Case $\gamma=2$} (hence $n\equiv -1\pmod4$).\ We have $D(\L^{(\gamma)}_{\KKK^{[2]},2n}) \isom \Z/n\Z$, with $\bar q(1)= -\frac{2}{n}$, and we proceed as in   the first case.\ This proves the proposition.
\end{proof}

\subsection{The Torelli theorem}\label{sec21a}

Verbitsky's Torelli theorem   takes  the following form (\cite{ver},~\cite[Theorem~3.14]{ghssur}).

\begin{theo}[Verbitsky]\label{tor1}
For each positive integer $ n$ and each divisibility $\gamma\in\{1,2\}$, the period map 
 \begin{equation*} 
 \wp^{(\gamma)}_{2n}\colon\cM_{2n}^{(\gamma)}\lra  \cP^{(\gamma)}_{2n}
 \end{equation*}
 is an open embedding. 
\end{theo}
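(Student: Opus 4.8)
The plan is to reduce this statement to the unpolarized global Torelli theorem of Verbitsky (\cite{ver}; see also \cite[Theorem~3.14]{ghssur}) together with Markman's results on the monodromy group and the monodromy representation of manifolds of $\KKK^{[2]}$-type (\cite{marsur}). Since $\wp^{(\gamma)}_{2n}$ is already known to be algebraic (Baily--Borel and Griffiths, recalled in Section~\ref{sec21}), it suffices to establish two facts: that $\wp^{(\gamma)}_{2n}$ is a local isomorphism (local Torelli), and that it is injective on $\C$-points. These imply the theorem by a formal argument: an \'etale morphism has open image, and an \'etale morphism of varieties that is injective on closed points is an open immersion onto that image, since then its diagonal, an open subscheme of the fibre product containing every closed point, is the whole fibre product, making the morphism an \'etale monomorphism.

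For local Torelli, I would use that $\Def(X)$ is smooth of dimension $h^{1,1}(X)=b_2(X)-2=21$ (\cite{beaa}), that the polarized deformations of $(X,H)$ cut out a smooth divisor in $\Def(X)$ of dimension $20=\dim\Omega_{h_0}$, and that contraction with a generator of $H^{2,0}(X)$ identifies the tangent space of that divisor with $H^{1,1}(X)\cap h_0^\bot$, which is the tangent space of $\Omega_{h_0}$ at the period point; the derivative of $\wp^{(\gamma)}_{2n}$ is precisely this isomorphism, so $\wp^{(\gamma)}_{2n}$ is \'etale. For injectivity, suppose $(X,H)$ and $(X',H')$ are polarized fourfolds of $\KKK^{[2]}$-type of square $2n$ and divisibility $\gamma$ with the same period. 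Choosing markings suitably (using Eichler's criterion to normalize the image of the polarization to $h_0$ and then an element of $O(\L_{\KKK^{[2]}},h_0)$ to match the two periods), this produces a Hodge isometry $g\colon H^2(X,\Z)\isomto H^2(X',\Z)$ with $g(H)=H'$; since $g(H)=H'$ and $g(\sigma_X)\in\C\,\sigma_{X'}$, the isometry $g$ preserves the canonical orientation of the positive three-space of $H^2(X,\R)$, hence by Markman's description of the monodromy group (for $\KKK^{[2]}$-type it is the group of orientation-preserving isometries of $H^2(X,\Z)$) it is a parallel-transport operator. By Verbitsky's theorem in Markman's Hodge-theoretic form, a parallel-transport Hodge isometry taking an ample class to an ample class is induced by an isomorphism $X\isomto X'$; this isomorphism carries $H$ to $H'$, so $(X,H)\isom(X',H')$ as polarized manifolds, and $\wp^{(\gamma)}_{2n}$ is injective on $\C$-points.

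The genuinely deep inputs, which I would treat as black boxes, are Verbitsky's global Torelli theorem (proved through the geometry of the Teichm\"uller space, twistor lines, and the ergodic action of the mapping class group) and Markman's theorems identifying the monodromy group and determining when a parallel-transport Hodge isometry comes from an isomorphism. The latter rests in particular on the explicit description of the K\"ahler (equivalently, nef) cone of manifolds of $\KKK^{[2]}$-type --- the work of Markman, Mongardi, and Bayer--Hassett--Tschinkel that also underlies Theorem~\ref{thm:NefConeHK4} --- and this is the step where I expect the real content to lie; everything else above is formal. One should also note that automorphisms of manifolds of $\KKK^{[2]}$-type act faithfully on $H^2$, so that the quotient subtleties at points of $\cM^{(\gamma)}_{2n}$ carrying extra automorphisms do not affect \'etaleness at the level of coarse spaces.
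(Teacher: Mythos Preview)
The paper does not prove this theorem itself: it is stated as Verbitsky's result with references to \cite{ver} and \cite[Theorem~3.14]{ghssur}. Your sketch is correct and follows exactly the standard deduction found in those references---local Torelli for \'etaleness, and injectivity via Markman's Hodge-theoretic Torelli (\cite[Theorem~1.3]{marsur}), using that for $\KKK^{[2]}$-type the monodromy group is all of $O^+(\L_{\KKK^{[2]}})$.

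One inaccuracy worth flagging: you write that Markman's criterion---that a parallel-transport Hodge isometry sending an ample class to an ample class is induced by an isomorphism---``rests in particular on the explicit description of the K\"ahler cone'' due to Bayer--Hassett--Tschinkel and Mongardi. It does not. That criterion (\cite[Theorem~1.3(2)]{marsur}) is a formal consequence of Verbitsky's global Torelli on the marked moduli space together with the general chamber decomposition of the positive cone into K\"ahler-type chambers; no explicit knowledge of the walls is needed. The explicit nef-cone description (Theorem~\ref{thm:NefConeHK4}) is used later in the paper for a different purpose, namely to determine the \emph{image} of $\wp^{(\gamma)}_{2n}$ (Theorem~\ref{imper}), not to establish that $\wp^{(\gamma)}_{2n}$ is an open embedding. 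A minor addendum to your closing remark on coarse spaces: faithfulness of $\Aut(X,H)$ on $H^2$ gives only injectivity into the stabilizer of the period point in $O(\L_{\KKK^{[2]}},h_0)$; to match the local quotient structures on both sides you also need surjectivity onto that stabilizer, and this again follows from the same Verbitsky--Markman input applied with $X'=X$.
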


In particular, the commuting involutions of $\cP^{(\gamma)}_{2n}$ described in Proposition~\ref{invol} induce rational involutions on the moduli space $\cM_{2n}^{(\gamma)}$.\ 
In the case $n=\gamma=1$ (double EPW sextics; see Example~\ref{qqq1}), the unique non-trivial involution was described geometrically in~\cite{og2} in terms of projective duality.

\section{Special polarized hyperk\"ahler fourfolds}\label{sec22}

A hyperk\"ahler fourfold corresponding to a very general point of $\cM^{(\gamma)}_{2n}$ has Picard number~1.\ The {\em Noether--Lefschetz locus} (or special locus) is the subset of $\cM^{(\gamma)}_{2n}$ corresponding to hyperk\"ahler fourfolds with Picard number at least 2.\ It can be described as follows.

Let $K$ be a primitive, rank-2, signature-$(1,1)$ sublattice of $\L_{\KKK^{[2]}}$ containing the class $h_0$ chosen in Section~\ref{sec21}.\ The codimension-2 subspace $\P(K^\bot\otimes \C)$ in $\P(\L_{\KKK^{[2]}} \otimes \C)$ cuts out an irreducible hypersurface in $\Omega_{h_0}$ whose image in $\cP^{(\gamma)}_{2n}$ will be denoted by $\cD_{2n,K}^{(\gamma)}$ and called a {\em Heegner divisor}.\  
The Noether--Lefschetz locus  is then the inverse image in $\cM^{(\gamma)}_{2n}$ by the period map   $\wp^{(\gamma)}_{2n}$ of the countable union $\bigcup_K \cD_{2n,K}^{(\gamma)}$ of irreducible hypersurfaces.

For each   integer $d$, the union  
$$\cD_{2n,d}^{(\gamma)}:=\hskip-5mm\bigcup_{\disc(K^\bot)=-d}\hskip-5mm \cD_{2n,K}^{(\gamma)} \subset \cP_{2n}^{(\gamma)}
$$ 
of Heegner divisors is 
finite, hence it is either empty or of pure codimension 1.\ Following Hassett, we say that the polarized hyperk\"ahler fourfolds whose period point is in $\cD_{2n,d}^{(\gamma)} $ are  {\em special of discriminant $d$} (the lattice $K^\bot$ has signature $(2,19)$, hence  $d  $ is positive).\ 
We use the notation $\cC_{2n,d}^{(\gamma)}:=(\wp^{(\gamma)}_{2n})^{-1}(\cD_{2n,d}^{(\gamma)}) \subset \cM^{(\gamma)}_{2n}$.

We now describe the irreducible components of  the loci $\cD_{2n,d}^{(\gamma)}$ (the case $n=3$ and $\gamma=2$ was originally studied by Hassett in~\cite{has} and the case $n=\gamma=1$ in~\cite{dims}). 

\begin{prop}\label{propirr}
Let $n$ and $d$ be positive integers and let $\gamma\in\{1,2\}$.\ If the locus $\cD_{2n,d}^{(\gamma)}$ is non-empty, the integer $d$ is even; we set $e:=d/2$.

\noindent {\rm (1)\hskip 4.8mm  (a)} The locus $\cD_{2n,2e}^{(1)}$ is non-empty if and only if   either $e$ or $e-n$ is a  square modulo~$4n$.
 
 {\rm (b)}  If $n$ is {\em square-free} and $e$ is divisible by $n$ and   satisfies the conditions in {\rm (a)}, 
  the locus $\cD_{2n,2e}^{(1)}$ is irreducible, except when 
 \begin{itemize}
\item either $n\equiv 1\pmod4$ and $e\equiv  n\pmod{4n}$,
\item or   $n\equiv -1\pmod4$ and $e\equiv 0\pmod{4n}$,
\end{itemize}
  in which cases $\cD_{2n,2e}^{(1)}$ has two irreducible components. 

{\rm (c)} If  $n$ is {\em prime} and $e$ satisfies the conditions in {\rm (a)}, $\cD_{2n,2e}^{(1)}$ is  irreducible, except when $n\equiv 1\pmod4$ and $e\equiv  1  \pmod{4}$,  or when $n\equiv -1\pmod4$ and  $e\equiv  0  \pmod{4}$,  in which cases  $\cD_{2n,2e}^{(1)}$ has two irreducible components.

\noindent {\rm (2)} Assume moreover $ n\equiv -1\pmod4$.

{\rm (a)}   The locus $\cD_{2n,2e}^{(2)}$ is non-empty if and only if    $e$ is a  square modulo~$n$.

{\rm (b)} If $n$ is {\em square-free and $ n\mid e$,} the locus $\cD_{2n,2e}^{(2)}$ is  irreducible.

{\rm (c)} If $n$ is {\em prime} and $e$ satisfies the conditions in {\rm (a)}, $\cD_{2n,2e}^{(2)}$ is irreducible.
\end{prop}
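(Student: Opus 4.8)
The plan is to translate the geometric statement about irreducible components of $\cD_{2n,d}^{(\gamma)}$ into a purely lattice-theoretic one, and then apply Eichler's criterion. Concretely, a Heegner divisor $\cD_{2n,K}^{(\gamma)}$ corresponds to a primitive rank-$2$ sublattice $K \subset \L_{\KKK^{[2]}}$ of signature $(1,1)$ containing $h_0$; equivalently, choosing a generator $v$ of $K \cap h_0^\bot$ in $\L^{(\gamma)}_{\KKK^{[2]},2n}$, it corresponds to a primitive vector $v$ in the lattice $M:=\L^{(\gamma)}_{\KKK^{[2]},2n}$ with $\disc(K^\bot) = -\disc(v^\bot \cap M)$ up to the usual sign and factor conventions. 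The first step is therefore to set up this dictionary carefully: express $d = \disc(K^\bot)$ in terms of $q_M(v)$ and $\div_M(v)$, and show that the $\widetilde O(M)$-orbits of such $v$ are exactly the irreducible components of $\cD_{2n,d}^{(\gamma)}$ (using that $\widetilde O(M) = O(\L_{\KKK^{[2]}},h_0)$ acts on $\Omega_{h_0}$ and that the image of $\P(v^\bot \otimes \C)$ only depends on the $\widetilde O(M)$-orbit of $v$). Since $M$ contains $U^{\oplus 2}$ but, after splitting off a copy of $U$ to accommodate $v$, one still needs two orthogonal copies of $U$ left in $v^\bot$ to invoke Theorem~\ref{eic} directly for $v$ — here $M = U^{\oplus 2} \oplus E_8(-1)^{\oplus 2} \oplus (\text{rank-}2)$ does contain two copies of $U$, so Eichler applies to $M$ itself: the $\widetilde O(M)$-orbit of a primitive $v \in M$ is determined by $q_M(v)$ and $v_* \in D(M)$.

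The second step is the enumeration. Having reduced to counting $\widetilde O(M)$-orbits of primitive $v$ with a fixed value of the discriminant invariant $d$, I would parametrize these orbits by pairs $(q_M(v), v_*)$ subject to the compatibility constraint $\bar q(v_*) \equiv q_M(v) \pmod{2\Z}$ and to $d$ being determined by $(q_M(v),\div_M(v))$ where $\div_M(v)$ is read off from the order of $v_*$. For $\gamma = 1$, $D(M) = \Z/2\Z \times \Z/2n\Z$ with the stated quadratic form, and for $\gamma=2$, $D(M) = \Z/n\Z$ with $\bar q(1) = -\tfrac{2}{n}$; I would write $v = $ (something in the $U^{\oplus 2}\oplus E_8(-1)^{\oplus 2}$ part) plus a vector in the rank-$2$ summand, and compute how many classes $v_*$ of each admissible order and each admissible square arise, translating "is $v_*$ realizable with square $2e'$" into the congruence conditions "$e$ or $e-n$ a square mod $4n$" (resp. "$e$ a square mod $n$"). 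The non-emptiness criteria (1a) and (2a) are exactly these realizability statements. The square-free and prime hypotheses in (1b), (1c), (2b), (2c) enter through the Chinese Remainder decomposition of $D(M)$: when $n$ is square-free, $\Z/2n\Z \cong \Z/2\Z \times \Z/n\Z$ (or $\Z/n\Z \cong \prod \Z/p\Z$), the number of square roots of a given element is controlled prime-by-prime, and the divisibility-by-$n$ (resp. $n \mid e$) hypothesis pins down the behavior in the $\Z/n\Z$ part so that the only remaining ambiguity is in the $2$-part of the discriminant group — this produces "one component, except two components in the listed residue cases." The prime case (1c),(2c) is the cleanest instance of this count.

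The main obstacle I expect is the bookkeeping around divisibility and the distinction between $\widetilde O(M)$-orbits of primitive vectors $v \in M$ versus the actual irreducible components of the Heegner divisor: two primitive vectors $v, v'$ can span the same $K \cap h_0^\bot$ only up to sign, and more subtly, the passage from "primitive sublattice $K$ with $\disc(K^\bot) = -d$" to "primitive vector $v \in M$" requires checking that $K = \langle h_0, v\rangle^{\mathrm{sat}}$ versus $K = \langle h_0, v \rangle$ doesn't change the discriminant computation — i.e. understanding when $\langle h_0, v\rangle$ is already saturated, which is governed by $\div_M(v)$ and the glue between the $\Z/2\Z$ (for $\gamma=1$) coming from $h_0^\bot \subset \L_{\KKK^{[2]}}$ and the class $v_*$. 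I would handle this by a clean lemma: $\disc(K^\bot) = \disc(v^\bot_M)/\!\!\gcd$-type correction, computed via $\disc(v^\bot_M) \cdot \disc(\langle v\rangle) = \disc(M) \cdot [\text{glue}]^2$ and the analogous formula relating $\disc(K)$, $\disc(K^\bot)$, $\disc(\L_{\KKK^{[2]}}) = 2$. Once this numerical dictionary $d \leftrightarrow (q_M(v), \div_M(v), v_*)$ is nailed down, everything else is the CRT-based counting sketched above, and the proposition follows by listing cases. The remaining routine but lengthy part is simply verifying, in each of the four regimes, that the count of admissible classes $v_*$ matches the claimed number of components.
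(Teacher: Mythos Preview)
Your proposal is correct and follows essentially the same approach as the paper: reduce to primitive vectors $\kappa$ in $h_0^\bot$, invoke Eichler's criterion so that $\widetilde O(h_0^\bot)$-orbits are labeled by $(q(\kappa),\kappa_*)$ up to sign, and then enumerate which pairs occur for a given discriminant $d$. The paper implements this with explicit coordinates (writing $\kappa=a(u-nv)+b\ell+cw$ for $\gamma=1$, and a similar explicit basis for $\gamma=2$) and the formula $d=\bigl|\kappa^2\disc(h_0^\bot)/\div_{h_0^\bot}(\kappa)^2\bigr|$ from \cite[Lemma~7.5]{ghssur}, which is precisely the ``clean lemma'' you anticipate for the dictionary $d\leftrightarrow(q(\kappa),\div(\kappa))$; your CRT framing of the enumeration is the same case analysis the paper carries out by hand on the divisibility $s=\div_{h_0^\bot}(\kappa)\in\{1,2,n,2n\}$.
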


\begin{rema}\label{exc}
In cases (1)(b) and (1)(c), when the hypersurface $\cD_{2n,2e}^{(1)}$ is reducible, its two components are exchanged by one of the involutions of the period space  described in Proposition~\ref{invol}  when $n\equiv 1\pmod4$, but not when $n\equiv -1\pmod4$ (in that case, these involutions are in fact trivial when $n$ is prime).
\end{rema}

\begin{proof}[Proof of Proposition \ref{propirr}]
\noindent{\bf Case $\gamma=1$.}\ 
 Let  $(u,v)$ be a standard basis for a hyperbolic plane $U$ contained in $\L_{\KKK^{[2]}}$ and let~$\ell$ be a basis for the $I_1(-2)$ factor.\ We may take  
$h_0:=u+nv
$ (it has the correct square and divisibility), in which case  $h_0^\bot=\Z( u-nv)\oplus\Z\ell\oplus M$, where  $M:=\{u,v,\ell\}^\bot=U^{\oplus 2}\oplus E_8(-1)^{\oplus 2}$ is unimodular.\ The   discriminant group  $D(h_0^\bot)\isom \Z/2\Z\times \Z/2n\Z$ is generated by $\ell_*=\ell/2$ and $(u-nv)_*=(u-nv)/2n$, with $\bar q(\ell_*)=-1/2$ and  $\bar q((u-nv)_*)=-1/2n$.

Let $\kappa$ be a generator of $K\cap h_0^\perp$.\ We write
$$\kappa=a(u-nv)+b\ell +cw,
$$
where $w\in M$ is primitive.\ Since $K$ has signature $(1,1)$, we have $\kappa^2<0$ and  the formula   from~\cite[Lemma~7.5]{ghssur}    reads
\begin{equation}\label{eqd}
d=\left|\disc(K^\bot)\right|=\left|\frac{\kappa^2\disc(h_0^\bot)}{s^2}\right|  = \frac{8n(na^2+b^2+mc^2)}{s^2}\equiv \frac{8n(na^2+b^2)}{s^2}\pmod{8n},
\end{equation}
where $m:=-\frac12 w^2$ and $s: =\gcd(2na,2b,c) $ is the divisibility of $\kappa$ in $ h_0^\bot$.\ If $s\mid b$, we obtain $d\equiv 2 \bigl( \frac{2na}{s}\bigr)^2\pmod{8n}$, which is the first case of (1)(a): $d$ is even and $e:=d/2$ is a square modulo $4n$.\ Assume  $s\nmid b$ and, for any non-zero integer $x$, write $x=2^{v_2(x)}x_{\textnormal{odd}}$, where $x_{\textnormal{odd}}$ is odd.\ One has then
$\nu_2(s)=\nu_2(b)+1
$ and
$$
d\equiv 2 \Bigl( \frac{2na}{s}\Bigr)^2+2n\Bigl( \frac{b_{\textnormal{odd}}}{s_{\textnormal{odd}}}\Bigr)^2 \equiv 2 \Bigl( \frac{2na}{s}\Bigr)^2+2n   \pmod{8n},
$$
which is the second case of (1)(a): $d$ is even and $d/2 -n$ is a square modulo $4n$.\ It is then easy, taking suitable integers $a$, $b$,~$c$, and vector $w$, to construct examples that show that these necessary conditions on $d$ are also sufficient, thereby proving (1)(a).

We now prove (1)(b) and (1)(c).\ 
 
Given a lattice $K$ containing $h_0$ with $\disc(K^\perp)=-2e$, we let as above $\kappa$ be a generator of $K\cap h_0^\perp$.\  By Eichler's criterion (Theorem~\ref{eic}),   the group $\widetilde O(h_0^\bot)$ acts transitively on the set of primitive vectors $\kappa\in h_0^\bot$ of given square and fixed $\kappa_*\in D(h_0^\bot )$.\ Since $\kappa$ and $-\kappa$ give rise to the same lattice $K$ (obtained as the saturation of $\Z h_0\oplus \Z \kappa$), the locus $ \cD^{(1)}_{2n,2e} $ will be irreducible (when non-empty) if we show that
  the integer $e$ determines $\kappa^2$, and $\kappa_*$ up to sign.
 
We write as above $\kappa=a(u-nv)+b\ell +cw\in h_0^\bot$, with $\gcd(a,b,c)=1$ and    $s =\div_{h_0^\bot}(\kappa)=\gcd(2na,2b,c)$.\ From \eqref{eqd}, we get
\begin{equation}\label{kap}
\kappa^2=-es^2/2n=-2(na^2+b^2+mc^2)\quad\textnormal{and}\quad
\kappa_*= (2na/s,2b/s)\in \Z/2n\Z\times \Z/2\Z.
\end{equation}

If  $s=1$, we have $e \equiv 0\pmod{4n}$ and $\kappa_*=0$.
 
 If $s=2$, the integer $c$ is even and $a$ and $b$ cannot be both even (because $\kappa$ is primitive).\ We have $e=n(na^2+b^2+mc^2)$ and
 $$ \begin{cases} 
 e \equiv  n^2 \pmod{4n}\textnormal{\quad and\ }\kappa_*=(n,0) &\textnormal{if $b$ is even (and $a$ is odd);}\\
  e \equiv  n \pmod{4n}\textnormal{\quad and\ }\kappa_*=(0,1) &\textnormal{if $b$ is odd and $a$ is even;}\\
   e \equiv  n(n+1) \pmod{4n}\textnormal{\quad and\ }\kappa_*=(n,1) &\textnormal{if $b$ and $a$ are odd.}
 \end{cases}
 $$ 
 
 Assume now that $n$ is square-free and $ n\mid e$.\  From \eqref{eqd}, we get $n\mid \bigl( \frac{2na}{s}\bigr)^2$, hence $s^2\mid 4na^2$, and 
 $s\mid 2a$ because $n$ is square-free.\ This implies $s=\gcd(2a,2b,c)\in\{1,2\}$.\ 
 
 When $n$ is even (\ie, $n\equiv 2\pmod4$),  we see from the discussion above that  both $s$ (hence also $\kappa^2$) and $\kappa_*$ are determined by $e$, so the corresponding hypersurfaces $ \cD^{(1)}_{2n,2e} $ are  irreducible.\ 
 
 If $n$ is odd, there are coincidences: 
 \begin{itemize}
 \item when $n\equiv 1\pmod4$, we have $ n\equiv  n^2 \pmod{4n}$, hence $ \cD^{(1)}_{2n,2e} $ is  irreducible when $e\equiv 0$ or $2n\pmod{4n}$,   has two irreducible components (corresponding to $\kappa_*=(n,0)$ and $\kappa_*=(0,1)$) when $e\equiv  n\pmod{4n}$, and is empty otherwise; 
 \item when $n\equiv -1\pmod4$, we have $ n(n+1)\equiv 0 \pmod{4n}$, hence $ \cD^{(1)}_{2n,2e} $ is  irreducible when $e\equiv  -n$ or $ n\pmod{4n}$,   has two irreducible components (corresponding to $\kappa_*=0$ and $\kappa_*=(n,1)$) when $e\equiv 0\pmod{4n}$, and is empty otherwise.\ 
 \end{itemize}
 This proves (1)(b).
 
  We now assume  that $n$ is prime 
and
prove (1)(c).\ Since $s\mid 2n$, we have $s\in\{1,2,n,2n\}$; the cases $s=1$ and $s=2$ were explained above.\ 
 If $s=n$ (and $n$ is odd), we have $n\mid b$, $n\mid c$, $n\nmid a$, and
  $$  
 e \equiv 4a^2 \pmod{4n}\textnormal{\quad and\quad }\kappa_*=(2a,0). $$ 
  If $s=2n$, the integer $c$ is even, $a$ and $b$ cannot be both even, $n\mid b$, and $n\nmid a$.\ We have
$$ \begin{cases} 
 e \equiv  a^2 \pmod{4n}\textnormal{\quad and\ }\kappa_*=(a,0) &\textnormal{if $2n\mid b$  (hence $a$ is odd);}\\
 e \equiv  a^2+ n \pmod{4n}\textnormal{\quad and\ }\kappa_*=(a,1) &\textnormal{if $b$ is odd (and $n$ is odd);}\\
 e \equiv  a^2+2 \pmod{8}\textnormal{\quad and\ }\kappa_*=(a,1) &\textnormal{if $4\nmid b$ is odd and $n=2$.}
 \end{cases}
 $$  
 
 When $n=2$, one checks that the class of $e$ modulo $8$ (which is in $\{0, 1,2,3, 4,6\}$) completely determines $s$, and $\kappa_*$ up to sign.\ The corresponding divisors $ \cD^{(1)}_{4,2e} $ are therefore all irreducible.
 
When $n\equiv 1\pmod4$, we have $ n\equiv  n^2 \pmod{4n}$ and $ a^2 \equiv  (n-a)^2+ n\pmod{4n}$ when $a$ is odd (in which case $ a^2 \equiv 1\pmod{4}$).\ When  $n\equiv -1\pmod4$, we have $ n(n+1)\equiv 0 \pmod{4n}$ and 
 $ a^2 \equiv  (n-a)^2+ n\pmod{4n}$ when $a$ is even (in which case $ a^2 \equiv 0\pmod{4}$).\ Together with changing $a$ into $-a$ (which does not change the lattice $K$), these are the only coincidences: the corresponding hypersurfaces  $ \cD^{(1)}_{2n,2e} $  therefore have two components and the others are irreducible.\ This proves (1)(c).

\medskip
{\bf Case $\gamma=2$}  (hence $n\equiv -1\pmod4$).\ We may take 
 $h_0:=2\bigl(u+\frac{n+1}{4}v\bigr)+\ell
$, in which case $h_0^\bot=\Z w_1\oplus \Z w_2\oplus M$, with    $w_1:= v+\ell$ and $w_2:=-u+\frac{n+1}{4}v$.\ The matrix of the intersection form on $\Z w_1\oplus \Z w_2$ is    $\left(\begin{smallmatrix}-2&-1\\-1&-\tfrac{n+1}{2}\end{smallmatrix}\right)$ as in \eqref{latns} and the discriminant group  $D(h_0^\bot)\isom \Z/n\Z$ is generated by $(w_1-2w_2)_*=(w_1-2w_2)/n$, with $\bar q((w_1-2w_2)_*)=-2/n$.

Let $(h_0,\kappa')$ be a basis for $K$, so that $\disc(K)=2n\kappa^{\prime 2}-(h_0\cdot \kappa')^2$.\ Since  $\div(h_0)=\gamma=2$, the integer $ h_0\cdot \kappa'$ is even and since $\kappa^{\prime 2}$ is also even (because $\L_{\KKK^{[2]}}$ is an even lattice), we have   $4\mid \disc(K) $ and $-\disc(K)/4$ is a square modulo $n$.\ Since the discriminant of $\L_{\KKK^{[2]}}$ is~2,  the integer $d=|\disc(K^\bot)|$ is either $2\,|\disc(K)|$ or $\tfrac12\,|\disc(K)|$, hence it is even and $e=d/2$ is a square modulo $n$, as desired.

Conversely, it is   easy to construct examples that show that these necessary conditions on $d$ are also sufficient.\ This proves (2)(a).
 
We now prove (2)(b) and (2)(c).\ To prove that the loci $ \cD^{(2)}_{2n,2e} $ are irreducible (when non-empty), we need to show that $ e$ determines $\kappa^2$, and $\kappa_*$ up to sign (where $\kappa$ is a generator of $K\cap h_0^\bot$).

With the notation above, we have $\kappa= ((h_0\cdot \kappa')h_0-2n \kappa')/t$, where $t:=\gcd(h_0\cdot \kappa',2n)$ is even and $\kappa^2=\frac{2n}{t^2}\disc(K)$.\ 
Formula~\eqref{eqd} then gives
\begin{equation*}
2e=\left|\disc(K^\bot)\right|=\left|\frac{\kappa^2\disc(h_0^\bot)}{\div_{h_0^\bot}(\kappa)^2}\right|  = \left|\frac{2n^2\disc(K)}{t^2\div_{h_0^\bot}(\kappa)^2 }\right|.
\end{equation*}
Since $n$ is odd and $t$ is even, and, as we saw above, $\disc(K)\in\{-e,-4e\}$, the only possibility is $\disc(K)=-4e$ and $t\div_{h_0^\bot}(\kappa) =2n$.\ 

Assume that $n$ is square-free and $ n\mid e$.\ Since $-4e=\disc(K)=2n\kappa^{\prime 2}-(h_0\cdot \kappa')^2$, we get $2n\mid (h_0\cdot \kappa')^2$ hence, since $n$ is square-free and odd, $2n\mid h_0\cdot \kappa'$.\ This implies $t=2n$ and $\div_{h_0^\bot}(\kappa)=1$; in particular,   $\kappa_*=0$ and $\kappa^2=-2e/n$ are uniquely determined.\ This proves (2)(b).

We now assume that $n$ is prime.\
 Since $t\div_{h_0^\bot}(\kappa) =2n$ and $t$ is even, 
 \begin{itemize}
 \item either 
$(t,\div_{h_0^\bot}(\kappa),\kappa^2)=(2n,1,-2e/n)$ and  $n\mid  e$; 
\item or
$(t,\div_{h_0^\bot}(\kappa),\kappa^2)=(2,n,-2ne)$ and $n\nmid e$ (because $n\nmid h_0\cdot \kappa'$ and $d=-\frac12 \disc(K)\equiv \frac12(h_0\cdot \kappa')^2\pmod{n}$).
 \end{itemize}
Given $e= a^2+ nn'$, the integer $\kappa^2$ is therefore uniquely determined by $e $: 
\begin{itemize}
\item either $  n\mid a$,  $\kappa^2= -2e/n$, and $\kappa_*=0$;
\item or $n\nmid a$, $\kappa^2= -2ne$, $\kappa_*=\kappa/n$, and $\bar q(\kappa_*)= -2a^2/n\pmod{2\Z}$.
\end{itemize}
In the second case, $\kappa_*=\pm a(w_1-2w_2)_* $; it follows that in all cases, $\kappa_*$ is also uniquely defined, up to sign, by $e$.\ This proves (2)(c).
\end{proof}

\section{The nef cone of a projective hyperk\"ahler fourfold of $\KKK^{[2]}$-type}\label{sect3}

Cones of divisors on projective hyperk\"ahler manifolds of $\KKK^{[n]}$-type were described in~\cite{bht,bama,marsur,mon2}.\
When $n=2$, these results take a very special form.

Let $X$ be a projective hyperk\"ahler fourfold of $\KKK^{[2]}$-type.\ The \emph{positive cone} 
$$\Pos(X)\subset \Pic(X)\otimes\R$$ is the connected component of the open subset $\{ x\in\Pic(X)\otimes\R\mid x^2>0\}$ containing the class of an ample divisor.\ The \emph{movable cone} $$\Mov(X)\subset \Pic(X)\otimes\R$$ is the    (not necessarily open nor closed) convex cone generated by classes of movable  divisors (i.e., those divisors whose  base locus has codimension at least $2$).\  We have  inclusions $\Int(\Mov(X))\subset\Pos(X)$ of the interior of the movable cone into the positive cone, and $\Amp(X)\subset\Mov(X)$ of the ample cone into the movable cone.

We set
\begin{eqnarray*}  
\cD iv_X&:=& \{a\in\Pic(X)\mid a^2=-2\}, \\
\cFlop_X&:=& \{a\in\Pic(X)\mid a^2=-10 ,\  \div_{H^2(X,\Z)}(a)=2\}.
\end{eqnarray*}
Given a divisor class $a\in\Pic(X)\otimes\R$, we denote by $H_a$ the hyperplane
\[
H_a := \{x\in \Pic(X)\otimes\R\mid  x\cdot a=0 \}.
\]

\begin{theo}\label{thm:NefConeHK4}
Let $X$ be a hyperk\"ahler fourfold of $\KKK^{[2]}$-type.

\noindent {\rm (a)} The interior  $\Int(\Mov(X))$ of the movable cone is the connected component of
\[
\Pos(X) \smallsetminus \bigcup_{a\in \cD iv_X} H_a
\]
that contains the class of an ample divisor.

\noindent {\rm (b)} The ample cone $\Amp(X)$ is the connected component of
\[
\Int(\Mov(X)) \smallsetminus \bigcup_{a\in \cFlop_X} H_a
\]
that contains the class of an ample divisor.
\end{theo}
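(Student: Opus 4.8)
The plan is to deduce Theorem~\ref{thm:NefConeHK4} from the general description of the nef and movable cones of projective hyperk\"ahler manifolds of $\KKK^{[n]}$-type established by Markman, Bayer--Macr\`i, Bayer--Hassett--Tschinkel, and Mongardi, specializing to $n=2$.\ Recall that for a general $\KKK^{[n]}$-type manifold $X$ one has $H^2(X,\Z)\isom \L_{\KKK^{[2]}}$ in our case, and there is an extended (Mukai) lattice $\widetilde\L$ into which $H^2(X,\Z)$ embeds with orthogonal complement spanned by a primitive vector $v$ with $v^2=2n-2=2$.\ The general result (see \cite[Theorem~1.3, Proposition~6.1]{bht} and \cite{mon2}) says: the movable cone $\Mov(X)$ is cut out inside $\Pos(X)$ by the hyperplanes $H_a$ orthogonal to classes $a\in\Pic(X)$ whose associated ``wall'' is a divisorial contraction or a flop, and these classes are governed by the existence of a primitive vector $w$ in the relevant rank-two sublattice of $\widetilde\L$ with $w^2=-2$ (flopping/divisorial of one type) or $w^2=-2$ together with a prescribed pairing with $v$, etc.\ The point for $n=2$ is that this numerical bestiary collapses to exactly two cases, which is precisely why $\cD iv_X$ and $\cFlop_X$ are defined with the numbers $-2$ and $-10$.

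First I would set up, following \cite{bama} and \cite{bht}, the correspondence between walls in $\Pos(X)$ and primitive rank-two sublattices $\langle v,w\rangle\subset\widetilde\L$ of signature $(1,1)$ containing the Mukai vector $v$ (with $v^2=2$), where $a\in\Pic(X)$ is (a multiple of) the component of $w$ orthogonal to $v$.\ A wall is then classified by the pair $(w^2, v\cdot w)$, or equivalently by the discriminant of $\langle v,w\rangle$ and the divisibility of $a$ in $H^2(X,\Z)$.\ I would then run through the (short) list of wall types from \cite[Theorem~5.7]{bama} and \cite[Proposition~6.1]{bht}: the walls bounding $\Mov(X)$ inside $\Pos(X)$ correspond to $a^2=-2$ (these are the Brill--Noether/divisorial walls giving the ``Hilbert--Chow'' type contractions and the movable cone's boundary), while the remaining walls inside $\Mov(X)$ that bound $\Amp(X)$ are the flopping walls; a direct computation with $v^2=2$ shows the flopping walls are exactly those where $a$ satisfies $a^2=-2$ with $\div(a)=1$ —already excluded— or $a^2=-10$ with $\div(a)=2$.\ Concretely: a rank-two lattice $\langle v,w\rangle$ of signature $(1,1)$ containing $v$ with $v^2=2$ and giving a flop corresponds, by Mongardi's and Bayer--Hassett--Tschinkel's analysis, either to the presence of a $(-2)$-class or to a specific ``$10$'' configuration forced by the factor $I_1(-2)$ in $\L_{\KKK^{[2]}}$ and the divisibility-$2$ condition.

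The argument for part (a) is then: any movable divisor class lies in $\Pos(X)$ (since movable implies $x^2\ge 0$, and in fact $>0$ for a big movable class) and cannot lie on any $H_a$ with $a^2=-2$, because such an $a$ or $-a$ would be effective (Riemann--Roch on the hyperk\"ahler fourfold: $\chi(a)>0$ forces $\pm a$ effective when $a^2=-2$) and would be a fixed component obstructing movability; conversely, Markman's and Bayer--Macr\`i's monodromy/wall-crossing results show that the chamber of $\Pos(X)\smallsetminus\bigcup_{a^2=-2}H_a$ containing an ample class is entirely filled by movable classes (one produces, for each such class, a birational model on which it becomes ample, using that the only divisorial contractions in $\KKK^{[2]}$-type are the $(-2)$-ones).\ For part (b), inside $\Int(\Mov(X))$ the remaining walls are the flopping walls; by Mongardi's classification \cite{mon2} of extremal rays of birational-but-not-biregular type in $\KKK^{[2]}$-type, these are precisely the $H_a$ with $a\in\cFlop_X$, and every chamber of $\Int(\Mov(X))\smallsetminus\bigcup_{a\in\cFlop_X}H_a$ is the ample cone of a hyperk\"ahler birational model of $X$, the one containing the ample class being $\Amp(X)$ itself.

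The main obstacle, and the only real content beyond citation, is the numerical reduction: verifying that for $n=2$ the general wall-classification of \cite{bama,bht,mon2} yields \emph{exactly} the two families $a^2=-2$ (all divisibilities) and $a^2=-10,\ \div=2$, with no other flopping or divisorial walls, and in particular that there is no analogue of the ``$\div=2$, $a^2=-2$'' divisorial wall or other sporadic walls that appear for larger $n$.\ This requires carefully enumerating the primitive rank-two overlattices of $\langle v\rangle\oplus\langle a\rangle$ inside the Mukai lattice subject to the constraint $v^2=2$, checking which give rise to effective spherical or isotropic Mukai vectors on the K3 side (hence to actual contractions), and matching the divisibility of $a$ in $\L_{\KKK^{[2]}}$ — which is where the $I_1(-2)$ summand and the value $-10=-2\cdot 5$ enter.\ Once this finite check is done, parts (a) and (b) follow formally from the cited structural theorems.
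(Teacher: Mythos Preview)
Your outline is on the right track and matches the paper's strategy: part~(a) is Markman's \cite[Lemma~6.22]{marsur} verbatim, and part~(b) is deduced from the Bayer--Hassett--Tschinkel wall description in the Mukai lattice.\ However, you are making the numerical reduction in~(b) harder than it needs to be.\ The paper does not enumerate rank-two overlattices or check effectivity of Mukai vectors; it writes down a one-line bijection.\ With $v_X\in\widetilde{\Lambda}_X$ the square-$2$ generator of $H^2(X,\Z)^\perp$, the result of \cite[Theorem~1]{bht} (reduced via \cite[Sections~12--13]{bama} to the extremal case $\tilde a^2=-2$) gives the flopping walls as the $H_{\tilde a}$ for algebraic classes $\tilde a\in\widetilde{\Lambda}_X$ with $\tilde a^2=-2$ and $\tilde a\cdot v_X=1$.\ The correspondence is then simply $\tilde a\mapsto a:=2\tilde a-v_X$: one checks immediately that $a\in\Pic(X)$, $a^2=4\tilde a^2-4(\tilde a\cdot v_X)+v_X^2=-10$, $\div_{H^2(X,\Z)}(a)=2$ (since $a\equiv v_X$ modulo $2\widetilde{\Lambda}_X$), and $H_a=H_{\tilde a}$.\ Conversely, for $a\in\cFlop_X$, the class $a+v_X$ is divisible by $2$ in $\widetilde{\Lambda}_X$ precisely because $\div(a)=2$, and $\tilde a:=(a+v_X)/2$ lands in the BHT set.\ This two-line computation replaces your proposed ``finite check'' entirely and explains exactly where the number $-10$ and the divisibility condition come from.

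Two further comments on your sketch.\ First, the Riemann--Roch argument you give for~(a) proves only one containment (a big movable class cannot be orthogonal to an effective $(-2)$-class); the converse---that the entire chamber consists of movable classes---genuinely needs Markman's results on prime exceptional divisors and the $W_{\Exc}$-action, and the paper simply cites \cite[Lemma~6.22]{marsur} for the full statement rather than reproving it.\ Second, your appeal to ``effective spherical or isotropic Mukai vectors on the K3 side'' is misplaced: a general projective hyperk\"ahler fourfold of $\KKK^{[2]}$-type is not a moduli space on any K3 surface, so there is no K3 side to work on.\ The input from \cite{bht} and \cite{mon2} is formulated purely lattice-theoretically via the Markman embedding $H^2(X,\Z)\hookrightarrow\widetilde{\Lambda}_X$, and that is all the paper uses.
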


\begin{proof}
Statement (a)  follows from the general result~\cite[Lemma~6.22]{marsur}.\ We sketch instead the proof of (b).

There is an extension $H^2(X,\Z)\subset\widetilde{\Lambda}_X$ of lattices and weight-2 Hodge structures, where the lattice $\widetilde{\Lambda}_X$  is isomorphic to the lattice $U^{\oplus 4}\oplus E_8(-1)^{\oplus 2}$ and the orthogonal $H^2(X,\Z)^\perp \subset \widetilde{\Lambda}_X$ is generated by a primitive vector $v_X$ of square $2$ (\cite[Section 9]{marsur},~\cite[Section~1]{bht}).\ 
 We denote by $\widetilde{\Lambda}_{\textnormal{alg},X}$ the algebraic   (\ie, $(1,1)$-type) part of $\widetilde{\Lambda}_X $, so that $\Pic(X)=v_X^\perp\cap \widetilde{\Lambda}_{\textnormal{alg},X}$.\  Finally, we set
\[
\cFlop_X':= \{\tilde{a}\in \widetilde{\Lambda}_{\textnormal{alg},X}\mid \tilde{a}^2=-2,\  \tilde{a}\cdot v_X=1\}.
\] 
The dual statement   to~\cite[Theorem 1]{bht} is then the following: the ample cone $\Amp(X)$ is the connected component of
\[
\Int(\Mov(X)) \smallsetminus \bigcup_{\tilde{a}\in \cFlop_X'} H_{\tilde{a}}
\]
containing the class of an ample divisor, where the hyperplane $H_{\tilde{a}}$ is defined   as before by $H_{\tilde{a}}:=\{ x\in \Pic(X)\otimes\R \mid x\cdot \tilde{a}=0 \}$.
We notice that the actual statement of~\cite[Theorem 1]{bht} says that we need to exclude the hyperplanes $H_a$, where $a^2\ge -2$ and $|a\cdot v_X|\le 1$.\ 
We may in fact only consider classes with $a^2= -2$, as explained in \cite[Sections 12 and 13]{bama}.

Given a class $\tilde{a} \in \cFlop_X'$, we let $a := 2\tilde{a}-v_X$.\ 
Then $a\in \cFlop_X$ and $H_a=H_{\tilde{a}}$.\ 
Conversely, given $a\in \cFlop_X$, we let $\tilde{b}:=a+v_X\in \widetilde{\Lambda}_{{\textnormal{alg}}}$.\ 
Since $\div_{H^2(X,\Z)}(a)=2$, we have $\tilde{b}=2\tilde{a}$, and $\tilde{a}\in\cFlop_X'$ with $H_{\tilde{a}}=H_a$. This proves (b).
\end{proof}

\begin{rema}\label{rmk:NefConeHK4}
We can make the description in  Theorem~\ref{thm:NefConeHK4} more precise.

\noindent (a) As explained in~\cite[Section 6]{marsur}, it follows from~\cite{mar3} that there is a group of reflections $W_{\Exc}$ acting on $\Pos(X)$.\  Using the Zariski decomposition (\cite{bou}), one shows (\cite[Lemma~6.22]{marsur}) that $W_{\Exc}$ acts faithfully and transitively on the set of connected components of 
\[
\Pos(X) \smallsetminus \bigcup_{a\in \cD iv_X} H_a.
\]
In particular,   $\overline{\Mov(X)} \cap \Pos(X)$ is a fundamental domain for the action of $W_{\Exc}$ on $\Pos(X)$.

\noindent (b) By~\cite[Proposition 2.1]{mat} (see also~\cite[Theorem 7]{hast2}), each connected component of
\[
\Int(\Mov(X)) \smallsetminus \bigcup_{a\in \cFlop_X} H_a
\]
corresponds to the ample cone of a hyperk\"ahler fourfold $X'$ of $\KKK^{[2]}$-type  via   a
  birational map $X\dashrightarrow X'$ which is a composition of Mukai flops with respect to numerically equivalent Lagrangian planes (\cite[Theorem 1.1]{ww}).

\noindent (c) By~\cite[Proposition~4]{add} (generalized to the twisted case in~\cite[proof of Proposition~4.1]{huy}) and~\cite{bama1,bama}, if $\cD iv_X\neq\vide$, the fourfold $X$ is isomorphic to a moduli space $M$ of stable sheaves on a  possibly twisted  K3 surface $(S,\alpha)$.\ The moduli space $M$ is birational to the Hilbert square of a K3 surface if there exists $a\in\cD  iv_X$ with $\div_{H^2(X,\Z)}(a)=2$; otherwise, $M$ is birational to a moduli space of  possibly twisted  rank-2 torsion-free sheaves.

\noindent (d) Similarly, if there exists a  class $w\in \Pic(X)$ with $w^2=0$, the fourfold $X$ is birational to a moduli space $M$ of torsion sheaves on a (possibly twisted) K3 surface $(S,\alpha)$.\
If the divisor class $w$ is also nef and primitive,   $X$ is actually isomorphic to such an $M$  and the Beauville integral system $f\colon X\isomto M\to \P^2$ is a Lagrangian fibration on $X$ such that $w=[f^*\cO_{\P^2}(1)]$.
\end{rema}

Before discussing a few examples of Theorem~\ref{thm:NefConeHK4} when $\Pic(X)$ has rank 2, we briefly review  Pell-type equations (see \cite[Chapter~VI]{nage}).\ Given non-zero integers $e$ and $t$ with $e>0$, we denote by $\cP_e(t)$ the   equation
 \begin{equation}\label{pet}
a^2-eb^2=t,
\end{equation}
where $a$ and $b$ are integers.\ A solution $(a,b)$ of this equation is called positive if $a>0$ and $b>0$.\  If $e$ is not a perfect square, $(a,b)$ is a solution if and only if the norm   $N(a+b\sqrt{e})$ in the quadratic number field $\Q(\sqrt{e})$ is $t$.\  The positive solution with minimal $a$ is called   the minimal solution; it is also the positive solution $(a,b)$ for which the ratio $a/b $ is minimal when $t<0$, maximal when $t>0$.\ 

Assume that $e$ is not a perfect square.\  There is always a minimal solution $(a_1,b_1)$ to the Pell equation $\cP_e(1)$  and if $x_1:=a_1+b_1\sqrt{e}$, all the   solutions of the equation $\cP_e(1)$  correspond to the ``$m$th powers'' $\pm x_1^m$  in $ \Z[\sqrt{e}]$, for $m\in \Z$. 

\begin{exam}[{\cite[Proposition 13.1 and Lemma 13.3]{bama}}\label{exa53}\footnote{Parts of the results of this example were first proved in~\cite[Theorem~22]{hast2} and the rationality of the nef cone was also proved, by very different methods, in~\cite[Corollary 5.2]{ogu}.}]\label{ex:NefConesHilbertSchemes}
Let $(S,L)$ be a  polarized K3 surface such that $\Pic(S)=\Z L$ and $L^2=: 2e$.\ Then
$\Pic( S^{[2]})=\Z L_2\oplus\Z\delta$, where   $L_2$ is the   class on $ S^{[2]}$ induced by $L$ and  
 $2\delta$ is  the class of the divisor in $  S^{[2]} $ that parametrizes non-reduced length-2 subschemes of $S$ (\cite[Remarque, p.~768]{beaa}).\   In the lattice $(H^2(S^{[2]},\Z),q_{S^{[2]}})$, we have the following products
$$L_2^2= 2e\ ,\quad \delta^2=-2\ ,\quad L_2\cdot\delta=0
.$$
 Cones of divisors on $S^{[2]}$ can be described as follows.
\begin{itemize}
\item[(a)] The extremal rays of the (closed) movable cone $\Mov(S^{[2]})$ are spanned by $L_2$ and $L_2-\mu_e\delta$, where
\begin{itemize}
\item[$\bullet$] if $e$ is a perfect square, $\mu_e=\sqrt{e}$;
\item[$\bullet$] if $e$ is not a perfect square and $(a_1,b_1)$ is the minimal solution of the equation $\cP_e(1)$, $\mu_e=e\frac{b_1}{a_1}$.
\end{itemize}
\item[(b)] The extremal rays of the nef cone $\Nef(S^{[2]})$ are spanned by $L_2$ and $L_2-\nu_e\delta$, where
\begin{itemize}
\item[$\bullet$] if the equation $\cP_{4e}(5)$ is not solvable, $\nu_e=\mu_e$;
\item[$\bullet$] if the equation $\cP_{4e}(5)$ is solvable and 
  $(a_5,b_5)$ is its minimal solution, $\nu_e =  2e\frac{b_5}{a_5}$.\footnote{\label{newf}There is a typo in~\cite[Lemma 13.3(b)]{bama}: one should replace $d$ with $2d$.}
\end{itemize}
\end{itemize}
\end{exam} 

\begin{exam}\label{ex:NefConesFano}  
Let $n$ be a positive   integer such that $n\equiv -1\pmod4$.\ Let $(X,H)$ be a polarized hyperk\"ahler fourfold of $\KKK^{[2]}$-type with $H$ of divisibility 2 and   $\Pic(X)=\Z H\oplus \Z L$, with intersection matrix   $\left(\begin{smallmatrix}2n&0\\0&-2e'\end{smallmatrix}\right)$.\ Since any two embeddings of the lattice $K=I_1(2n)\oplus I_1(-2e')$ into $\L_{\KKK^{[2]}}$ for which the image of a generator of $I_1(2n)$ has divisibility 2
differ by an isometry of $\L_{\KKK^{[2]}}$,\footnote{In the notation of the second part of the proof of Proposition~\ref{propirr} (case $\gamma=2$), a generator of $I_1(2n)$ can be sent to the class $h_0$; a generator of $I_1(-2e')$ is then sent  to the class $\kappa'=\kappa$.\ We have $t:=\gcd(h_0\cdot \kappa',2n)=2n$ and the formula $t\div_{h_0^\bot}(\kappa) =2n$  implies $ \div_{h_0^\bot}(\kappa) =1$, \ie, $\kappa_*=0$ in $D(K^\bot)$.\ We then apply Eichler's criterion again in $K^\bot$ and conclude by using the isomorphism $O(\L_{\KKK^{[2]}},h_0)\isom \widetilde O( K^\bot)$.}
they represent very general elements of one component of the special divisor $\cC_{2n,2e'n}^{(2)}$ (we will prove in Theorem~\ref{imper} that they exist if and only if $n >0$ and $e'>1$).\   

{\em We assume in the rest of this example that $n$ is square-free}.\ The hypersurface  $ \cC_{2n,2e'n}^{(2)}$ is then irreducible  by Proposition~\ref{propirr}(2)(b)  and very general elements of $\cC_{2n,2e'n}^{(2)}$ are of the type described above.\ 
 Cones of divisors on $X$ can be described as follows (we set $e:=e'n$).
\begin{itemize}
\item[(a)] The extremal rays of the closure of the  movable cone $\Mov(X)$ are spanned by $H-\mu_{n,e} L$ and $H+\mu_{n,e} L$, where
\begin{itemize}
\item[$\bullet$] if the equation $\cP_{e}(-n)$ is not solvable, $\mu_{n,e}=n/\sqrt{e}$;
\item[$\bullet$] if the equation $\cP_{e}(-n)$ is solvable and $(a_{-n},b_{-n})$ is its minimal solution, $\mu_{n,e}=\frac{a_{-n}}{e'b_{-n}}$.
\end{itemize}
\item[(b)] The extremal rays of the nef cone $\Nef(X)$ are spanned by $H-\nu_{n,e} L$ and $H+\nu_{n,e} L$, where
\begin{itemize}
\item[$\bullet$] if the equation $\cP_{4e}(-5n)$ is not solvable, $\nu_{n,e}=\mu_{n,e}$;
\item[$\bullet$] if the equation $\cP_{4e}(-5n)$ is solvable and $(a_{-5n},b_{-5n})$ is its minimal solution, $\nu_{n,e}=\frac{a_{-5n}}{2e'b_{-5n}}$.
\end{itemize}
\end{itemize}

To prove these statements, it is enough to notice that, in the notation of Theorem~\ref{thm:NefConeHK4}, a class in $\cD iv_X$ corresponds to a solution to the equation $\cP_{e}(-n)$; similarly, a class in $\cFlop_X$ corresponds to a solution to the equation $\cP_{4e}(-5n)$.\ The description of the cones of divisors on $X$ then follows   from Theorem~\ref{thm:NefConeHK4} by a direct computation.
\end{exam}

\section{The image of the period map}\label{sec8}

The description  of the cones of divisors for hyperk\"ahler fourfolds of $\KKK^{[2]}$-type given in  Section~\ref{sect3} easily implies our main result on the images of their period maps.
 
  \begin{theo}\label{imper}
Let $n$ be a positive integer and let $\gamma\in\{1,2\}$.\ The   image of the period map
  \begin{equation*} 
 \wp^{(\gamma)}_{2n}\colon\cM_{2n}^{(\gamma)}\lra  \cP_{2n}^{(\gamma)}
 \end{equation*}
  is exactly the complement of the union of finitely many Heegner divisors.\ More precisely, these Heegner divisors are
\begin{itemize}
\item if $\gamma=1$, 
\begin{itemize}
\item  some irreducible components of the hypersurface $\cD^{(1)}_{2n,2n}$ (two components if $n\equiv 0$ or $1\pmod4$, one component if $n\equiv 2$ or $3\pmod4$);
\item one irreducible component of the hypersurface $\cD^{(1)}_{2n,8n}$;
\item one irreducible component of the hypersurface $\cD^{(1)}_{2n,10n}$;
\item and, if $ n \equiv  \pm 5\pmod{25}$, some irreducible components of the hypersurface $\cD^{(1)}_{2n,2n/5}$;
\end{itemize}
\item if $\gamma=2$ (and $n\equiv   -1\pmod4$), one irreducible component of the hypersurface $\cD^{(2)}_{2n,2n}$.
\end{itemize}
\end{theo}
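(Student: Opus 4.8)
The plan is to translate the question ``which period points lie outside the image of $\wp^{(\gamma)}_{2n}$'' into a purely lattice-theoretic problem and then solve that problem using Theorem~\ref{thm:NefConeHK4}. By Verbitsky's Torelli theorem (Theorem~\ref{tor1}) the period map is an open embedding, and its image consists exactly of those period points $\ssp$ for which the corresponding polarized Hodge structure comes from an actual projective hyperk\"ahler fourfold with \emph{ample} polarization class $h_0$. A period point in a Heegner divisor $\cD^{(\gamma)}_{2n,K}$ corresponds to a fourfold $X$ with $\Pic(X)\supseteq K$, and $h_0$ is realized as an ample class on some birational model precisely when it lies in the closure of the movable cone (up to the $W_{\Exc}$-action) and in fact in the ample cone of $X$ itself. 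So $\ssp$ fails to be in the image if and only if, for the very general $X$ with period $\ssp$, the class $h_0$ is not ample on $X$ even though $h_0^2>0$ and $h_0$ is in the positive cone. Thus the complement of the image is the union of those Heegner divisors $\cD^{(\gamma)}_{2n,K}$ for which $h_0$ is separated from the ample cone of the generic member by one of the walls described in Theorem~\ref{thm:NefConeHK4}.

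The key steps, in order, are as follows. First, reduce to Picard rank $2$: a very general point of $\cD^{(\gamma)}_{2n,K}$ corresponds to $X$ with $\Pic(X)=K$, so we work inside the rank-$2$ lattice $K\ni h_0$. Second, recall from Theorem~\ref{thm:NefConeHK4} that $h_0$ fails to be ample on $X$ exactly when there exists either a class $a\in K$ with $a^2=-2$ with $h_0$ and an ample class on opposite sides of $H_a$ (so $h_0\notin\overline{\Mov(X)}$ up to reflections---but since $\overline{\Mov(X)}\cap\Pos(X)$ is a fundamental domain for $W_{\Exc}$, this means $h_0$ lies strictly outside the $W_{\Exc}$-translate containing the ample cone and no birational model has $h_0$ ample), or a class $a\in K$ with $a^2=-10$ and $\div_{\L_{\KKK^{[2]}}}(a)=2$ cutting $h_0$ off from $\Amp(X)$ inside $\Mov(X)$. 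Third, make this explicit in coordinates: write $K=\Z h_0\oplus \Z\kappa$ with $\kappa\in h_0^\perp$, parametrize classes $a=xh_0+y\kappa$, and translate ``$a^2=-2$ and $a$ separates $h_0$ from ample'' and ``$a^2=-10$, $\div=2$, separates inside $\Mov$'' into conditions on $\disc(K)$ (equivalently on $e=d/2$) via the same discriminant computations used in the proof of Proposition~\ref{propirr} and the Pell-type analysis of Examples~\ref{ex:NefConesHilbertSchemes} and~\ref{ex:NefConesFano}. Concretely: a wall with $a^2=-2$ ``too close'' to $h_0$---meaning one forcing $h_0$ out of the movable cone, which happens precisely in the minimal-discriminant cases---gives the $\cD^{(\gamma)}_{2n,2n}$ contribution; a wall with $a^2=-2$, $\div=2$ gives (via $a\mapsto 2\tilde a-v_X$ type bookkeeping, or directly) the $\cD^{(1)}_{2n,8n}$ contribution; a wall with $a^2=-10$, $\div=2$ that lies strictly between $h_0$ and an ample class gives $\cD^{(1)}_{2n,10n}$; and the sporadic $\cD^{(1)}_{2n,2n/5}$ family comes from the solvability of the Pell equation $\cP_{4e}(5)$ (equivalently $\cP_{4n}(5n)$ after clearing, whose solvability is governed by $5$ being a square mod the relevant modulus, i.e.\ the condition $n=5^{2\alpha+1}n''$ with $n''\equiv\pm1\pmod5$) forcing the nef cone to be strictly smaller than the movable cone in a way that excludes $h_0$. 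Fourth, for $\gamma=2$ run the same argument inside the rank-$2$ lattice of Example~\ref{ex:NefConesFano}; here only the minimal case $e'=1$ (i.e.\ $\disc(K)$ minimal, giving $d=2n$) produces an obstruction, yielding the single component of $\cD^{(2)}_{2n,2n}$. Fifth, check finiteness and that no other discriminant contributes: for all larger discriminants the relevant $(-2)$- and $(-10)$-walls lie far enough from $h_0$ that $h_0$ is ample on $X$ itself (or on a birational model, which suffices), so $\ssp$ is in the image; this is exactly the statement that the conditions singled out above are the only ones in which $h_0$ is separated from every $W_{\Exc}$-translate of the ample cone.

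I would carry out step three (and its $\gamma=2$ analogue in step four) by fixing the explicit model of $h_0^\perp$ used in the proof of Proposition~\ref{propirr}---namely $h_0=u+nv$ with $h_0^\perp=\Z(u-nv)\oplus\Z\ell\oplus M$ when $\gamma=1$, and $h_0=2(u+\tfrac{n+1}{4}v)+\ell$ when $\gamma=2$---so that the wall classes $a$ and their squares and divisibilities are read off directly, and the condition ``$h_0$ is on the wrong side of $H_a$'' becomes a clean inequality on the Pell ratios $\mu,\nu$ versus the slope $h_0/\kappa$. Then I would verify that $a^2=-2$ with $\div=1$ forces $\disc K^\perp$ to be exactly the value giving $\cD_{2n,2n}$ (using Eichler's criterion to pin down $\kappa_*$, and Proposition~\ref{propirr} to count components---two when $n\equiv0,1\pmod4$ and one when $n\equiv2,3\pmod4$, matching the statement), $a^2=-2$ with $\div=2$ forces the value giving $\cD_{2n,8n}$, and $a^2=-10$ with $\div=2$ forces the value giving $\cD_{2n,10n}$; the extra family $\cD_{2n,2n/5}$ is exactly the locus where $\cFlop_X$ (the $a^2=-10$, $\div=2$ classes, equivalently solutions of $\cP_{4e}(5)$ with $e=n$ up to the normalization in Example~\ref{ex:NefConesHilbertSchemes}) meets the interior of $\Mov$ on the far side of $h_0$, and the arithmetic of when $\cP_{4n}(5n)$ has the relevant solutions is precisely the stated condition on the $5$-adic valuation of $n$.

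The main obstacle I expect is step three: correctly identifying \emph{which} wall---and on which side---actually separates $h_0$ from the ample (or movable) cone of the very general member, as opposed to walls that merely bound the cone from the other direction and hence do not obstruct $h_0$ from being ample on $X$ or on a birational model. Concretely, one must be careful that a $(-2)$-class produces an obstruction only when it pushes $h_0$ entirely out of $\overline{\Mov(X)}$ (using that the latter is a fundamental domain for $W_{\Exc}$ by Remark~\ref{rmk:NefConeHK4}(a), so that being outside it genuinely means $h_0$ is not ample on any birational model), whereas a $(-10)$, $\div\,2$-class obstructs only when it lies strictly between $h_0$ and the ample chamber \emph{inside} $\Mov(X)$---and one then has to confirm, via the Pell-equation bookkeeping, that these configurations occur for exactly the discriminants $2n$, $8n$, $10n$ (and the sporadic $2n/5$), and for no others. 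The component count in the $\cD_{2n,2n}$ and $\cD_{2n,2n/5}$ cases, which hinges on the reducibility analysis of Proposition~\ref{propirr} and the $n\bmod 4$ dichotomy, is the delicate bookkeeping that finishes the proof.
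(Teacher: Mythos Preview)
Your overall strategy---reduce to the wall structure of Theorem~\ref{thm:NefConeHK4}---is the right one, but you are solving a harder problem than necessary, and this leads to concrete errors in the bookkeeping.

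The key simplification you are missing is this: given \emph{any} period point $x$, one can always produce a projective hyperk\"ahler fourfold $X$ with period $x$ on which the class $H$ corresponding to $h_0$ is already \emph{nef and big}. Indeed, surjectivity of the unpolarized period map gives some $X'$ with period $x$; acting by $W_{\Exc}$ (Remark~\ref{rmk:NefConeHK4}(a)) brings $H$ into $\overline{\Mov(X')}\cap\Pos(X')$, and passing to the appropriate birational model (Remark~\ref{rmk:NefConeHK4}(b)) makes $H$ nef. After this reduction the question ``is $x$ in the image?'' becomes ``is $H$ in the \emph{interior} of $\Nef(X)$?'', which by Theorem~\ref{thm:NefConeHK4} is exactly ``is $H$ orthogonal to some class in $\cD iv_X\cup\cFlop_X$?''. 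There is no ``which side of the wall'' issue: $H$ is already on the correct side of every wall, and the only possible obstruction is that $H$ lies \emph{on} a wall. Your anticipated ``main obstacle'' therefore does not arise.

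This makes the computation far simpler than what you outline. A period point is excluded iff there exists $\kappa\in\Pic(X)$ with $H\cdot\kappa=0$ and either $\kappa^2=-2$, or $\kappa^2=-10$ and $\div_{H^2(X,\Z)}(\kappa)=2$. One then reads off $\disc(K^\perp)$ for $K=\Z H\oplus\Z\kappa$ directly from~\eqref{eqd}, case-splitting on $s:=\div_{h_0^\perp}(\kappa)$. For $\gamma=1$: a $(-2)$-class with $s=1$ gives $e=4n$ (so $\cD^{(1)}_{2n,8n}$) and $s=2$ gives $e=n$ (so $\cD^{(1)}_{2n,2n}$)---your attribution of these two is reversed, and the ``$2\tilde a-v_X$'' bookkeeping you invoke for the $8n$ case belongs to the $\cFlop$ story, not to $(-2)$-classes. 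A $(-10)$-class with $\div_{H^2}=2$ has $s\in\{2,10\}$: $s=2$ gives $e=5n$ (so $\cD^{(1)}_{2n,10n}$), while $s=10$ gives $e=n/5$ (so $\cD^{(1)}_{2n,2n/5}$), and solving the resulting congruences on $\kappa_*$ yields precisely the stated $5$-adic condition on $n$. No Pell equations enter; your $\cP_{4e}(5)$ explanation for the $2n/5$ locus is not the mechanism at work. For $\gamma=2$ the same orthogonality computation gives $e=n$ in the $(-2)$-case and shows the $(-10)$-case is impossible (one gets $t^2=n^2/10$), so only $\cD^{(2)}_{2n,2n}$ appears.
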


\begin{rema}\label{qqq}
Assume that $n$ is square-free (so in particular $n\not\equiv 0\pmod4$).\ We proved in Proposition~\ref{propirr} that   
\begin{itemize}
\item the hypersurface $\cD^{(1)}_{2n,2n}$ has two components if $n\equiv  1\pmod4$, one component otherwise;
\item the hypersurface $\cD^{(1)}_{2n,8n}$ has two components if $n\equiv  -1\pmod4$, one component otherwise;
\item the hypersurface $\cD^{(1)}_{2n,10n}$ has two components if $n\equiv   1\pmod4$, one component otherwise;
\item the hypersurface $\cD^{(1)}_{2n,2n/5}$ has two components if $n\equiv   1\pmod4$, one component otherwise;
\item the hypersurface $\cD^{(2)}_{2n,2n}$ is irreducible (when $n\equiv   -1\pmod4$).
\end{itemize}
Furthermore, it follows from the proof of the theorem that when moreover $ n \equiv  \pm 5\pmod{25}$,
\begin{itemize}
\item the hypersurface $\cD^{(1)}_{2n,2n/5}$ is irreducible.
\end{itemize}
\end{rema}

\begin{proof}[Proof of Theorem~\ref{imper}]
Take a  point $x\in \cP_{2n}^{(\gamma)}$.\ Since the period map for smooth compact (not necessarily projective) hyperk\"ahler fourfolds is surjective (\cite[Theorem~8.1]{huy1}), there exists a compact hyperk\"ahler fourfold $X'$ with the given period point $x$.\  Since the class $h_0$ is algebraic and has positive square, $X'$ is projective by~\cite[Theorem~3.11]{huy1}.\ Moreover, the class $h_0$ corresponds to the class of an integral divisor $H$ in the positive cone of $X'$.\ By Remark~\ref{rmk:NefConeHK4}(a), we can let an element in the group $W_{\Exc}$ act and assume that the pair $(X',H)$, representing the period point $x$ and the class $h_0$, is such that $H$ is in $\overline{\Mov(X')}\cap\Pos(X')$.\  By Remark~\ref{rmk:NefConeHK4}(b), we can find a projective hyperk\"ahler fourfold $X$  which is birational to $X'$ (hence still has period~$x$), such that the divisor $H$, with class $h_0$, is nef and big on $X$ and has divisibility $\gamma$.\ 
Note that, since $X'$ is birational to $X$, it is deformation equivalent to $X$ (\cite[Theorem~4.6]{huy1}), hence still of $\KKK^{[2]}$-type.
 
To summarize, the point $x$ is in the image of the period map $\wp^{(\gamma)}_{2n}$ if and only if $H$ is actually ample on $X$.\ We now  use Theorem~\ref{thm:NefConeHK4}: $H$ is ample if and only if it is not orthogonal to any algebraic class either with square $-2$, or with square $-10$ and divisibility~2. 

{\em If $H$ is orthogonal to an algebraic class $\kappa$ with square $-2$,} the Picard group of $X$ contains a rank-2 lattice $K$ with intersection matrix  $\bigl(\begin{smallmatrix}2n&0\\0&-2\end{smallmatrix}\bigr)$; the fourfold $X$ is therefore special of discriminant $2e:=-\disc(K^\perp)$ (its period point is in the hypersurface $\cD^{(\gamma)}_{2n,K} $).\ 

If $\gamma=1$,   the   divisibility $s:=\div_{h_0^\perp}(\kappa)$ is either 1 or 2.\ By \eqref{kap}, we have   $es^2=-2n\kappa^2=4n$, hence
\begin{itemize}
\item either $s=1$, $e=4n$, and $\kappa_*=0$: the period point is then in one irreducible component of the hypersurface $\cD^{(1)}_{2n,8n}$;
\item or $s=2$, $e= n$, and 
\begin{itemize}
\item either $\kappa_*=(0,1)$;
\item or $\kappa_*=(n,0)$ and $n\equiv 1\pmod4$;
\item or $\kappa_*=(n,1)$ and $n\equiv 0\pmod4$.
\end{itemize}
\end{itemize}
The period point $x$ is  in one irreducible component of the hypersurface $\cD^{(1)}_{2n,2n}$ if $n\equiv 2$ or $3\pmod4$, or in the union of two such components otherwise.

If $\gamma=2$, we have $e=-\disc(K)/4=n$, $t =\sqrt{2n\disc(K)/\kappa^2}=2n $, and $\div(\kappa)=2n/t=1$, hence $\kappa_*=0$: the period point $x$ is in one irreducible component of the hypersurface $\cD^{(2)}_{2n,2n}$.

{\em If $H$ is orthogonal to an algebraic class with square $-10$ and divisibility 2,} the Picard group of $X$ contains a rank-2 lattice $K$ with intersection matrix  $\bigl(\begin{smallmatrix}2n&0\\0&-10\end{smallmatrix}\bigr)$, hence $X$ is   special of discriminant $2e:=-\disc(K^\perp)$.\ Again, we distinguish two cases, keeping the same notation.

If $\gamma=1$,   the   divisibility $s:=\div_{K^\perp}(\kappa)$ is even (because the divisibility in $H^2(X,\Z)$ is 2) and divides $\kappa^2=-10$, hence it is
 either  2 or 10 .\ Moreover, $es^2=-2n\kappa^2=20n$, hence
\begin{itemize}
\item either  $s=2$, $e= 5n$, and $\kappa_*=(0,1)$: the period point $x$ is then in one irreducible component of the hypersurface $\cD^{(1)}_{2n,10n}$;
\item or $s=10$ and $e= n/5$: the period point is then in   the hypersurface $\cD^{(1)}_{2n,n/5}$.
\end{itemize}
In the second case, since the divisibility of $\kappa$ in $H^2(X,\Z)$ is 2,
 $a$ and $c$ are even, so that $b$ is odd and $\kappa_*=(a,1)$.\ We have $10=s=\gcd(2na,2b,2c)$, hence $b$ and $c$ are divisible by $5$, but not $a$, because $\gcd( a, b, c)=1$.\ We have $e \equiv  a^2+ n \pmod{4n}$, hence $ e\equiv    a^2\equiv  \pm1\pmod{5}$.\

In general, there are many possibilities for $a=2a'$, with $a^{\prime 2}\equiv e\pmod {5e}$.\ However, if $n$ is square-free, $e$ divides $a'$ and   $  (a'/e)^2\equiv 1 \pmod{5}$, so that $a \equiv  \pm 2e\pmod{2n}$.\ It follows that
 $\pm a$ (hence also $\pm \kappa_*$) is well determined (modulo $2n$), so we have a single component of $\cD^{(1)}_{2n,n/5}$.

If $\gamma=2$, we have $e=-\disc(K)/4=5n$ and $t^2 = {2n\disc(K)/\kappa^2}=n^2/10 $, which is impossible.

Conversely, in each case  described above, it is easy to construct a class $\kappa$ with the required square and divisibility which is orthogonal to $H$.
\end{proof}

\begin{exam}[Double EPW sextics: $n=\gamma=1$]\label{qqq1}
Double EPW sextics  were defined in~\cite{og4} as ramified  double covers of certain singular sextic  hypersurfaces in $ \P^5$.\ When smooth, they are hyperk\"ahler fourfolds of $\KKK^{[2]}$-type with a polarization of degree 2.\ They fill out a dense open subset $\cU_2^{(1)}$ of $\cM_2^{(1)}$ whose complement contains the irreducible hypersurface $\cH_2^{(1)}$ whose general points correspond to     pairs $(S^{[2]}, L_2-\delta)$, where~$(S,L)$ is a polarized K3 surface of degree 4 (\cite[Section 5.3]{og6}).   
 
  O'Grady proved that the image of $\cU_2^{(1)}$ in the period space does not meet  $\cD_{2,2}^{(1)} $, $\cD_{2,4}^{(1)} $,  $\cD_{2,8}^{(1)} $, and one component of $\cD_{2,10}^{(1)} $ (\cite[Theorem~1.3]{og6}\footnote{O'Grady's hypersurfaces $\mathbb{S}'_2\cup \mathbb{S}''_2$,    $\mathbb{S}_4$,
$\mathbb{S}^\star_2$, are our $\cD_{2,2}^{(1)} $, $\cD_{2,4}^{(1)} $,  $\cD_{2,8}^{(1)} $.}); moreover, by~\cite[Theorem~8.1]{dims},  this image does meet   all the other components of the  non-empty hypersurfaces  $\cD_{2,d}^{(1)} $.\  The hypersurface $\cH_2^{(1)}$ maps to $\cD_{2,4}^{(1)} $.\ These results agree with Theorem~\ref{imper} and  Remark~\ref{qqq}, which say that the image of $\cM_2^{(1)}$ in the period space is the complement of the union of $\cD^{(1)}_{2 ,2 }$, $\cD^{(1)}_{2 ,8}$, and one of the two components of $\cD^{(1)}_{2 ,10}$.\ However, our theorem says nothing about the image of 
 $\cU_2^{(1)}$.\ O'Grady conjectures that it is the complement of the   hypersurfaces $\cD_{2,2}^{(1)} $, $\cD_{2,4}^{(1)} $,  $\cD_{2,8}^{(1)} $, and one component of $\cD_{2,10}^{(1)} $; this would follow if one could prove $\cM_2^{(1)}=\cU_2^{(1)}\cup \cH_2^{(1)}$.
 \end{exam}

\begin{exam}[Varieties of lines on cubic fourfolds: $n=3$ and $\gamma=2$]
If $W\subset \P^5$  is a smooth cubic fourfold, the variety $F(W)$ of lines contained in $W$ is a hyperk\"ahler fourfold and its Pl\"ucker polarization has square $ 6$ and divisibility~2 (\cite{bedo},~\cite[Proposition 2.1.2]{has}).\ These fourfolds fill out a dense open subset  $\cU_6^{(2)}$  of $\cM_6^{(2)}$ whose complement contains an irreducible hypersurface $\cH_6^{(2)}$ whose general points correspond to   pairs $(S^{[2]},2L_2-\delta)$, where~$(S,L)$ is a polarized K3 surface of degree 2 (see Proposition~\ref{coro43}). 

Theorem~\ref{imper} and Remark~\ref{qqq} say that the image of $\cM_6^{(2)}$ in the period space is the complement of the irreducible hypersurface $\cD^{(2)}_{6 ,6}$.\ This (and much more) was first proved by Laza in~\cite[Theorem~1.1]{laza}, together with the fact that $\cM_6^{(2)}=\cU_6^{(2)}\cup \cH_6^{(2)}$; since $\cH_6^{(2)} $ maps onto $\cD^{(2)}_{6 ,2}$,  the image of $\cU_6^{(2)}$  is the complement of $\cD^{(2)}_{6 ,2}\cup \cD^{(2)}_{6 ,6}$.
\end{exam}

\section{Unexpected isomorphisms between hyperk\"ahler fourfolds}\label{sec9}

In this section, we study birational isomorphisms between components of various Noether--Lefschetz loci induced by ``unexpected'' isomorphisms between hyperk\"ahler fourfolds.\  We treat first the case of Hilbert squares.

\subsection{Special hyperk\"ahler fourfolds isomorphic to Hilbert squares of   K3 surfaces}\label{sec71}
If a polarized hyperk\"ahler fourfold is isomorphic to the Hilbert square of a  K3 surface, it is special in the sense  defined in Section~\ref{sec22}.\  We use  standard notation for cohomology classes on a Hilbert
square (see Example~\ref{ex:NefConesHilbertSchemes}).\ 
The slope $\nu_e$ was defined in the same example and the special loci $\cC_{2n,2e}^{(\gamma)}\subset \cM_{2n}^{(\gamma)}$ in Section~\ref{sec22}.

\begin{prop} \label{thh}
Let $n$ and $e$  be positive integers.\ 
Assume that the equation $\cP_{e}(-n)$ (see \eqref{pet}) has a positive solution $(a,b)$ that satisfies the conditions
\begin{equation}\label{nue}
\frac{a}{b}<\nu_e\qquad{and}\qquad \gcd(a,b)=1.
\end{equation}
If $\cK_{2e}$ is the moduli space  of polarized K3 surfaces of degree $2e$, the rational map
\begin{eqnarray*}
\varpi\colon \cK_{2e} &\dra& \cM_{2n}^{(\gamma)}\\
(S,L)&\longmapsto&(S^{[2]}, bL_2-a\delta ),
\end{eqnarray*}
where $\gamma=2$ if $b$ is even, and $\gamma=1$ if $b$ is odd,
induces a birational isomorphism onto an irreducible component of $\cC_{2n,2e}^{(\gamma)}$.
 \end{prop}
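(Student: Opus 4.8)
The strategy is to verify that the map $\varpi$ is well-defined on a dense open subset of $\cK_{2e}$, that its image lands in the claimed component of $\cC_{2n,2e}^{(\gamma)}$, and that it is generically injective with dense image, so that it induces a birational isomorphism onto that component. First I would fix a very general $(S,L)\in\cK_{2e}$, so that $\Pic(S)=\Z L$ with $L^2=2e$, and hence (Example~\ref{ex:NefConesHilbertSchemes}) $\Pic(S^{[2]})=\Z L_2\oplus\Z\delta$ with $L_2^2=2e$, $\delta^2=-2$, $L_2\cdot\delta=0$. For the class $H:=bL_2-a\delta$ one computes $H^2=2(eb^2-a^2)\cdot(-1)\cdot(-1)$; since $(a,b)$ solves $\cP_e(-n)$, i.e. $a^2-eb^2=-n$, we get $H^2=2(eb^2-a^2)=2n$, as required for a polarization of square $2n$. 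The divisibility of $H$ in $H^2(S^{[2]},\Z)$ is governed by $b$ and the class $\delta$ (which has divisibility $2$ in the lattice $\L_{\KKK^{[2]}}$): writing $H=bL_2-a\delta$ and using that $L_2$ is primitive of divisibility $1$ while $2\delta$ is integral, one checks that $\div(H)=2$ exactly when $b$ is even and $\div(H)=1$ when $b$ is odd — here the hypothesis $\gcd(a,b)=1$ ensures $H$ itself is primitive. Finally $H$ must be ample: $H=bL_2-a\delta$ lies in the interior of the nef cone $\Nef(S^{[2]})$ precisely when $0<a/b<\nu_e$ (the extremal rays being spanned by $L_2$ and $L_2-\nu_e\delta$), which is exactly condition~\eqref{nue}. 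So $\varpi$ is defined at every very general point, hence on a dense open subset of $\cK_{2e}$.

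Next I would identify the target component. The pair $(S^{[2]},H)$ has Picard lattice $K:=\Pic(S^{[2]})\supset\Z H$, which is a rank-$2$, signature-$(1,1)$ primitive sublattice of $\L_{\KKK^{[2]}}$ containing (the image of) $h_0$; so the period point of $(S^{[2]},H)$ lies in some Heegner divisor $\cD^{(\gamma)}_{2n,K}$. A direct computation gives $\disc(K)=-4n(eb^2-a^2)/\!\det = \dots$; more cleanly, $\disc(K^\perp)$ is computed from $\disc(K)$ and $\disc(\L_{\KKK^{[2]}})=2$ via the formula already used in the proof of Proposition~\ref{propirr}, and one finds $|\disc(K^\perp)|=2e$. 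Hence the period point lies in $\cD^{(\gamma)}_{2n,2e}$, and $(S^{[2]},H)$ defines a point of $\cC_{2n,2e}^{(\gamma)}$. To pin down which irreducible component, one computes the invariant $\kappa_*\in D(h_0^\perp)$ attached to a generator $\kappa$ of $K\cap h_0^\perp$, exactly as in the proof of Proposition~\ref{propirr}; since $(S,L)$ is very general, all such pairs have the same $\kappa_*$ (up to sign), so they all lie in a single component, which I name as the target.

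The last and genuinely substantive step is generic injectivity together with dominance onto that component. For dominance: a very general point of the chosen component of $\cC^{(\gamma)}_{2n,2e}$ corresponds to a hyperk\"ahler fourfold $X$ of $\KKK^{[2]}$-type whose Picard lattice is exactly the rank-$2$ lattice $K$ above (Picard number $2$ for very general points of a Heegner divisor); I would then invoke Remark~\ref{rmk:NefConeHK4}(c) — via~\cite{add}, \cite{bama} — to conclude that, because $K$ contains a class of square $-2$ and divisibility $2$ (namely the appropriate primitive multiple of $\delta$, or more precisely a $(-2)$-class of divisibility $2$ constructed from the lattice data), $X$ is birational, hence (being a fourfold with a Lagrangian/divisorial birational model) actually isomorphic, to the Hilbert square $S^{[2]}$ of a K3 surface $S$ with $\Pic(S)=\Z L$, $L^2=2e$; chasing the polarization through this isomorphism recovers $bL_2-a\delta$ by the uniqueness (up to sign and up to the $\Aut$-action, which here is controlled by the appendices) of a primitive ample class of square $2n$ in $K$. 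Thus $\varpi$ is dominant onto the component. For injectivity: if $(S^{[2]},bL_2-a\delta)\isom(S'^{[2]},bL'_2-a\delta')$ as polarized varieties, then the K3 surfaces $S$ and $S'$ have isometric (rank-$1$) Hodge structures on $H^2$, hence are isomorphic by the Torelli theorem for K3 surfaces, and the polarizations match since $L$ is the unique positive primitive class in $\Pic(S)$.

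The main obstacle I anticipate is the dominance/"isomorphism, not merely birational" step: one must argue carefully that the very general fourfold in the component is not just birational but genuinely isomorphic to a Hilbert square carrying the specified polarization, which requires combining Remark~\ref{rmk:NefConeHK4}(b)--(c), the classification of Mukai flops of Lagrangian planes, and a check — using the nef-cone description of Theorem~\ref{thm:NefConeHK4} applied to $S^{[2]}$ via Example~\ref{ex:NefConesHilbertSchemes} — that the polarization $bL_2-a\delta$ is genuinely ample (not just nef and big) on the correct birational model; the inequality $a/b<\nu_e$ in~\eqref{nue} is exactly what makes this work, and its interaction with the possible presence of $(-10)$-classes of divisibility $2$ in $K$ is the delicate point.
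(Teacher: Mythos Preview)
Your well-definedness and target-identification steps are fine and match the paper. The genuine gap is in your injectivity argument, and relatedly you have misjudged where the difficulty lies.

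For injectivity you write that an isomorphism $\phi\colon (S^{[2]},H)\isomto (S^{\prime[2]},H')$ of polarized varieties forces ``isometric (rank-1) Hodge structures on $H^2$'' and hence $S\isom S'$ by K3 Torelli. This does not follow as stated: the Hodge isometry $\phi^*$ on $H^2(S^{\prime[2]},\Z)$ need not send $\delta'$ to $\delta$, so it need not restrict to a Hodge isometry $H^2(S',\Z)\to H^2(S,\Z)$ (the K3 cohomology sits inside as $\delta^\perp$). The paper confronts exactly this case. If $\phi^*\delta'\ne\delta$, then $L_2$ and $\phi^*L'_2$ span the two extremal rays of $\Nef(S^{[2]})$; comparing with Example~\ref{ex:NefConesHilbertSchemes} forces $e$ non-square, $\phi^*L'_2=a_1L_2-eb_1\delta$ with $(a_1,b_1)$ the minimal solution of $\cP_e(1)$, and (as in the proof of Theorem~\ref{bbb}) $\cP_e(-1)$ solvable and $\cP_{4e}(5)$ not. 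By Theorem~\ref{bbb}, $S^{[2]}$ then carries a non-trivial involution $\sigma$, and one checks $(\phi\circ\sigma)^*$ does send $(L'_2,\delta')$ to $(L_2,\delta)$, hence is induced by an isomorphism $(S,L)\isomto(S',L')$. So the conclusion you want is true, but the argument requires the Appendix~\ref{appA} classification of $\Aut(S^{[2]})$, not a bare appeal to K3 Torelli.

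Conversely, your ``main obstacle'' is not one: once generic injectivity is established, the map $\varpi$ sends the irreducible 19-dimensional $\cK_{2e}$ injectively (on a dense open) into $\cC^{(\gamma)}_{2n,2e}$, whose components are 19-dimensional Heegner divisors; hence the image is automatically a component. There is no need to argue, via Remark~\ref{rmk:NefConeHK4}(c) or otherwise, that a very general point of the component is a Hilbert square. (Indeed, as Remark~\ref{rmk:Picard2IsomorphicHilb2} indicates, that stronger statement is true but is a separate and more delicate fact.)
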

 
\begin{proof}
If $(S,L)$ is a   polarized K3 surface of degree $2e$ and $K:=\Z L_2\oplus \Z\delta \subset H^2(S^{[2]},\Z)$, the lattice $K^\bot$ is the orthogonal in $H^2(S,\Z)$ of the class $L$.\ Since the lattice $H^2(S,\Z)$ is unimodular, $K^\bot$ has discriminant $-2e$, hence $S^{[2]}$ is special of discriminant  $2e$.

The class $H:=bL_2-a\delta$ has   divisibility~$\gamma$ and square $2n$.\ It is   primitive, because $\gcd(a,b)=1$,  and ample on $S^{[2]}$ when $\Pic(S)=\Z L$  because of   the inequality in~\eqref{nue}.\ 
Therefore, the pair $(S^{[2]},H)$ corresponds to a point of $\cC_{2n,2e}^{(\gamma)}$.
 
The map $\varpi$ therefore sends   a very general point  of $\cK_{2e}$ to $\cC_{2n,2e}^{(\gamma)}$.\ To prove   that $\varpi$ is generically injective, we assume to the contrary that there is an isomorphism $\phi\colon S^{[2]}\isomto S^{\prime [2]}$ such that $\phi^*(bL'_2-a\delta')=bL_2-a\delta$, although  $(S,L)$ and $(S',L')$ are not isomorphic.\ It is straightforward to check that this implies $\phi^*\delta'\ne \delta$ and  that the extremal rays of the nef cone of $S^{[2]}$ are spanned by the primitive classes $L_2$ and $\phi^*L'_2$.\ Comparing this with the description of the nef cone given in Example~\ref{exa53}, we see that $e$ is not a perfect square, $\phi^*L'_2=a_1L_2-eb_1\delta$ and $\phi^*(a_1L'_2-eb_1\delta')=L_2$, where 
 $(a_1,b_1)$ is the minimal solution to the Pell equation $\cP_e(1)$.\ The same proof as that of Theorem~\ref{bbb} implies  $e>1$, the equation $\cP_{e}(-1)$ is solvable  and  the equation $\cP_{4e}(5)$ is not.

By Theorem~\ref{bbb} again, $S^{[2]}$ has a non-trivial involution $\sigma$ and $(\phi\circ\sigma)^*(L'_2)=L_2$ and $(\phi\circ\sigma)^*(\delta')=\delta$.\ This implies that $\phi\circ\sigma$ is induced by an isomorphism $(S,L) \isomto (S',L')$, which contradicts our hypothesis.\  The map $\varpi$ is therefore generically injective and since $\cK_{2e}$ is irreducible of  dimension~19, its image is a component of $\cC_{2n,2e}^{(\gamma)}$.
\end{proof}

\begin{rema}\label{ex:nprimebeven}
Assume that $n$ is prime.\ The locus $\cC_{2n,2e}^{(2)}$ is irreducible by Proposition~\ref{propirr}.\  Therefore, under the assumptions of Proposition~\ref{thh} and when  $b$ is even, we have a birational isomorphism $\cK_{2e}\isomdra\cC_{2n,2e}^{(2)}$.\ When $e>61$, the varieties $\cK_{2e}$ are known to be of general type   (\cite{ghs0}), hence  so is $\cC_{2n,2e}^{(2)}$.\ More precise results on the geometry of the varieties $\cC_{6,2e}^{(2)}$ can be found in~\cite{nue,vas,lai}.
\end{rema}

\begin{exam}\label{ex:HilbEPW}
Assume $n=1$.\  Under the assumptions of Proposition~\ref{thh},  $b$ is odd.\ The locus $\cC_{2,2e}^{(1)}$   has either one or two components, according to whether $e$ is even or odd (Proposition~\ref{propirr}).\ If $e$ is odd, we have $e>1$ and one checks that the image of $\varpi$ is the component of $\cC_{2,2e}^{(1)}$ denoted by   $\cC_{2,2e}^{(1)\prime\prime}$ in~\cite[Section~4.3]{og6}.\
Therefore, we have  birational isomorphisms
\begin{eqnarray*}
\varpi\colon \cK_{2e} &\isomdra& \begin{cases}\cC_{2,2e}^{(1)}&\textnormal{ if $e$ is even;}  \\
\cC_{2,2e}^{(1)\prime\prime}&\textnormal{ if $e$ is odd.} 
\end{cases}
\end{eqnarray*}
\end{exam}

\begin{rema}\label{pesq}
  When  $e $ is a perfect square, the  positive solutions $(a,b)$ to the equation  $\cP_{e}(-n)$
satisfy  $a- b\sqrt{e}= -n''$ and $a+ b\sqrt{e}=n' $, with $n=n'n''$.\ This implies $a=\frac12(n'-n'')$ and $b\sqrt{e}=\frac12(n'+n'')$, hence $0<n''<n'$.\ We then have $\frac{a}{b}=\frac{n'-n''}{n'+n''}\sqrt{e}<\sqrt{e}=\nu_e$ hence  Proposition~\ref{thh}  applies to all positive solutions $(a,b)$ of the equation $ \cP_{e}(-n)$  with $\gcd(a,b)=1$.\ In particular, when $n$ is odd and $n>1$, we obtain a geometric description of the fourfolds corresponding to general points of some component of $\cC_{2n,2}^{(\gamma)}$, where $\gamma=1$ if $n\equiv 1\pmod4$, and $\gamma=2$ if $n\equiv -1\pmod4$ (take $n'=n$ and $n''=1$).\end{rema}

\begin{rema}\label{rmk:Picard2IsomorphicHilb2}
Under the hypotheses of   Proposition~\ref{thh}, one can show that \emph{all} polarized hyperk\"ahler fourfolds $(X,H)$ with Picard number $2$ which are in the component of $\cC_{2n,2e}^{(\gamma)}$ dominated by $\cK_{2e}$ are actually isomorphic to a Hilbert square $S^{[2]}$; however, {\em some} generality condition on $X$ is needed:  the varieties  of lines of some smooth cubic fourfolds of discriminant~$14$ ($n=3$, $\gamma=2$, $e=7$) are not isomorphic to the Hilbert square of a K3 surface.
\end{rema}

We deduce from Proposition~\ref{thh} a characterization of Hilbert squares of general polarized K3 surfaces that are isomorphic to double EPW sextics.

 \begin{coro}\label{cor53}
Let $e$ be an integer such that $e\ge3$ and let $(S,L )$ be a general polarized K3 surface of  degree $2e$.\ The following conditions are equivalent:
\begin{itemize}
\item [{\rm (i)}] the equation $\cP_{e}(-1)$ is solvable and the equation $\cP_{4e}(5)$ is not;
\item [{\rm (ii)}] the equation $\cP_{e}(-1)$ has a positive solution $(a,b)$ such that  $\frac{a}{b}<\nu_e$;
\item [{\rm (iii)}]   the Hilbert square    $S^{[2]}$ is isomorphic   to  a double EPW sextic  of discriminant $2e$;
\item [{\rm (iv)}] the variety $S^{[2]}$ has a non-trivial automorphism.
\end{itemize}
When these conditions are realized, $S^{[2]}$ then has a non-trivial involution $\sigma$, the  quotient  $S^{[2]}/\sigma$  is an EPW sextic $Y\subset \P^5$, and  the complete linear system $|bL_2-a\delta|$ defines a morphism which  factors as 
 $S^{[2]}\thra S^{[2]}/\sigma= Y \hra \P^5$. 
\end{coro}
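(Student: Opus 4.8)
The plan is to establish the cyclic chain of implications $(\mathrm i)\Rightarrow(\mathrm{ii})\Rightarrow(\mathrm{iii})\Rightarrow(\mathrm{iv})\Rightarrow(\mathrm i)$ and then the concluding geometric assertion. Arranging the argument in this cyclic order is what makes it efficient: the only delicate number‑theoretic implication, $(\mathrm{ii})\Rightarrow(\mathrm i)$, is obtained for free by going around the circle, while the easy direction $(\mathrm i)\Rightarrow(\mathrm{ii})$ is exactly what is needed to invoke Proposition~\ref{thh}. Throughout I will use that for general $(S,L)$ of degree $2e$ one has $\Pic(S^{[2]})=\Z L_2\oplus\Z\delta$ with $L_2^2=2e$, $\delta^2=-2$, $L_2\cdot\delta=0$, and that by Example~\ref{exa53} the cone $\Nef(S^{[2]})$ has extremal rays $\R_{\ge0}L_2$ and $\R_{\ge0}(L_2-\nu_e\delta)$.

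For $(\mathrm i)\Rightarrow(\mathrm{ii})$ I would first note that solvability of $\cP_e(-1)$ forces $e$ not to be a perfect square (for $e\ge3$ a square, $a^2-eb^2=-1$ has no solution), take the minimal positive solution $(a,b)$ (automatically primitive, with $b$ odd), use that $\cP_{4e}(5)$ unsolvable gives $\nu_e=\mu_e$, and observe that the minimal solution of $\cP_e(1)$ is $(a^2+eb^2,2ab)$, so $\mu_e=2eab/(a^2+eb^2)$ and the inequality $a/b<\mu_e$ reduces to $a^2<eb^2$, true since $a^2=eb^2-1$. For $(\mathrm{ii})\Rightarrow(\mathrm{iii})$ I would apply Proposition~\ref{thh} with $n=1$ (hence $\gamma=1$, as $b$ is odd): it gives that $(S^{[2]},bL_2-a\delta)$ is a very general point of an irreducible component of $\cC_{2,2e}^{(1)}$, a polarized fourfold of degree $2$ that is special of discriminant $2e$. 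Then I would check that $(\mathrm{ii})$ together with $e\ge3$ forces $e\notin\{4,5\}$ ($\cP_4(-1)$ is unsolvable, and for $e=5$ the minimal solution $(2,1)$ of $\cP_5(-1)$ has slope $2=\nu_5$), hence $2e\notin\{2,4,8,10\}$; so by O'Grady's description of the image of $\cU_2^{(1)}$ (\cite{og6,dims}; see Example~\ref{qqq1}), a general point of every component of $\cD_{2,2e}^{(1)}$—in particular ours—corresponds to a smooth double EPW sextic. The implication $(\mathrm{iii})\Rightarrow(\mathrm{iv})$ is immediate: the covering involution of the double cover $X\to Y\subset\P^5$ is a non‑trivial automorphism of $X\cong S^{[2]}$.

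The implication $(\mathrm{iv})\Rightarrow(\mathrm i)$ is the heart of the matter and is essentially Theorem~\ref{bbb}. Here I would argue that a non‑trivial automorphism acts non‑trivially on $\Pic(S^{[2]})$ (automorphisms of fourfolds of $\KKK^{[2]}$‑type act faithfully on $H^2$) and preserves $\Nef(S^{[2]})$, hence must swap its two extremal rays; writing $D$ for the primitive generator of $\R_{\ge0}(L_2-\nu_e\delta)$, this forces $D^2=L_2^2=2e$, which is incompatible with $e$ being a perfect square (then $D^2=0$) and with $\cP_{4e}(5)$ being solvable (then $D^2\ne 2e$, by a direct computation). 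So $\nu_e=\mu_e$ and $D=a_1L_2-eb_1\delta$ with $(a_1,b_1)$ the minimal solution of $\cP_e(1)$; the unique isometry $\phi^*$ with $L_2\leftrightarrow D$ then satisfies $\phi^*\delta=b_1L_2-a_1\delta$, and by Verbitsky's Torelli theorem it comes from an automorphism iff it extends to a Hodge isometry of $H^2(S^{[2]},\Z)$, which a gluing computation with the transcendental lattice reduces to the congruences $a_1\equiv\pm1\pmod{2e}$ and $2\mid b_1$ on $D(\Pic(S^{[2]}))\isom\Z/2e\Z\times\Z/2\Z$. Finally I would check by an elementary Pell‑equation argument that these are equivalent to solvability of $\cP_e(-1)$: from a minimal solution $(a_{-1},b_{-1})$ one computes $a_1+1=2eb_{-1}^2$ and $b_1=2a_{-1}b_{-1}$; conversely $a_1=\pm1+2ek$ with $2\mid b_1$ gives $(b_1/2)^2=k(ek\pm1)$ with coprime factors, hence $t^2-es^2=\pm1$, and the sign $+1$ would contradict minimality of $(a_1,b_1)$.

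For the concluding statement, when the conditions hold $\Aut(S^{[2]})=\{1,\sigma\}$ with $\sigma$ an involution (Theorem~\ref{bbb}, or directly $(\phi^*)^2=\Id$ from the formulas above). Since $\nu_e=\mu_e$, the minimal solution $(a,b)$ of $\cP_e(-1)$ is the \emph{only} positive one with $a/b<\nu_e$, so $H:=bL_2-a\delta$ is the unique ample class of square $2$ and divisibility $1$ on $S^{[2]}$; a short computation with $\sigma^*L_2=a_1L_2-eb_1\delta$, $\sigma^*\delta=b_1L_2-a_1\delta$, $a_1=a^2+eb^2$, $b_1=2ab$ and $a^2-eb^2=-1$ gives $\sigma^*H=H$. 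Under the isomorphism of $(\mathrm{iii})$, $H$ must therefore be the polarization of the double EPW sextic, so the morphism defined by $|H|$ is $\sigma$‑invariant and factors as $S^{[2]}\thra S^{[2]}/\sigma=Y\hra\P^5$ onto the EPW sextic $Y$. The step I expect to be hardest is $(\mathrm{iv})\Rightarrow(\mathrm i)$—the lattice criterion for realizing the ray‑swap by an automorphism together with the Pell translation, i.e.\ Theorem~\ref{bbb} itself—while a secondary point requiring care is, in $(\mathrm{ii})\Rightarrow(\mathrm{iii})$, landing in a ``good'' component of $\cD_{2,2e}^{(1)}$, which is precisely why $(\mathrm{ii})$ must be seen to fail for $e\in\{4,5\}$ and why the hypothesis $e\ge3$ is needed.
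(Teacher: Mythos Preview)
Your proposal is correct and close in spirit to the paper's proof, but the organization and one implication differ in a noteworthy way. The paper runs the cycle $(\mathrm{iv})\Rightarrow(\mathrm{ii})\Rightarrow(\mathrm{iii})\Rightarrow(\mathrm{iv})$ and invokes Theorem~\ref{bbb} as a black box for $(\mathrm{i})\Leftrightarrow(\mathrm{iv})$; in particular its $(\mathrm{iv})\Rightarrow(\mathrm{ii})$ is geometric: the class $b_{-1}L_2-a_{-1}\delta$ is positively proportional to $L_2+\sigma^*L_2$, hence ample, which immediately gives the inequality $a_{-1}/b_{-1}<\nu_e$. You instead prove $(\mathrm{i})\Rightarrow(\mathrm{ii})$ by a direct Pell computation (squaring the minimal solution of $\cP_e(-1)$ to get that of $\cP_e(1)$ and reducing to $a^2<eb^2$), and for $(\mathrm{iv})\Rightarrow(\mathrm{i})$ you essentially reprove the relevant direction of Theorem~\ref{bbb} inline. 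What you gain is a self-contained argument that does not rely on~\cite{bcs}; what the paper gains is brevity and a cleaner geometric reason why the square-$2$ class is ample. One small point to tighten in your $(\mathrm{iv})\Rightarrow(\mathrm{i})$: faithfulness on $H^2$ alone does not give non-triviality on $\Pic(S^{[2]})$---you should also remark that $\Id_{\Pic}\oplus(-\Id_{\Pic^\perp})$ does not extend to $H^2(S^{[2]},\Z)$ when $e>1$ (compare the proof of Proposition~\ref{prop27}). Your handling of $(\mathrm{ii})\Rightarrow(\mathrm{iii})$, including the check that $(\mathrm{ii})$ fails for $e\in\{4,5\}$ so that one lands in components met by $\cU_2^{(1)}$, matches the paper's use of Proposition~\ref{thh} and Example~\ref{qqq1}.
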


\begin{proof}
The equivalence   (i) $\Leftrightarrow$ (iv) is Theorem~\ref{bbb}.\
 The implication (iv) $\Rightarrow$ (ii) comes from the facts that the equation $\cP_e(-1)$ has a minimal solution $(a_{-1},b_{-1})$ and, if $\sigma$ is the non-trivial automorphism of $S^{[2]}$ (Theorem~\ref{bbb}),  the class $b_{-1}L_2-a_{-1}\delta$ is positively proportional to $ L_2+\sigma^*L_2 $, hence  ample.\
 The implication (ii) $\Rightarrow$ (iii) is Proposition~\ref{thh} and Example~\ref{qqq1}.\ The implication (iii) $\Rightarrow$ (iv) is obvious.\  The consequences stated at the end follow from~\cite[Section~4]{og7}, which explains why $\dim(|H|)=5$, where $H$ is the canonical polarization on  
the double EPW sextic.\end{proof}

\begin{rema}\label{rema54}  When $e=2$, all the conditions of Corollary~\ref{cor53} hold  except for (iii).\ The fourfold   $S^{[2]}$ carries the non-trivial Beauville involution $\sigma$ (Example~\ref{sma})  and the  complete linear system $| L_2- \delta|$ defines a morphism which  factors as 
 $S^{[2]}\thra S^{[2]}/\sigma\stackrel{3:1}{\thra} \Gr(2,4)\hra \P^5$.\  This        fits with the fact that $3\Gr(2,4)$ is a (degenerate) EPW sextic (\cite[Claim~2.14]{og3}).  \end{rema} 
 
\begin{exam}\label{rema54b} When $e=13$,   the equivalent conditions of Corollary~\ref{cor53} are satisfied, hence  the Hilbert square  of a general polarized K3 surface $(S,L)$ of degree 26 is a double EPW sextic, with canonical involution $\sigma$.\ Moreover,
  two positive solutions, $ (7,2)$ and $(137,38)$, of  the equation $\cP_{  13}(-3)$ satisfy the conditions \eqref{nue} of Proposition~\ref{thh} with $b$ even.\  It follows that $S^{[2]}$ is also isomorphic to a general element of $\cC_{6,26}^{(2)}$, \ie, to the variety of lines $F(W)$ on a special cubic hypersurface $W\subset \P^5$ of discriminant $26$ (the two isomorphisms $S^{[2]}\isom F(W)$ differ by $\sigma$, and $\sigma^*(2L_2-7\delta)=38L_2-137\delta$).
  \end{exam}

We now show that given any positive integer $n$, Proposition~\ref{thh} applies to infinitely many integers $e$.

\begin{prop}\label{coro43}
Let $n$ be a positive integer.\ There are  infinitely many distinct hypersurfaces in the moduli spaces $\cM_{2n}^{(1)}$, and $\cM_{2n}^{(2)}$ if $n\equiv -1\pmod4$, whose general points correspond to Hilbert squares of  K3 surfaces.\ In both cases, the union of these hypersurfaces is dense in the moduli space for the euclidean topology.
\end{prop}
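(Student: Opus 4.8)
The plan is to exhibit, for each fixed $n$, an infinite family of integers $e$ to which Proposition~\ref{thh} applies, and then show the corresponding Heegner divisors accumulate everywhere in the period space. The cleanest source of such $e$ is Remark~\ref{pesq}: when $e=k^2$ is a perfect square, every positive solution $(a,b)$ of $\cP_e(-n)$ with $\gcd(a,b)=1$ automatically satisfies $a/b<\sqrt e=\nu_e$, so the only thing to arrange is \emph{solvability} of $\cP_{e}(-n)$, i.e. writing $n=n'n''$ with $0<n''<n'$ and $b\sqrt e=\tfrac12(n'+n'')$, $a=\tfrac12(n'-n'')$. First I would note that for \emph{any} $n>1$ one can take $n'=n$, $n''=1$, giving $a=\tfrac{n-1}{2}$, $b\sqrt e=\tfrac{n+1}{2}$ — but to get infinitely many $e$ I instead scale: replace $(S,L)$ of degree $2e$ by viewing $k$ as a free parameter. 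Concretely, fix a factorization-free device: for each integer $m\ge 1$ set $n'=n$, $n''=1$ as before but multiply through — the honest way is to observe that $\cP_{e}(-n)$ is solvable with $e=k^2$ whenever $k\mid \tfrac{n+1}{2}$ (odd case) after rescaling, or more robustly, to pick $e$ of the form $e=\bigl(\tfrac{n+1}{2}\bigr)^2 t^2$ for $t\in\Z_{>0}$: then $(a,b)=\bigl(\tfrac{n-1}{2},\,\tfrac1t\cdot\text{(something)}\bigr)$ — so actually the cleanest choice is $e = j^2$ with $\tfrac{n+1}{2}\equiv 0 \pmod{j}$... I would tidy this by just taking, for every $t\ge 1$, the integer $n = n'n''$ with $n'=n$, $n''=1$ \emph{and} then noting $\cP_{t^2(\frac{n+1}{2})^2\,/\,\gcd^2}(-n)$... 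The point I actually need: there are infinitely many perfect squares $e$ for which $\cP_e(-n)$ has a solution with $\gcd(a,b)=1$, and this is elementary — e.g. for odd $n>1$ take $e=\bigl(\tfrac{n+1}{2}\bigr)^2$, $\bigl(\tfrac{9(n+1)}{2}\bigr)^2$-type families, or simply invoke that once $\cP_{e_0}(-n)$ is solvable, $\cP_{e_0 s^2}(-n)$ is solvable for all $s$ by scaling $b$, and primitivity can be restored. For even $n$ one argues similarly with the second bullet of Remark~\ref{pesq} replaced by a direct construction, or reduces to $\gamma=1$.

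Second, for each such $e$ Proposition~\ref{thh} produces a component of $\cC_{2n,2e}^{(\gamma)}$ birational to $\cK_{2e}$, hence a genuine Heegner divisor $\cD_{2n,2e}^{(\gamma)}\subset\cP_{2n}^{(\gamma)}$ in the image of the period map whose general point is a Hilbert square; since the discriminants $2e$ are pairwise distinct these are infinitely many distinct hypersurfaces. This handles the first assertion. I must be slightly careful that $\gamma$ comes out equal to the prescribed value: $\gamma=2$ iff $b$ is even, so to populate $\cM_{2n}^{(2)}$ (when $n\equiv-1\pmod 4$) I need solutions with $b$ even, and to populate $\cM_{2n}^{(1)}$ I need some with $b$ odd — both are easily arranged inside the perfect-square family by choosing the parity of the scaling parameter, since scaling $(a,b)\mapsto(a,b)$ under $e\mapsto e s^2$, $b\mapsto bs$ changes the parity of $b$ freely (after dividing out any common factor of $a$, which stays fixed).

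Third, density. This is the standard Noether–Lefschetz density argument. The period domain $\Omega_{h_0}$ is an open subset of a quadric in $\P(\L^{(\gamma)}_{\KKK^{[2]},2n}\otimes\C)$ of dimension $20$. A Heegner divisor $\cD_{2n,K}^{(\gamma)}$ is the image of $\P(K^\perp\otimes\C)\cap\Omega_{h_0}$, and the union over all $K$ (of appropriate discriminant type) is dense because: given a point $x\in\Omega_{h_0}$ and a neighbourhood $V$, the real $2$-plane $P_x=\Span_\R(\mathrm{Re}\,x,\mathrm{Im}\,x)\subset\L_{\KKK^{[2]}}\otimes\R$ is positive definite, and one can find a rational positive-definite $2$-plane $P'$ arbitrarily close to $P_x$ that, together with $h_0$, spans a rank-$3$ rational subspace; intersecting with $\L_{\KKK^{[2]}}$ gives a rank-$2$ lattice $K\ni h_0$ of signature $(1,1)$, and the corresponding $\P(K^\perp\otimes\C)$ meets $V$. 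One then has to ensure that among the lattices $K$ so produced, infinitely many (and ones hitting every $V$) have discriminant of the special form $-2e$ with $e$ a perfect square in our family — this follows because the set of admissible discriminants is infinite and the approximation is flexible enough to prescribe $\disc(K^\perp)$ to lie in any fixed infinite arithmetic-progression-like set; more precisely one uses Proposition~\ref{propirr}(1)(a)/(2)(a) to see which $d$ occur and that our family of $e$'s meets the occurring ones. The main obstacle is exactly this coordination step: producing, near an arbitrary period point, Heegner divisors whose discriminant is constrained to the specific (thin but infinite) set of $e$'s for which Proposition~\ref{thh} applies and yields the desired $\gamma$. I expect to handle it by first establishing density of $\bigcup_{e\ \square}\cD_{2n,2e}^{(\gamma)}$ using that perfect squares have positive density among admissible discriminants in the relevant congruence classes, then refining to the subfamily where additionally a primitive solution of $\cP_e(-n)$ of the right parity exists — which, by Remark~\ref{pesq}, is automatic once $e$ is a perfect square for which $\cP_e(-n)$ is solvable at all, a condition stable under the scalings used in the approximation. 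The euclidean-density statement then follows since the quotient map $\Omega_{h_0}\to\cP_{2n}^{(\gamma)}$ is open and the period map is an open embedding by Theorem~\ref{tor1}.
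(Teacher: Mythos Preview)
Your construction of infinitely many admissible $e$ via perfect squares does not work. If $e=k^2$ then $a^2-k^2b^2=-n$ factors as $(kb-a)(kb+a)=n$, so $kb=\tfrac12(n'+n'')$ for some factorization $n=n'n''$. For fixed $n$ this determines $kb$ up to finitely many choices, hence only finitely many values of $k$ (and of $e=k^2$) arise. Your attempted fix by ``scaling'' is also wrong: if $a^2-e_0b^2=-n$ then $a^2-(e_0s^2)(b')^2=-n$ forces $b'=b/s$, which is integral only for $s\mid b$; there is no way to produce solutions of $\cP_{e_0s^2}(-n)$ from one of $\cP_{e_0}(-n)$ for general $s$. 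So as written, you have not exhibited infinitely many hypersurfaces in either $\cM_{2n}^{(1)}$ or $\cM_{2n}^{(2)}$.

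The paper's argument avoids perfect squares entirely and is much simpler. For $\gamma=1$ take $e=m^2+n$ with solution $(a,b)=(m,1)$ (so $b$ odd); for $\gamma=2$ (when $n\equiv -1\pmod4$) take $e=m^2+m+\tfrac{n+1}{4}$ with solution $(a,b)=(2m+1,2)$ (so $b$ even). In each family the inequality $a/b<\nu_e$ is checked directly and holds for all but one small $(n,m)$, and $m$ ranges over infinitely many values. For density the paper does not attempt a direct approximation argument: it invokes the equidistribution theorem of Clozel--Ullmo, which says that \emph{any} infinite union of Heegner divisors in $\cP_{2n}^{(\gamma)}$ is already dense. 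Your proposed density step---approximating an arbitrary period point by a rational one while forcing the discriminant to land in a prescribed thin set---is exactly the kind of statement Clozel--Ullmo is designed to supply, and you would essentially have to reprove something of comparable strength to carry it out by hand.
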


\begin{proof}[Sketch of proof]
When $m>0$, the pair $(m,1)$ is a solution of the equation $\cP_{e}(-n)$, with $e=m^2+n$, and one easily checks   that the inequality  $m<\nu_e$ holds when $(n,m)\ne (1,2)$.
 
When $m\ge 0$, the pair $(2m+1,2)$ is a solution of the equation $\cP_{e}(-n)$, with $e=m^2+m+\tfrac{n+1}{4}$ and   one easily checks that the inequality $m+\tfrac12<\nu_e $ holds when $(n,m)\ne (3,1)$.
 
Finally, the density statement follows from a powerful result of Clozel and Ullmo (Theorem~\ref{thcu} below).
 \end{proof}
 
 \begin{theo}[Clozel--Ullmo]\label{thcu}
The union of infinitely many Heegner divisors in any moduli space $\cM^{(\gamma)}_{2n}$ is dense for the euclidean topology. 
\end{theo}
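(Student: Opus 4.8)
The plan is to reduce the statement to the equidistribution theorem of Clozel and Ullmo for \emph{strongly special} subvarieties of a Shimura variety; essentially all the work then lies in fitting $\cP^{(\gamma)}_{2n}$ and its Heegner divisors into that framework and checking the strong-specialness hypothesis. First I would pass to the period space: the period map is an open embedding (Theorem~\ref{tor1}) and, by Theorem~\ref{imper}, all but finitely many Heegner divisors of $\cP^{(\gamma)}_{2n}$ meet the image $\wp^{(\gamma)}_{2n}(\cM^{(\gamma)}_{2n})$, so an infinite family of Heegner divisors in $\cM^{(\gamma)}_{2n}$ gives rise to an infinite family $(Z_i)_{i\ge1}$ of pairwise distinct Heegner divisors in $\cP^{(\gamma)}_{2n}$, and it is enough to prove that $\bigcup_i Z_i$ is dense in $\cP^{(\gamma)}_{2n}$.

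Next I would record the standard group-theoretic picture. Write $V:=\L^{(\gamma)}_{\KKK^{[2]},2n}\otimes\Q$, a non-degenerate quadratic $\Q$-space of signature $(2,20)$, and $G:=\SO(V)$, an absolutely almost simple $\Q$-group with $G(\R)\isom\SO(2,20)$; then $\Omega_{h_0}$ is a connected component of the Hermitian symmetric space of $G(\R)$ and $\cP^{(\gamma)}_{2n}$ is an arithmetic quotient of $\Omega_{h_0}$, which is exactly the setting of the Clozel--Ullmo theorem. Each Heegner divisor $\cD^{(\gamma)}_{2n,K}$ is the image of the sub-domain $\Omega_N\subset\Omega_{h_0}$ cut out by $\P(N\otimes\C)$, where $N:=K^\perp$ is the orthogonal of a rank-$2$, signature-$(1,1)$ primitive sublattice $K\ni h_0$; here $N\subset h_0^\perp$ has corank~$1$ and signature $(2,19)$, $\Omega_N$ is the Hermitian symmetric space of the subgroup $G_N:=\SO(N\otimes\Q)\hookrightarrow G$, and $\cD^{(\gamma)}_{2n,K}$ is the associated special subvariety (irreducible, of codimension~$1$).

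The key check is that these special subvarieties are strongly special, i.e.\ that $G_N$ is semisimple with no $\Q$-simple factor whose real points are compact. Since $N\otimes\Q$ is non-degenerate of rank $21\ge 5$ and signature $(2,19)$, its special orthogonal group $G_N$ is again absolutely almost simple over $\Q$ with $G_N(\R)\isom\SO(2,19)$, which is non-compact; hence $G_N$ has no compact factors and $\cD^{(\gamma)}_{2n,K}$ is strongly special. (As for orthogonal Shimura varieties of signature $(2,m)$ with $m\ge3$ in general, one also notes that the generic Mumford--Tate group along $\Omega_N$ is all of $G_N$, so $\Omega_N$ genuinely defines a sub-Shimura datum.)

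Finally I would invoke the Clozel--Ullmo theorem --- whose proof rests on Ratner's measure-classification theorem together with the Dani--Margulis linearization method and the uniform recurrence estimates of Mozes--Shah and Eskin--Mozes--Shah --- in the form: for a sequence of pairwise distinct strongly special subvarieties $(Z_i)_{i\ge1}$ of $\cP^{(\gamma)}_{2n}$, some subsequence of the canonical probability measures $\mu_{Z_i}$ converges weakly to the canonical probability measure $\mu_Z$ of the smallest special subvariety $Z\subset\cP^{(\gamma)}_{2n}$ containing $Z_i$ for infinitely many~$i$. Since an irreducible special subvariety containing two distinct irreducible divisors among the $Z_i$ must have the full dimension~$20$, we obtain $Z=\cP^{(\gamma)}_{2n}$, so $\mu_Z$ is the invariant probability measure, which has full support. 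Then for any non-empty open $U\subset\cP^{(\gamma)}_{2n}$ one has $\mu_Z(U)>0$, hence $\liminf_i\mu_{Z_i}(U)\ge\mu_Z(U)>0$ by weak convergence, so $Z_i\cap U\ne\varnothing$ for all large~$i$ and $\bigcup_i Z_i$ is dense. Granting the ergodic-theoretic input, which is quoted as a black box, the main obstacle is precisely the bookkeeping above that certifies Heegner divisors as strongly special sub-Shimura varieties; for orthogonal Shimura varieties of signature $(2,m)$ with $m\ge3$ this is routine, so the reduction is clean.
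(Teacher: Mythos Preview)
Your proof is correct and follows essentially the same route as the paper's: both recognize the period space as (an arithmetic quotient arising from) a Shimura variety for an orthogonal group, identify Heegner divisors as strongly special subvarieties, invoke the Clozel--Ullmo equidistribution theorem to get weak convergence of the canonical measures along a subsequence to $\mu_Z$ for some strongly special $Z$, and conclude by a dimension count that $Z$ is the whole space. Your write-up is simply more explicit than the paper's (you spell out the group-theoretic picture, the verification that $\SO(N\otimes\Q)$ has no compact $\Q$-factor, and the weak-convergence step), but the argument is the same.
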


\begin{proof}
This follows from the main result of~\cite{clul}: the space $\cM^{(\gamma)}_{2n}$  is a  Shimura variety and   each Heegner divisor $\cD_x$ is a ``strongly special'' subvariety, hence is endowed with a canonical probability measure $\mu_{\cD_x}$.\ Given any infinite family $(\cD_{x_a})_{a\in\N}$ of Heegner divisors, there exists a subsequence $(a_k)_{k\in\N}$, a  strongly special subvariety $Z \subset \cM^{(\gamma)}_{2n}$ which contains  $\cD_{x_{a_k}}$ for all $k\gg 0$ such that $(\mu_{\cD_{x_{a_k}}})_{k\in\N}$ converges weakly to $\mu_Z$ (\cite[th.~1.2]{clul}).\ For dimensional reasons, we have $Z= \cM^{(\gamma)}_{2n}$; this implies that $\bigcup_a \cD_{x_a}$ is dense in    
$ \cM^{(\gamma)}_{2n}$.
\end{proof}

\begin{rema}
It was proved in~\cite{mm} that Hilbert schemes of projective K3 surfaces are dense in the coarse moduli space of all (possibly non-algebraic) \hKm s  of $\KKK^{[n]}$-type.
\end{rema}

\subsection{Isomorphisms between various special hyperk\"ahler fourfolds}\label{sec72}
We now apply a similar construction with the polarized hyperk\"ahler fourfolds $(X,H)$ studied in Example~\ref{ex:NefConesFano}, whose notation we keep.\ For the sake of simplicity, we assume that $n$ is square-free; these fourfolds then correspond  to points of the irreducible hypersurface $\cC_{2n,2e'n}^{(2)}$.

\begin{prop} \label{thhw}
Let $n$, $m$, and $e$ be positive integers.\ Assume that $n$ is square-free, $n\equiv -1\pmod4$,   $n\mid e$, and $n\ne e$.\
Assume further that the equation $\cP_{e}(nm)$ has a solution $(na,b)$ with $a>0$ that satisfies the conditions
\begin{equation}\label{nuep}
\frac{|b|}{a}<\nu_{n,e}\qquad and \qquad \gcd(a,b)=1.
\end{equation}
There is a  rational map
\begin{eqnarray*}
\varpi\colon \cC_{2n,2e}^{(2)} &\dra& \cM_{2m}^{(\gamma)}\\
(X,H)&\longmapsto&(X,aH+bL),
\end{eqnarray*}
where $\gamma=2$ if $b$ is even, and $\gamma=1$ if $b$ is odd.\ This map  
induces a birational isomorphism onto an irreducible component of $\cC_{2m,2e}^{(\gamma)}$.
\end{prop}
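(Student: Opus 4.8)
The plan is to mimic closely the proof of Proposition~\ref{thh}, replacing the Hilbert square $S^{[2]}$ with the fourfold $X$ from Example~\ref{ex:NefConesFano} and the K3 moduli space $\cK_{2e}$ with the hypersurface $\cC_{2n,2e}^{(2)}$. First I would check that the map $\varpi$ is well defined on a very general point of $\cC_{2n,2e}^{(2)}$: for such $(X,H)$ with $\Pic(X)=\Z H\oplus\Z L$ and intersection matrix $\left(\begin{smallmatrix}2n&0\\0&-2e'\end{smallmatrix}\right)$ (with $e=e'n$), the class $H':=aH+bL$ has square $2n a^2-2e'b^2 = 2(na^2 - (e/n)b^2)$; using the relation $n^2a^2 - eb^2 = nm$ coming from the equation $\cP_e(nm)$ with solution $(na,b)$, a division by $n$ gives $q_X(H')=2m$, so $H'$ has the correct square. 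Its divisibility is $2$ if $b$ is even and $1$ if $b$ is odd (since $H$ has divisibility $2$ and $L$ divisibility $1$, and $\gcd(a,b)=1$ guarantees primitivity). The inequality $|b|/a<\nu_{n,e}$ in~\eqref{nuep}, together with the description of the nef cone in Example~\ref{ex:NefConesFano}(b), shows $aH+bL$ lies in $\Nef(X)$; one must check it is strictly inside, i.e.\ ample, which follows from $|b|/a$ being strictly less than the slope of the extremal ray. Hence $(X,aH+bL)$ is a polarized hyperk\"ahler fourfold of $\KKK^{[2]}$-type of degree $2m$ and divisibility $\gamma$, and since $K^\perp$ is unchanged (the sublattice $\Z H\oplus\Z L\subset\Pic(X)$ is the same, just with a different chosen polarization inside it), its period point lies in $\cC_{2m,2e}^{(\gamma)}$.

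Next I would establish generic injectivity, following the argument in Proposition~\ref{thh}. Suppose there is an isomorphism $\phi\colon X\isomto X''$ between two such fourfolds with $\phi^*(aH''+bL'')=aH+bL$, but $(X,H)$ and $(X'',H'')$ not isomorphic as polarized fourfolds. As in Proposition~\ref{thh}, one checks that this forces $\phi^*H''\ne H$, and that the extremal rays of $\Nef(X)$ must be spanned by $H-\nu_{n,e}L$ and $H+\nu_{n,e}L$ and are interchanged (up to the other chosen class) by $\phi$ composed with the natural identifications; comparing with Example~\ref{ex:NefConesFano}(b) one deduces that the equation $\cP_{4e}(-5n)$ is not solvable, that $\nu_{n,e}=\mu_{n,e}$, and that $X$ carries a non-trivial automorphism $\sigma$ exchanging the two extremal rays of the movable (hence nef) cone. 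Then $\phi\circ\sigma$ fixes the polarization class $H$ and, being an isometry of $\Pic(X)$ fixing $H$, extends to an automorphism of the whole Hodge structure preserving $h_0$; by Verbitsky's Torelli theorem (Theorem~\ref{tor1}) and the fact that $\sigma$ acts by $-1$ on the transcendental lattice only in the expected way, $\phi\circ\sigma$ is induced by an isomorphism $(X,H)\isomto(X'',H'')$, contradicting our hypothesis. (This step relies on the analysis of automorphisms of Picard-rank-$2$ fourfolds carried out in the appendices — in particular the analogue of Theorem~\ref{bbb} for divisibility-$2$ polarizations — which is why those appendices are cited as prerequisites.)

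With $\varpi$ well defined and generically injective on the irreducible, $19$-dimensional variety $\cC_{2n,2e}^{(2)}$, its image is an irreducible $19$-dimensional subvariety of $\cM_{2m}^{(\gamma)}$, contained in $\cC_{2m,2e}^{(\gamma)}$, which is pure of codimension~$1$, hence of dimension~$19$; therefore the image is one of its irreducible components and $\varpi$ is a birational isomorphism onto that component. I expect the main obstacle to be the generic-injectivity step, specifically ruling out the "unexpected" isomorphism by invoking the automorphism theory: one must verify carefully that whenever the two relevant extremal rays of the nef cone coincide under an isometry, that isometry (or its composition with $\sigma$) is actually realized geometrically — this is exactly where the Picard-rank-$2$ automorphism computations of the appendices are essential, and where the hypotheses $n$ square-free, $n\equiv-1\pmod4$, and $n\ne e$ are used to pin down the lattice-theoretic situation (irreducibility of $\cC_{2n,2e}^{(2)}$ via Proposition~\ref{propirr}(2)(b), and existence of the fourfold via Theorem~\ref{imper}, which needs $e'=e/n>1$, i.e.\ $n\ne e$).
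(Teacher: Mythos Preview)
Your setup (square, divisibility, primitivity, ampleness of $aH+bL$, and the observation that $K^\perp$ is unchanged) is correct and matches the paper. The gap is in the generic-injectivity step.

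You try to carry over the extremal-ray argument of Proposition~\ref{thh}, but the geometry is different: here $\Nef(X)$ is symmetric about $\R_{>0}H$, and there is no distinguished extremal ray playing the role of $L_2$. More seriously, the involution you invoke---one ``exchanging the two extremal rays of the movable (hence nef) cone''---would act on $\Pic(X)$ as $H\mapsto H$, $L\mapsto -L$, and the computations in the proof of Proposition~\ref{autfw*} show explicitly that $\left(\begin{smallmatrix}1&0\\0&-1\end{smallmatrix}\right)$ \emph{never} extends to an isometry of $H^2(X,\Z)$. So the $\sigma$ you describe does not exist, and your composition $\phi\circ\sigma$ cannot be formed.

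The paper proceeds differently. Writing $\phi^*$ in the bases $(H'',L'')$ and $(H,L)$ gives a non-trivial isometry of the rank-$2$ lattice fixing the vector $(a,b)$; any such isometry is the reflection with axis $\R(a,b)$. The lattice analysis in the proof of Proposition~\ref{autfw*} (case $\alpha=-1$) then shows that a reflection extends to $H^2(X,\Z)$ only when its axis is spanned by a class $rH+sL$ with $(nr,s)$ a solution of $\cP_e(n)$---in particular a class of square~$2$. Since $aH+bL$ is primitive of square $2m$, this forces $m=1$ and $(a,b)=(r,s)$. Only under these constraints does Proposition~\ref{autfw*} furnish a biregular involution $\sigma$ whose action on $\Pic(X)$ is \emph{that same reflection}, so that $(\phi\circ\sigma)^*H''=H$. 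You are missing both the identification of $\phi^*$ as a reflection and the crucial consequence that failure of injectivity can occur only when $m=1$.
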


In the proposition, the locus $\cC_{2n,2e}^{(2)}$ is non-empty and irreducible by Proposition~\ref{propirr} and Theorem~\ref{imper}.

\begin{proof}
The proof is the same as that of  Proposition~\ref{thh} and is based on the fact that if $(X,H)$ corresponds to  a very general point of $\cC_{2n,2e}^{(2)}$, the class $ aH+bL$ is primitive, has square $2a^2n-2b^2e'=2m$ and divisibility~$\gamma$, and is  ample on $X$ because of  the inequality in~\eqref{nuep}.\  Therefore, the pair $(X,aH+bL)$ corresponds to a point of $\cC_{2m,2e}^{(\gamma)}$.

To prove   that $\varpi$ is generically injective,   assume that there is an isomorphism $\phi\colon X\isomto X'$ such that $\phi^*(aH'+bL')=aH+bL$.\ If $\phi^*H'\ne H$,   the matrix of $\phi^*$ in the bases $(H',L')$ and $(H,L)$ is that of a non-trivial isometry with a fixed vector, hence a reflection.\  

As we will see during the proof of Proposition~\ref{autfw*}, the matrix of such an isometry that extends to an isometry between $H^2(X',\Z)$ and $H^2(X,\Z)$ must be of the form $\bigl(\begin{smallmatrix} 2s^2e'+1&-2e'rs\\2nrs&-(2s^2e'+1) \end{smallmatrix}\bigr)$, where  $(nr,s)$ is a solution to the equation $\cP_e(n)$, both equations $\cP_{e}(-n)$ and $\cP_{4e}(-5n)$ are not solvable, and $e$ is not a perfect square.\  We then have $\phi^*(rH'+sL')= rH+sL$.\ Since $\phi^*(aH'+bL')=aH+bL$ and $ aH'+bL' $ is primitive, we must have $m=1$ (and $a=r$, $b=s$).\ In that case, by Proposition~\ref{autfw*}, $X$ does have an involution $\sigma$ that acts as on $\Pic(X)$ as the reflection with axis spanned by $rH+sL$.\ The isomorphism $  \phi\circ \sigma\colon   X\isomto X'$ then pulls back $H'$ to $H$.\ This proves the proposition.
  \end{proof}
  
\begin{exam}\label{ex:mprimebeven}
Under the assumptions of Proposition~\ref{thhw}, when $m$ is prime and $b$ is even, the locus $\cC_{2m,2e}^{(2)}$ is irreducible by Proposition~\ref{propirr}.\  Therefore, there is a birational isomorphism $\cC_{2n,2e}^{(2)}\isomdra\cC_{2m,2e}^{(2)}$.
\end{exam}

\begin{exam}
Assume $m=1$.\ As in Example~\ref{ex:HilbEPW}, we have, under the assumptions of Proposition~\ref{thhw}, birational isomorphisms
\begin{eqnarray*}
\varpi\colon \cC_{2n,2e}^{(2)} &\isomdra& \begin{cases}\cC_{2,2e}^{(1)}&\textnormal{ if $e$ is even;}  \\
\cC_{2,2e}^{(1)\prime\prime}&\textnormal{ if $e$ is odd.} 
\end{cases}
\end{eqnarray*}
\end{exam}

\begin{rema}\label{invo}  
Given a pair $(a,b)$ that satisfies the conditions~\eqref{nuep}, we can construct two maps $\varpi^\pm$ by sending $(X,H)$  either to $(X,aH+bL)$ or to $(X,aH-bL)$.\ These two maps are distinct unless there exists an  automorphism $\phi$ of $X$ that sends $aH+bL$ to $aH-bL$.\ One checks using the computations of the proof of Proposition~\ref{autfw*} that this is only possible when we are in case (a) of that proposition,  $\phi^*$   acts as a rotation $\bigl(\begin{smallmatrix} 2s^2e+1& 2e'rs\\2nrs& 2s^2e +1  \end{smallmatrix}\bigr)$ on $\Pic(X)$, with $r^2-es^2=1$, and moreover, $a=r$, $b=ns$, and $m=n$.\ The maps $\varpi^\pm\colon \cC_{2n,2e}^{(2)}\isomlra\cC_{2n,2e}^{(2)}$ then correspond to changing the polarization by an automorphism of $X$: they are just particular cases of an infinite family of such maps.
 \end{rema}

As in Section~\ref{sec72}, we characterize which of our special hyperk\"ahler fourfolds   are  isomorphic to double EPW sextics.

\begin{coro}\label{thhepw}
Let $n$ and $e$ be positive integers.\ Assume that $n$ is square-free, $n\equiv -1\pmod4$,   $n\mid e$, and $n\ne e$.\ Let $(X,H)$ be a polarized hyperk\"ahler fourfold corresponding to a general point of $\cC_{2n,2e}^{(2)}$.\  Then $X$ is isomorphic to a double EPW sextic if and only if the equation $\cP_{e}(n)$ is solvable but the equation $\cP_{4e}(-5n)$ is not.
\end{coro}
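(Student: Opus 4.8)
The plan is to reduce the statement to a combination of Corollary~\ref{cor53} and Proposition~\ref{thhw}, using the chain of birational isomorphisms between special loci that these results provide. First I would recall that, by Theorem~\ref{imper} and Proposition~\ref{propirr}(2)(b), the locus $\cC_{2n,2e}^{(2)}$ is non-empty and irreducible under the stated hypotheses ($n$ square-free, $n\equiv-1\pmod4$, $n\mid e$, $n\ne e$), and that its general point corresponds to a polarized fourfold $(X,H)$ with $\Pic(X)=\Z H\oplus\Z L$ of intersection matrix $\bigl(\begin{smallmatrix}2n&0\\0&-2e'\end{smallmatrix}\bigr)$, $e=e'n$, as in Example~\ref{ex:NefConesFano}. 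The key observation is that whether such an $X$ is isomorphic to a double EPW sextic is a condition on the birational class of $X$ together with the requirement that the appropriate degree-$2$, divisibility-$1$ polarization be \emph{ample}; both are controlled by the nef and movable cones computed in Example~\ref{ex:NefConesFano}.

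The main step is to show that $X$ is birational to the Hilbert square $S^{[2]}$ of some K3 surface $S$ \emph{and} that $S^{[2]}$ satisfies the equivalent conditions of Corollary~\ref{cor53}, precisely when $\cP_e(n)$ is solvable and $\cP_{4e}(-5n)$ is not. For the forward direction: if $X$ is a double EPW sextic, then in particular $X$ has a non-trivial involution (the O'Grady duality involution, see Example~\ref{qqq1}), and by the description of $\Aut(X)$ and $\Bir(X)$ in Proposition~\ref{autfw*} (referenced from the proof of Proposition~\ref{thhw}), the existence of such an involution acting on $\Pic(X)$ as a reflection forces $\cP_e(n)$ to be solvable while $\cP_{4e}(-5n)$ is not, and $e$ not to be a perfect square. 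Conversely, if $\cP_e(n)$ is solvable (say with minimal solution giving $(nr,s)$) but $\cP_{4e}(-5n)$ is not, then Proposition~\ref{autfw*} produces a non-trivial birational involution $\sigma$ of $X$ acting on $\Pic(X)$ as the reflection with axis spanned by $rH+sL$; since $\cD iv_X=\vide$ (as $\cP_e(-n)$ is not solvable—this needs checking, but follows because solvability of both $\cP_e(n)$ and $\cP_e(-n)$ together with non-solvability of $\cP_{4e}(-5n)$ leads to a contradiction via the cone description) $\sigma$ is in fact biregular, and the class $rH+sL$ (or a primitive multiple) lies in $\Mov(X)$, is fixed by $\sigma$, has square $2$ and divisibility $1$; arguing as in Corollary~\ref{cor53} the linear system it defines factors through the quotient $X/\sigma$, which is then an EPW sextic, so $X$ is a double EPW sextic.

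The hard part will be bookkeeping the Pell equations and divisibilities to confirm that the polarization class $rH+sL$ produced by the involution is genuinely primitive of square $2$ and divisibility $1$ (so that it defines the EPW polarization and not some multiple), and to verify that the non-solvability of $\cP_{4e}(-5n)$ is exactly what guarantees this class is \emph{nef} (hence ample, since $\cD iv_X=\vide$ rules out further walls) rather than merely movable—this is where one uses the precise formula $\nu_{n,e}$ versus $\mu_{n,e}$ from Example~\ref{ex:NefConesFano}(b). I would also need to double-check the edge case where $e$ is a perfect square: by the analysis in Remark~\ref{pesq} the relevant solutions still satisfy the ratio inequality, but the involution $\sigma$ may degenerate, and one should confirm the statement remains correct (the condition ``$\cP_e(n)$ solvable, $\cP_{4e}(-5n)$ not'' should be vacuous or consistent in that case). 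Modulo these verifications, the corollary follows by assembling the implications above, exactly parallel to the proof of Corollary~\ref{cor53}.
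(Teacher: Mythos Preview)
Your actual forward and converse arguments are essentially the paper's proof and are correct; but the ``plan'' and ``main step'' you announce---reducing to Corollary~\ref{cor53} by showing that $X$ is birational to some $S^{[2]}$---is a wrong turn that you fortunately never take. In fact it is false: under the hypotheses, once $\cP_e(n)$ is solvable the equation $\cP_e(-n)$ is not (at most one of $\cP_e(\pm n)$ can hold, since $-1$ is not a square modulo the square-free integer $n\equiv-1\pmod4$; this is in the proof of Proposition~\ref{autfw*}). But $\cP_e(-n)$ is exactly the condition for $\Pic(X)\simeq I_1(2n)\oplus I_1(-2e')$ to contain a $(-2)$-class, so $\cD iv_X=\varnothing$ and $X$ is \emph{not} birational to the Hilbert square of any K3 surface of Picard number~$1$. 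Drop that framing and go directly through Proposition~\ref{autfw*} and Proposition~\ref{thhw}, as your detailed paragraphs already do.

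Two smaller points on the converse. First, your endgame ``$X/\sigma$ is an EPW sextic, so $X$ is a double EPW sextic'' inverts the logic: one does not know a priori that the quotient by an anti-symplectic involution of a degree-$2$ fourfold is an EPW sextic. The paper (and, unpacked, Corollary~\ref{cor53} as well) argues instead that the $\sigma$-fixed class $rH+sL$ is ample---because it is positively proportional to $H+\sigma^*H$ with $H$ ample and $\sigma$ biregular---primitive, of square $2$ and divisibility $1$ (here $s$ is necessarily odd since $nr^2-e's^2=1$ and $n\equiv-1\pmod4$); so $(X,rH+sL)$ lies in $\cM_2^{(1)}$, and by Proposition~\ref{thhw} with $m=1$ it is a \emph{general} point of a component of $\cC_{2,2e}^{(1)}$. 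Since $e\ge 2n\ge 6$, Example~\ref{qqq1} then identifies $X$ with a double EPW sextic. Second, your side-checks are fine but can be made sharper: the non-solvability of $\cP_e(-n)$ follows as above, and the perfect-square case does not arise because $n\mid e$ with $n$ square-free forces $n\mid a$ for any solution $a^2-eb^2=n$, whence $n\mid 1$, contradicting $n\ge3$.
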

  
Under the hypotheses of the corollary, the automorphism group of $X$ is isomorphic to   $\Z\rtimes \Z/2\Z$  hence contains infinitely many involutions $(\sigma_m)_{m\in\Z}$ (Proposition~\ref{autfw*}).\ When $X$ is very general, all the quotients $X/\sigma_m$ are EPW sextics.

\begin{proof}
We may assume that $(X,H)$ is very general in $\cC_{2n,2e}^{(2)}$.\ If $X$ is isomorphic to a double EPW sextic, it has a non-trivial automorphism  and the conclusion follows from Proposition~\ref{autfw*}.\ Conversely, if the equation $\cP_{e}(n)$ is solvable but the equation $\cP_{4e}(-5n)$ is not, one checks that $e$ is not a perfect square, hence $X$ has, by Proposition~\ref{autfw*}, a non-trivial involution $\sigma$ (and in fact, countably many such involutions) that fixes a square-2 class $ rH+sL $ which is positively proportional to $ H +\nu_{n,e}L +\sigma^*(H +\nu_{n,e}L) $, hence ample.\ By Proposition~\ref{thhw}, the pair $(X,rH+sL)$ is a general element of $\cC_{2 ,2e}^{(1)}$, hence $X$ is a double EPW sextic by Example~\ref{qqq1} (note that $e\ge 2n\ge 6$). \end{proof}

\begin{rema}\label{ex:EPW}
Assume that both equations $\cP_{4e}(-5n)$ and $\cP_{e}(n)$ are solvable.\  As in Remark~\ref{rmk:TwoBirationalModels}, let $X'$ be the other birational model of a general $X$ in an irreducible component of $\cC_{2n,2e}^{(2)}$.\  Then $X'$ is isomorphic to a double EPW sextic by the same proof as above.
\end{rema}

Finally, we   show that given any positive integer $n$, Proposition~\ref{thhw} applies to infinitely many integers $e$.

\begin{coro}\label{coro521}
Let $n$ be a positive square-free integer such that $n\equiv -1 \pmod 4$.\  There are  infinitely many distinct hypersurfaces in the moduli space $\cM_{2n}^{(2)}$ whose general points correspond to double EPW sextics.\  Their union is dense in $  \cM_{2n}^{(2)}$.
\end{coro}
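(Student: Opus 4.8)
The plan is to combine Proposition~\ref{thhw} with the Clozel--Ullmo density theorem (Theorem~\ref{thcu}), exactly as Proposition~\ref{coro43} does for Hilbert squares. First I would produce infinitely many values of $e$ (all divisible by $n$, all $\ne n$) for which Proposition~\ref{thhw} applies with target divisibility $\gamma=2$ and $m=1$; by Corollary~\ref{thhepw} and Example~\ref{qqq1}, the corresponding irreducible component of $\cC_{2n,2e}^{(2)}$ then has general point a double EPW sextic. The natural choice is to take $m=1$ in Proposition~\ref{thhw}, so we need the Pell-type equation $\cP_e(n\cdot 1)=\cP_e(n)$ to have a solution $(na,b)$ with $a>0$, $\gcd(a,b)=1$, and $|b|/a<\nu_{n,e}$, with $b$ even (to land in $\cM_2^{(1)}$ — wait, we want to stay in $\cM_{2n}^{(2)}$). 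So instead I would not push to $m=1$; I would simply exhibit, for infinitely many $e$ with $n\mid e$ and $e\ne n$, a primitive solution to $\cP_e(nm)$ for an appropriate $m$ and with $b$ even, so that $\varpi$ maps $\cC_{2n,2e}^{(2)}$ birationally onto a component of $\cC_{2m,2e}^{(\gamma)}$ whose general point is a double EPW sextic by Corollary~\ref{thhepw}.

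Concretely, I would write $e=e'n$ and look for solutions of $\cP_{e'n}(nm)$; dividing through, this is equivalent to $n\mid a'^2$ for a solution $(a',b)$, and since $n$ is square-free one reduces to a congruence condition. A clean family: set $e' = n k^2 + $ (something) so that the obvious pair like $(na, b)=(n\cdot 1, b)$ with $b$ even solves $n^2 - e'b^2 = nm$, i.e. $e'b^2 = n^2 - nm = n(n-m)$; choosing $b=2$ gives $4e' = n(n-m)$, so picking $m \equiv n \pmod 4$ with $m<n$ and $m>0$ makes $e' = n(n-m)/4$ a positive integer, and then $e = e'n = n^2(n-m)/4$. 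This produces only finitely many values this way, so instead I would scale: for each integer $j\ge 1$ set $m_j = $ a fixed admissible residue and force $e$ to grow, e.g. take the solution $(n a_j, 2)$ of $\cP_{e_j}(n m)$ with $e_j = (n^2 a_j^2 - nm)/4$ for $a_j$ ranging over an arithmetic progression making $e_j$ an integer divisible by $n$; check $\gcd(a_j,2)=1$ (take $a_j$ odd) and that $2/a_j < \nu_{n,e_j}$, which holds for $a_j$ large since $\nu_{n,e}$ is bounded below. Verifying $e_j\ne n$ and that the $e_j$ are distinct (hence give distinct hypersurfaces $\cC_{2n,2e_j}^{(2)}$, by the discriminant $2e_j$) is then immediate. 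This gives infinitely many distinct hypersurfaces in $\cM_{2n}^{(2)}$ whose general point is a double EPW sextic.

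Finally, for density: each of these hypersurfaces is (the image under the period map of) a Heegner divisor in $\cM_{2n}^{(2)}$, and we have produced infinitely many of them, all distinct. Theorem~\ref{thcu} (Clozel--Ullmo) says the union of infinitely many Heegner divisors in $\cM_{2n}^{(2)}$ is dense for the euclidean topology, so we are done. The main obstacle I expect is the bookkeeping in the second step: arranging a genuinely \emph{infinite} family of admissible $e$ — one must simultaneously ensure $n\mid e$, $e\ne n$, the primitivity $\gcd(a,b)=1$, the parity of $b$ (so that the target divisibility is the one for which double EPW sextics live, namely matching Corollary~\ref{thhepw}/Example~\ref{qqq1}), the inequality $|b|/a<\nu_{n,e}$ (using that $\nu_{n,e}$ stays bounded away from $0$, which follows from Example~\ref{ex:NefConesFano} since $\nu_{n,e}\ge \mu_{n,e}$ and $\mu_{n,e}\ge n/\sqrt e$ compared against the growth of $e$ — this needs a short estimate), and the solvability of $\cP_e(nm)$ together with the \emph{non}-solvability of $\cP_{4e}(-5n)$ demanded by Corollary~\ref{thhepw}. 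The non-solvability condition is the delicate point; one handles it by choosing the family so that $-5n$ is a non-residue modulo a suitable prime dividing $4e$, which can be arranged by Dirichlet's theorem along an arithmetic progression of such $e$.
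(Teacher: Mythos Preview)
Your overall framework is right (produce infinitely many $e$ for which the general point of $\cC_{2n,2e}^{(2)}\subset\cM_{2n}^{(2)}$ is a double EPW sextic, then invoke Clozel--Ullmo), but the execution has a genuine gap stemming from a misunderstanding of what needs to be checked. The hypersurfaces you want already live in $\cM_{2n}^{(2)}$: they are the $\cC_{2n,2e}^{(2)}$. You do not need to ``stay in $\cM_{2n}^{(2)}$'' by controlling the parity of $b$ in Proposition~\ref{thhw}; that proposition (with $m=1$) only serves to show that the \emph{underlying fourfold} $X$ carries a second, square-$2$ polarization making it a double EPW sextic. This is exactly the content of Corollary~\ref{thhepw}, whose criterion is simply: $\cP_e(n)$ solvable and $\cP_{4e}(-5n)$ not solvable. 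Your detour through Proposition~\ref{thhw} with a general $m$ and $b$ even is irrelevant, and your attempt to invoke Corollary~\ref{thhepw} on the \emph{target} $\cC_{2m,2e}^{(\gamma)}$ is misplaced (that corollary is stated for the source $\cC_{2n,2e}^{(2)}$). Relatedly, your justification of the ampleness inequality $|b|/a<\nu_{n,e}$ via ``$\nu_{n,e}$ is bounded below'' is wrong: in the relevant case $\nu_{n,e}=n/\sqrt{e}\to 0$; the inequality can still hold for your family, but not for that reason, and in any case Corollary~\ref{thhepw} already absorbs this check.

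The paper's proof is much shorter and avoids all of this: apply Corollary~\ref{thhepw} directly. For the solvability of $\cP_e(n)$, take $e=n(nm^2-1)$, so that $(nm,1)$ is a solution. For the non-solvability of $\cP_{4e}(-5n)$: if $5\mid n$ this was established in the proof of Proposition~\ref{autfw*}; if $5\nmid n$, choose $m$ so that $e=n(nm^2-1)\equiv\pm2\pmod 5$, and then $\cP_{4e}(-5n)$ has no solution modulo $5$. This gives infinitely many admissible $e$, and density follows from Theorem~\ref{thcu}. Your vague appeal to Dirichlet's theorem for the non-solvability step can be made to work, but the mod-$5$ argument is both simpler and already sufficient.
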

 
\begin{proof} 
When $m>0$ and $e=n(nm^2-1)$, the pair $ (nm,1)$ is a solution to the equation $\cP_{e}(n )$.

If $5\mid n$, we will show in the proof of Proposition~\ref{autfw*} that $\cP_{4e}(-5n)$ is not solvable.\ 
If $5\nmid n$, we can  choose $m$ such that $n(nm^2 - 1)\equiv \pm 2 \pmod 5$ and by reducing modulo 5, we see that the equation $\cP_{4e}(-5n)$ is then not solvable.\   

We can therefore apply Corollary~\ref{thhepw}.\ Since there are infinitely many such $m$, this concludes the proof, using Theorem~\ref{thcu} for the density statement.
\end{proof}

\appendix

\section{Automorphisms of special hyperk\"ahler fourfolds}\label{appB}

We determine  the group $\Aut(X)$ of biregular automorphisms  and the group $\Bir(X)$ of birational automorphisms  for some   hyperk\"ahler fourfolds $X$ of $\KKK^{[2]}$-type with Picard number 1 or 2.\ The case of Hilbert squares of very general polarized K3 surfaces is in Appendix~\ref{appA}.

Let $X$ be a hyperk\"ahler fourfold.\ 
There are natural morphisms
\begin{equation}\label{psiab}
\Psi_X^A\colon\Aut(X)\to O(H^2(X,\Z),q_{X})\quad\textnormal{and}\quad \Psi_X^B\colon\Bir(X)\to O(H^2(X,\Z),q_{X})
\end{equation}
which send a (birational) automorphism $\phi$ of $X$ to its action $\phi^*$ on cohomology (see~\cite[Proposition~25.14]{ghj} for  $\Psi_X^B$).\ 
Elements of $\Im(\Psi_X^A)$ preserve the nef cone $\Nef(X)$, elements of $\Im(\Psi_X^B)$ preserve the movable cone $\Mov(X)$, and both preserve the Picard lattice and the Hodge structure.\

The kernel of $\Psi_X^B$ 
  is contained in $\Aut(X)$, hence in the kernel of $\Psi_X^A$ 
  (\cite[Proposition~2.4]{ogu}).\ 
The group $\Ker(\Psi_X^A)$  is a finite group which is invariant by smooth deformations (\cite[Theorem~2.1]{hast4}) and is trivial for the Hilbert square of a K3 surface (\cite[Proposition~10]{bea}).\ 
It follows that {\em for any hyperk\"ahler fourfold $X$ of $\KKK^{[2]}$-type, both $\Psi_X^A$ and $\Psi_X^B$ are injective.}

\begin{prop}\label{prop27}
Let $X$ be a hyperk\"ahler fourfold corresponding to a very general point of a moduli space $\cM^{(\gamma)}_{2n}$.\ 
The group $\Bir(X)$ of birational automorphisms of $X$ is trivial, unless $n=1$, in which case  $\Aut(X)=\Bir(X)\isom\Z/2\Z$.
\end{prop}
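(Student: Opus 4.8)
The plan is to exploit the injectivity of $\Psi_X^B$ (established just above the statement) to translate the problem into a purely lattice-theoretic computation on $\Pic(X)$, together with the descriptions of the movable and nef cones from Theorem~\ref{thm:NefConeHK4}. For a very general $(X,H)\in\cM^{(\gamma)}_{2n}$ we have $\Pic(X)=\Z H$ with $H^2=2n$, so $\Im(\Psi_X^B)$ is a subgroup of $O(\Pic(X))\cong\{\pm\Id\}$ that also preserves the Hodge structure on $H^2(X,\Z)$ and the movable cone. The first step is therefore to rule out $-\Id$: since $H$ lies in $\Pos(X)$ and $\Mov(X)\subset\overline{\Pos(X)}$, an isometry taking $H$ to $-H$ cannot preserve $\Mov(X)$; hence $\Im(\Psi_X^B)$ is trivial and $\Bir(X)=\Aut(X)=\{\mathrm{id}\}$. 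This already handles every $n$ for which $-\Id\notin\tO(H^2(X,\Z),h_0)$-compatible constraints force triviality — i.e.\ the generic behavior.

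The delicate point is that this argument seemingly proves $\Bir(X)$ is \emph{always} trivial, contradicting the claimed exception at $n=1$. So the real content of the proof is to understand why $n=1$ is different, and this is where the geometry of double EPW sextics (Example~\ref{qqq1}) enters. When $n=\gamma=1$, a very general $X$ is a double EPW sextic, which by construction carries the nontrivial covering involution $\sigma$ (the O'Grady involution); its action $\sigma^*$ on $\Pic(X)=\Z H$ is necessarily $+\Id$, but $\sigma^*$ on the transcendental lattice $H$-orthogonal complement is $-\Id$ (it is the ``anti-symplectic'' involution). The point is that $\sigma\neq\mathrm{id}$ even though $\sigma^*$ acts trivially on $\Pic(X)$ — so the injectivity of $\Psi_X^B$ forces $\sigma^*$ to be a \emph{nontrivial} Hodge isometry of $H^2(X,\Z)$, which it is because it is $-\Id$ on the transcendental part. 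Thus for $n=1$ one produces an element of $\Bir(X)$ of order $2$, and conversely any nontrivial element of $\Bir(X)$ must act as $\pm\Id$ on $\Pic(X)$ hence (being a nontrivial Hodge isometry) act as $-\Id$ on the transcendental lattice; such an isometry exists and extends to an automorphism precisely in the EPW situation, giving $\Bir(X)=\Aut(X)\cong\Z/2\Z$ and no larger.

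Concretely I would organize the proof as follows. First, record that $\Pic(X)=\Z H$ for $X$ very general, so $O(\Pic(X))=\{\pm\Id\}$ and any $f\in\Im(\Psi_X^B)$ restricts to $\pm\Id$ on $\Pic(X)$. Second, observe $f$ preserves $\Mov(X)\subset\overline{\Pos(X)}$ and $H\in\Pos(X)$, so $f|_{\Pic(X)}=+\Id$; hence $f$ is a Hodge isometry of $H^2(X,\Z)$ that is the identity on $\Pic(X)$. Third, by a Torelli-type uniqueness/strong Torelli argument for hyperk\"ahler fourfolds (Verbitsky, Theorem~\ref{tor1}, together with Markman's Hodge-theoretic Torelli), such an $f$ is either the identity or acts on the transcendental lattice $T(X):=\Pic(X)^{\perp}$ as a nontrivial Hodge isometry; for a very general $X$ the only Hodge isometries of $T(X)$ are $\pm\Id_{T(X)}$, so $f|_{T(X)}=-\Id_{T(X)}$. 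Fourth, determine when an isometry acting as $+\Id$ on $\Z H$ and $-\Id$ on $H^\perp$ (i) is a well-defined isometry of $H^2(X,\Z)$ and (ii) is realized by a biregular automorphism; step (i) is a gluing/discriminant-form computation using $D(H^2(X,\Z))=\Z/2\Z$ and $\bar q(1,0)=-\tfrac12,\ \bar q(0,1)=-\tfrac1{2n}$ from \eqref{lats}, and it works exactly when the parities match — which forces $n=1$. Step (ii) for $n=1$ is the existence of the EPW involution. The main obstacle is step (i)/(ii): carefully verifying both that the candidate involution glues to an integral isometry only for $n=1$ and that it is effective (represented by an automorphism, not merely a birational one — here one invokes that the class $H$ is ample on the double EPW sextic, as in Example~\ref{qqq1}, so the involution preserves the ample cone and is biregular by Theorem~\ref{thm:NefConeHK4}).
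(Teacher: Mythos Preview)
Your proposal is essentially correct and follows the same architecture as the paper's proof: reduce to showing that any nontrivial $\phi^*$ must act as $+\Id$ on $\Z H$ and $-\Id$ on $H^\perp$, then check when this glues to an integral isometry of $H^2(X,\Z)$, and finally realize it by the EPW involution when $n=1$. Two tactical differences are worth noting. First, to obtain $\phi^*|_{H^\perp}=\pm\Id$, you invoke generic simplicity of the transcendental Hodge structure (so that $\End_{\mathrm{Hdg}}(T(X)_\Q)=\Q$), whereas the paper argues that $\phi$ extends to small deformations and then uses \cite[Proposition~7]{bea} to conclude that $\phi^*|_{h^\perp}$ is a real root of unity; both are valid. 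Second, for the gluing step the paper works with explicit bases rather than discriminant forms: for $\gamma=1$ one takes $h_0=u+nv$ in a hyperbolic plane, notes $u-nv\in h_0^\perp$, and the two equations $\Phi(u\pm nv)=\pm(u\mp nv)+ (u+nv)$ force $n\Phi(v)=u$, hence $n=1$; this is more transparent than the ``parities match'' remark in your step~(i), which you should make precise.

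One genuine omission: you only treat $\gamma=1$ (you cite \eqref{lats}), but the statement also covers $\gamma=2$, where $n\equiv -1\pmod 4$ and $n\ge 3$. The paper handles this by the analogous explicit computation with $h_0=2u+\tfrac{n+1}{2}v+\ell$ and shows the extension equation $n\Phi(v)=4u+v+2\ell$ has no integral solution for $n\ge 3$. You should add this case. Finally, in your step~(ii) the reference to Theorem~\ref{thm:NefConeHK4} is not quite what you need; the passage from ``Hodge isometry preserving the ample cone'' to ``biregular automorphism'' is the Hodge-theoretic Torelli theorem (Markman/Verbitsky), not the description of the nef cone.
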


\begin{proof} 
As we saw in Section~\ref{sec22}, the Picard group of $X$ is generated by the class $h$ of the polarization.\ 
Any birational automorphism leaves this class fixed, hence is in particular biregular of finite order.\  
Let $\phi$ be a non-trivial automorphism of $X$.\ 
Since $\phi$ extends to small deformations of $X$, the  restriction of $\phi^*$ to $h^\bot$ is a homothety whose ratio is, by~\cite[Proposition 7]{bea}, a root of unity; since it is real and non-trivial (by injectivity of $\Psi_X^A$), it must be $- \Id$.\ 
We will prove that such an isometry of $\Z h\oplus h^\bot$ does not extend to an isometry $\Phi$ of $H^2(X,\Z)$ unless $h^2=2n=2$.

When $ \gamma=1$, we may take $h = u+nv$, where $(u,v)$ is a standard basis for a hyperbolic plane $U$ contained in $H^2(X,\Z)$.\ 
Then, $u-nv$ is in $h^\bot$, hence the isometry $\Phi$, if it exists, must satisfy
\[
\Phi(u+nv)=u+nv\quad\textnormal{and}\quad \Phi(u-nv)=-u+nv,
\]
which yields  $2n\Phi(v)=2u$.\  This is possible only when $n=1$.\  Conversely, in the case $n=1$, the fourfold $X$ is a double EPW sextic and does carry a non-trivial involution (Example~\ref{qqq1}).\  Moreover, this involution is the only non-trivial automorphism of a very general double EPW sextic (see the end of the proof of~\cite[Proposition~B.9]{dk1}).

When $\gamma=2$ (so that $n\equiv -1\pmod4$), we let $\ell$ be an element of $H^2(X,\Z)$ orthogonal to $U$ and such that $\ell^2=-2$.\ We may take, as in the proof of Proposition~\ref{propirr}, $h=2 u+\frac{n+1}{2}v +\ell$, and $h^\bot$ contains $ v+\ell$ and $  u-\frac{n+1}{4}v$.\  The isometry $\Phi$  must then satisfy
\[
\Phi\bigl(2 u+\tfrac{n+1}{2}v +\ell\bigr)=2 u+\tfrac{n+1}{2}v +\ell\ ,\ \  \Phi(v+\ell)=-v-\ell\ ,  \ \ \Phi\bigl( u-\tfrac{n+1}{4}v\bigr)= -u+\tfrac{n+1}{4}v,
\]
hence $n\Phi(v)=4u+v+2\ell$; this is absurd since $n\ge 3$.
\end{proof}
 
\begin{rema}
The conclusion of the proposition does not necessarily hold if we  assume only that the Picard number of $X$ is 1.\  In fact, Proposition~\ref{prop27} is also proved in~\cite[Theorem~3.1]{sbcomp} and the proof given there implies that $\Bir(X)$ is trivial when the Picard number of $X$ is 1, unless $n\in\{1,3,23\}$.\  These three cases are actual exceptions: we just saw that all fourfolds corresponding to points of $\cM^{(1)}_2$ carry a non-trivial biregular involution; there   is   a 10-dimensional subfamily of 
$\cM^{(2)}_{6}$ whose elements consists of fourfolds that have a biregular automorphism of order~3 and whose very general elements have  Picard number~1 (\cite[Section~7.1]{sbcomp}); there is a (unique) fourfold in $\cM^{(2)}_{46}$ with Picard number~1 and a biregular automorphism of order 23 (\cite[Theorem~1.1]{sbisom}).
\end{rema}

We now turn our attention to the polarized hyperk\"ahler fourfolds studied in Example~\ref{ex:NefConesFano}.

\begin{prop}\label{autfw*}
Let $n$ be a positive square-free integer such that $n\equiv -1 \pmod 4$.\ Let $(X,H)$ be a polarized hyperk\"ahler fourfold of $\KKK^{[2]}$-type of degree $2n$ and divisibility 2, such that $\Pic(X)=\Z H\oplus \Z L$, with intersection matrix   $\left(\begin{smallmatrix}2n&0\\0&-2e'\end{smallmatrix}\right)$.\ Set $e:=e'n$.
 
  \noindent{\rm (a)} If neither equations $\cP_{e}(-n)$ and $\cP_{4e}(-5n)$ are   solvable and $e$ is not a perfect square, the groups   $\Aut(X)$ and   $\Bir(X)$ are equal.\ They are infinite cyclic, except when the equation $\cP_{e}(n)$ is solvable, in which case these groups are isomorphic to the infinite dihedral group $\Z\rtimes \Z/2\Z$.

  \noindent{\rm (b)} If the equation $\cP_{e}(-n)$ is not solvable but the equation $\cP_{4e}(-5n)$ is, the  group $\Aut(X)$ is trivial and the group $\Bir(X)$ is infinite cyclic, except when the equation $\cP_{e}(n)$ is solvable, in which case it is infinite dihedral.

  \noindent{\rm (c)} If the equation $\cP_{e}(-n)$ is solvable or if $e$ is a perfect square, the group $\Bir(X)$ is trivial.
\end{prop}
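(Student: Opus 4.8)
The plan is to convert the statement into lattice theory and then read everything off from Example~\ref{ex:NefConesFano}.\ First I would recall the injective maps $\Psi_X^A,\Psi_X^B$ and the Hodge‑theoretic Torelli theorem of Markman: for $\KKK^{[2]}$‑type the monodromy group of $X$ is the index‑two subgroup $O^+(H^2(X,\Z))$ of isometries preserving the orientation of a positive‑definite $3$‑subspace, $\Psi_X^B$ identifies $\Bir(X)$ with the group of Hodge isometries in $O^+(H^2(X,\Z))$ preserving $\overline{\Mov(X)}$, and $\Psi_X^A$ identifies $\Aut(X)$ with the subgroup of those that moreover preserve $\Nef(X)$.\ Taking $X$ general in the irreducible family $\cC_{2n,2e}^{(2)}$, so that the only Hodge isometries of the transcendental lattice $T:=\Pic(X)^\perp$ are $\pm\Id$, and noting that $(\Id_{\Pic(X)},-\Id_T)$ does not extend to an integral isometry of $H^2(X,\Z)$ (the glueing subgroup of $D(\Pic(X))$ has order $2ne'$ or $4ne'$, hence is not $2$‑torsion since $n\ge3$), I would reduce to computing the group $G$ of isometries $g$ of $\Pic(X)$ with $g(\overline{\Mov(X)})=\overline{\Mov(X)}$ that lift to $O^+(H^2(X,\Z))$ via $(g,\pm\Id_T)$, together with its subgroup cut out by $g(\Nef(X))=\Nef(X)$.

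Next I would establish three facts.\ \emph{(i) Structure of $O^+(\Pic(X))$.} The lattice $\Pic(X)\isom I_1(2n)\oplus I_1(-2e')$ represents $0$ exactly when $e=e'n$ is a perfect square, in which case $O^+(\Pic(X))$ is finite with $R_L:=\left(\begin{smallmatrix}1&0\\0&-1\end{smallmatrix}\right)$ as its only possible non‑trivial element; otherwise $O^+(\Pic(X))\isom\Z\rtimes\Z/2\Z$ is infinite dihedral, with rotation subgroup $\isom\Z$ generated by the isometry attached to the fundamental solution of $\cP_e(1)$ and reflections the conjugates of $R_L$.\ \emph{(ii) Cones.} By Example~\ref{ex:NefConesFano} and Theorem~\ref{thm:NefConeHK4}, $\cD iv_X\neq\vide$ (resp.\ $\cFlop_X\neq\vide$) precisely when $\cP_e(-n)$ (resp.\ $\cP_{4e}(-5n)$) is solvable; if $\cP_e(-n)$ is not solvable then $\overline{\Mov(X)}=\overline{\Pos(X)}$, and if in addition $\cP_{4e}(-5n)$ is not solvable then $\overline{\Nef(X)}=\overline{\Pos(X)}$ as well; whereas if $\cP_e(-n)$ is solvable, $\overline{\Mov(X)}$ is a bounded chamber, symmetric about $\R H$, whose two walls have positive square, and which is a fundamental domain for $W_{\Exc}$.\ \emph{(iii) Which isometries lift.} Using $\div_{H^2(X,\Z)}(H)=2$ and — from the Eichler‑type uniqueness of the embedding $\Pic(X)\hookrightarrow\L_{\KKK^{[2]}}$ — $\div_{H^2(X,\Z)}(L)=1$, together with the fact that the discriminant form of $\L_{\KKK^{[2]}}$ takes value $-\tfrac12$ on its non‑zero element (so $H^2(X,\Z)$ has no class of square $\pm4$ and divisibility~$2$), one checks: $R_L$ never lifts to $O^+(H^2(X,\Z))$ once $e'\ge2$, which always holds here since $e'=1$ would place the $(-2)$‑class $L$ in $H^\perp$, contradicting ampleness of $H$ by Theorem~\ref{thm:NefConeHK4}(b); a reflection of $O^+(\Pic(X))$ lifts to a Hodge monodromy isometry if and only if $\Pic(X)$ contains a vector $w$ of square~$2$ (the lift being $x\mapsto(x\cdot w)w-x$ on $\Pic(X)$ extended by $-\Id$ on $T$), i.e.\ if and only if $\cP_e(n)$ is solvable; and the rotation subgroup always contributes a non‑trivial — hence, being a subgroup of $\Z$, infinite cyclic — subgroup of liftable isometries when $e$ is not a perfect square.

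The three cases then fall out.\ In \emph{(a)} one has $\overline{\Nef(X)}=\overline{\Mov(X)}=\overline{\Pos(X)}$, so the cone conditions are vacuous and $\Aut(X)=\Bir(X)=G$; since $e$ is not a perfect square, $G$ contains an infinite cyclic subgroup of rotations, together with a reflection exactly when $\cP_e(n)$ is solvable, giving $\Z$ or $\Z\rtimes\Z/2\Z$.\ In \emph{(b)} one still has $\overline{\Mov(X)}=\overline{\Pos(X)}$, so $\Bir(X)$ is as in case (a); but $\Nef(X)\subsetneq\Int(\Mov(X))$ is one of infinitely many bounded chambers, symmetric about $\R H$, and no non‑trivial element of $O^+(\Pic(X))$ stabilizes it — a non‑trivial rotation is a hyperbolic isometry with no interior fixed point, and a reflection liftable to $\Bir(X)$ fixes the line spanned by a square‑$2$ vector, never proportional to $H$ — so $\Aut(X)$ is trivial.\ In \emph{(c)}, either $e$ is a perfect square and $O^+(\Pic(X))$ is finite with only $R_L$ as a possible non‑trivial element, which does not lift; or $\cP_e(-n)$ is solvable and $\overline{\Mov(X)}$ is a bounded chamber symmetric about $\R H$, so no non‑trivial rotation preserves it and the only reflection of $O^+(\Pic(X))$ preserving it is $R_L$, which again does not lift; either way $\Bir(X)$, hence $\Aut(X)$, is trivial.

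The hard part will be fact (iii): the explicit analysis, via Nikulin's glueing, of which isometries of the rank‑$2$ lattice $\Pic(X)$ extend to monodromy operators of $H^2(X,\Z)$.\ This hinges both on keeping careful track of the divisibilities of $H$ and $L$ in $H^2(X,\Z)$ — and of the discriminant form of $\L_{\KKK^{[2]}}$, which is what kills the would‑be square‑$\pm4$ divisibility‑$2$ reflections — and on a Pell‑equation manipulation identifying the liftable reflections with the solutions of $\cP_e(n)$.\ It is the rank‑$2$ analogue of the computation in the proof of Proposition~\ref{prop27}.\ In the sub‑cases of (c) where $\cD iv_X\neq\vide$, an alternative is to realize $X$ as a moduli space of stable sheaves on a twisted K3 surface (Remark~\ref{rmk:NefConeHK4}(c)) and read off $\Aut(X)$ and $\Bir(X)$ from the Bayer--Macr\`i description.
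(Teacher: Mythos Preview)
Your plan follows the paper's architecture closely: reduce via Torelli to the group $G$ of isometries of $\Pic(X)$ that lift to Hodge isometries of $H^2(X,\Z)$ acting as $\pm\Id$ on $T$, determine $G$ by an explicit lattice computation, and read off the three cases from the cone description in Example~\ref{ex:NefConesFano}. Two points deserve comment.

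First, a gap: the proposition is stated for \emph{all} $(X,H)$ with the given Picard lattice, not just general ones, so you cannot simply ``take $X$ general'' to force the Hodge isometries of $T$ to be $\pm\Id$. The paper handles this for arbitrary such $X$ by observing that $b_2(X)-\rho(X)=21$ is odd, which (via \cite[proof of Lemma~4.1]{ogu5}) forces every Hodge isometry of the transcendental lattice to be $\pm\Id$. With this correction your reduction goes through unchanged.

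Second, your treatment of case~(b) is genuinely different from, and shorter than, the paper's. You invoke Markman's identification of $\Psi_X^B(\Bir(X))$ with the stabilizer of the movable cone inside the group of Hodge monodromy operators; since here $\overline{\Mov(X)}=\overline{\Pos(X)}$, this gives $\Bir(X)\cong G$ immediately. The paper instead uses only the injection $\Bir(X)\hookrightarrow G$ together with Oguiso's criterion \cite[Theorem~1.3]{ogu} to get that $\Bir(X)$ is infinite, and then, to show it contains a reflection when $\cP_e(n)$ is solvable, \emph{constructs} the birational involution explicitly: a separate lemma analyzes all solutions of $\cP_{4e}(-5n)$ as two orbits under the fundamental unit of $\cP_e(1)$, enumerates the flop walls inside $\Mov(X)$, and exhibits a chamber stabilized by a reflection in $G$ --- hence a biregular involution on the corresponding birational model $X'$. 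Your route avoids all this; the paper's buys the explicit description of $X'$ (used in Remark~\ref{rmk:TwoBirationalModels} and Remark~\ref{ex:EPW}) and, incidentally, the fact that $5\nmid e$ under the hypotheses of~(b). One small inaccuracy in your fact~(iii): reflections in $O^+(\Pic(X))$ also lift when $\cP_e(-n)$ is solvable (then with sign $+\Id$ on $T$), though this does not affect your case analysis since you assume $\cP_e(-n)$ unsolvable in (a) and (b) and argue via the shape of the movable cone in~(c).
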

    
\begin{proof}
We saw that the map $\Psi_X^A\colon \Aut(X)\to  O(H^2(X,\Z))$ is injective.\  Its image consists of isometries which  preserve $\Pic(X)$ and the  ample cone and, since $b_2(X)-\rho(X)$ is odd,  restrict  to $\pm \Id $ on $\Pic(X)^\bot$ (\cite[proof of Lemma~4.1]{ogu5}).\  Conversely, by the Torelli  Theorem~\ref{tor1}, any isometry with these properties is in the image of $\Psi_X^A$.\  We begin with some general remarks on the group $G$ of isometries of $H^2(X,\Z)$ which preserve $\Pic(X)$ and the components of the positive cone, and restrict to $\eps \Id $ on $\Pic(X)^\bot$, with $\eps\in\{-1,1\}$.

The orthogonal group  of the rank-2 lattice $(\Pic(X),q_X)\isom I_1(2n)\oplus I_1(-2e')$  is easily  determined: if we let $\delta :=\gcd(n,e')$ and we write $n=\delta n'$ and $e'=\delta e''$,  we have
\[
O(\Pic(X),q_X)=\left\{ \begin{pmatrix} a& \alpha e''b\\n'b&\alpha a \end{pmatrix}  \Big|\ a,b\in \Z, \  a^2-n'e''b^2=1 ,\ \alpha\in\{-1,1\} \right\}.
\]
Note that $\alpha$ is the determinant of the isometry and
\begin{itemize}
 \item  such an isometry   preserves  the components of  the positive cone if and only if $a>0$; we denote the corresponding subgroup by $O^+(\Pic(X))$;
 \item when $e$ is not a perfect square, the group $SO^+(\Pic(X))$ is infinite cyclic, generated by the isometry $R$ corresponding to the minimal solution to the equation $\cP_{n'e''}(1)$ and the group $O^+(\Pic(X))$ is infinite dihedral;
 \item when $e$ is  a perfect square, so is $n'e''=e/\delta^2$, and   $O^+(\Pic(X))=\{ \Id, \left(\begin{smallmatrix} 1&0\\0&-1\end{smallmatrix}\right)\}$. 
\end{itemize}
    
As we saw during the proof of Proposition~\ref{propirr}, there exist standard bases $(u_1,v_1)$ and $(u_2,v_2)$ for two orthogonal hyperbolic planes in $\L_{\KKK^{[2]}}$, a generator $\ell$ for the $I_1(-2)$ factor, and an isometric identification $H^2(X,\Z)\isomto \L_{\KKK^{[2]}}$ such that
\[
H=2u_1+\frac{n+1}{2}v_1+\ell \quad \text{and}\quad L=u_2-e'v_2.
\]
The elements $\Phi$ of $G$ must then have $a>0$ and satisfy
\begin{eqnarray*} 
  \Phi(2u_1+\tfrac{n+1}{2}v_1+\ell)&=&a(2u_1+\tfrac{n+1}{2}v_1+\ell)+n'b(u_2-e'v_2)\\
  \Phi(u_2-e'v_2)&=&\alpha e''b(2u_1+\tfrac{n+1}{2}v_1+\ell)+\alpha a(u_2-e'v_2)\\
  \Phi(v_1+\ell)&=&\eps (v_1+\ell)\\
  \Phi(u_1-\tfrac{n+1}{4}v_1)&=&\eps (u_1-\tfrac{n+1}{4}v_1)\\
 \Phi(u_2+e'v_2)&=&\eps (u_2+e'v_2)
\end{eqnarray*}
(the last three lines correspond  to vectors in $\Pic(X)^\bot$).\ 
From this, we deduce
\begin{eqnarray*} 
   n\Phi(v_1)&=&2(a-\eps)u_1+\bigl( (a+\eps)\tfrac{n+1}{2}-\eps\bigr)v_1+(a-\eps)\ell+n'b(u_2-e'v_2)\\
  2 \Phi(u_2)&=&2\alpha e'' bu_1+\alpha e'' b\tfrac{n+1}{2}v_1+
  \alpha e''b \ell+(\eps+\alpha a)u_2+e' (\eps-\alpha a)v_2\\
   2e'\Phi(v_2)&=&-2\alpha e'' bu_1-\alpha e'' b\tfrac{n+1}{2}v_1
  -\alpha e''b \ell+(\eps-\alpha a)u_2+e' (\eps+\alpha a)v_2.
\end{eqnarray*}
From the first equation, we get $\delta\mid b$ and $a\equiv \eps\pmod{n}$; from the second equation, we deduce that $e''b$ and $\eps+\alpha a$ are even; from the third equation, we get $2\delta\mid b$ and $a\equiv \alpha\eps\pmod{2e'}$.\ All this is equivalent to $a>0$ and
\begin{equation}\label{eqab**}
2\delta\mid b\quad,\quad a\equiv \eps\pmod{n} \quad,\quad   a\equiv \alpha\eps\pmod{2e'}.
\end{equation}

Conversely, if these conditions are realized, one may define $\Phi$ uniquely on $\Z u_1\oplus \Z v_1\oplus \Z u_2\oplus \Z v_2 \oplus \Z \ell$ using  the formulas above, and extend it by $\eps \Id$ on the orthogonal of this lattice in $ \L_{\KKK^{[2]}}$ to obtain an element of $G$.

The first congruence in \eqref{eqab**} tells us that the identity on $\Pic(X)$ extended by $- \Id$ on its orthogonal does not lift to an isometry of $H^2(X,\Z) $.\  This means that the restriction $G\to O^+(\Pic(X))$ is injective.\  Moreover, the two congruences in \eqref{eqab**} imply $a\equiv \eps \equiv \alpha\eps\pmod{\delta}$.\  If $\delta>1$, since $n$, hence also $\delta$, is odd, we get  $\alpha=1$, hence the image of $G$ is contained in $SO^+(\Pic(X))$.

\noindent{\bf Assume $\alpha=1$.}\ The relations \eqref{eqab**} imply that $a-\eps$ is divisible by $n$ and $2e'$, hence by their least common multiple $2\delta n'e''$.\ We write
$b=2\delta b'$ and $a=2\delta n'e''a'+\eps$  
and
 obtain from the equality  $a^2-n'e''b^2=1$  the relation
\[
4\delta^2n^{\prime 2}e^{\prime\prime 2}a^{\prime 2}+4\eps \delta n'e''a'=4\delta^2n'e'' b^{\prime 2},
\]
hence 
\[
\delta n'e''a^{\prime 2}+\eps a'=\delta b^{\prime 2}.
\]
In particular, $a'':=a'/\delta$ is an integer and $b^{\prime 2}=a''(ea''+\eps)$.\ 

Since $a>0$ and $a''$ and $ea''+\eps$ are coprime, both are perfect squares and there exist coprime integers $r$ and $s$, with $r>0$, such that
\[
a''=s^2\quad,\quad e a''+\eps=r^2 \quad,\quad b'=rs.
\]

Since  $-1$ is not a square modulo $n$, we obtain $\eps=1$; the pair $(r,s)$ satisfies the Pell equation $r^2- e s^2=1$, and $a=2es^2+1$ and $b=2\delta rs$.\ In particular, either $e$ is not a perfect square and there are always infinitely many solutions, or $e$ is a perfect square and we get $r=1$ and $s=0$, so that $\Phi=\Id$.

\noindent{\bf Assume $\alpha=-1$.}\ As observed before, we have $\delta=1$, \ie, $n$ and $e'$ are coprime.\ Using~\eqref{eqab**}, we may write $b=2b'$ and $a=2a'e'-\eps$.\ Since $2\nmid n$ and $a\equiv \eps \pmod{n}$, we deduce  $\gcd(a',n)=1$.\ 
Substituting  into the equation $a^2-ne'b^2=1$, we obtain
\[
a'(e'a'-\eps)=nb^{\prime 2},
\]
hence there exist coprime integers $r$ and $s$, with $r\ge 0$, such that $b'=rs$, $a'=s^2$, and $e'a'-\eps=nr^2$.\  The pair $(r,s)$ satisfies the equation $nr^2-e's^2=-\eps$, and $a=2e's^2-\eps$ and $b=2rs$.\  In particular,  one of the two equations  $\cP_e(\pm n)$ is solvable.\  Note that at most one of the equations $\cP_e(\pm n)$ may be solvable: if $\cP_e(-\eps n)$ is solvable, $\eps e'$ is a square modulo $n$, while $-1$ is not.\ These isometries are all reflections and, since $n\ge 2$ and $e'\geq2$, $\left(\begin{smallmatrix} 1&0\\0&-1\end{smallmatrix}\right)$ is not one of them.\ In particular, if $e$ is a perfect square,   $G=\{\Id\}$.

\smallskip

We now go back to the proof of the proposition.\ We proved that the composition $\Aut(X)\to G\to O^+(\Pic(X))$ is injective and so is the morphism $\Bir(X)\to G\to O^+(\Pic(X))$ (any element of its kernel is in $\Aut(X)$).

Under the hypotheses of (a), both slopes of the nef cone are irrational (Example~\ref{ex:NefConesFano}).\  By~\cite[Theorem 1.3]{ogu}, the groups $\Aut(X)$ and $\Bir(X)$ are then equal and infinite.\  The calculations above allow us to be more precise: in this case, the ample cone is just one component of the positive cone and the groups $\Aut(X)$ and $G$ are isomorphic.\  The proposition then follows from the discussions above (note that when there are involutions, the equation  $\cP_e(  n)$ has a solution $(nr,s)$ hence, in the notation above, $\eps=-1$ and these involutions act on $H^2(X,\Z)$ as the symmetries about ample square-2 classes $ rH+sL$).

Under the hypotheses of (c), the slopes of the extremal rays of the nef and movable cones are rational (Example~\ref{ex:NefConesFano}) hence, by~\cite[Theorem~1.3]{ogu} again, $\Bir(X)$ is a finite group.\  By~\cite[Proposition~3.1(2)]{ogu},  any non-trivial element $\Phi$ of its image in $O^+(\Pic(X))$ is an involution which satisfies $\Phi(\Mov(X))= \Mov(X)$, hence switches the two extremal rays of this cone.\  This means $\Phi(H\pm \mu_{n,e}L)=H\mp \mu_{n,e}L$, hence $\Phi(H)=H$, so that  $\Phi=\left(\begin{smallmatrix} 1&0\\0&-1\end{smallmatrix}\right)$.\  Since we saw that this is impossible, the group  $\Bir(X)$ is trivial.

Under the hypotheses of (b), the slopes of the nef cone are both rational and the slopes of the movable cone are both irrational (Example~\ref{ex:NefConesFano}).\ By~\cite[Theorem~1.3]{ogu} again, $\Aut(X)$ is a finite group and $\Bir(X)$ is infinite.\ 
The same reasoning as in case~(c) shows that the group  $\Aut(X)$ is in fact trivial; moreover, the group $\Bir(X)$ is a subgroup of $\Z$, except when the equation $\cP_{e}(n) $ is solvable, where it is a subgroup of $\Z\rtimes \Z/2\Z$.\

In the latter case, such an infinite subgroup is isomorphic  either to $\Z$ or to $\Z\rtimes \Z/2\Z$ and we exclude the first case by  showing that there is indeed a regular involution on  a birational model of $X$ (this generalizes the case $n=3$ and $e=6$ treated in~\cite{hast3}).\ We denote by $(na_n,b_n)$   the minimal solution to the equation $\cP_{e}(n) $ and set $x_n:=na_n+b_n\sqrt{e}\in\Z[\sqrt{e}]$.

As observed in Remark~\ref{rmk:NefConeHK4}(b), the set of all positive solutions $(a,b)$ to the equation $na^2-4e'b^2=-5$ (so that $aH\pm 2bL\in \cFlop_X $, or equivalently, $(na,b)$ is a solution to the equation $\cP_{4e}(-5n)$)
  determines an infinite sequence of rays $\R_{\ge0}(2e'bH\pm naL)$ in $\Mov(X)$ and the nef cones of hyperk\"ahler fourfolds birational to $X$ can be identified with the chambers with respect to this collection of rays.\ For example, if $(na_{5n},b_{5n})$ is the minimal solution to the equation $\cP_{4e}(-5n)$, the two extremal rays of the cone $\Nef(X)$ are spanned by $\alpha_0:=2e'b_{-5n}H - na_{-5n}L$ and $\alpha_1:=2e'b_{-5n}H + na_{-5n}L$.\ We want to describe all solutions $(a,b)$.
  
  \begin{lemm}
  The minimal solution to the Pell equation  $\cP_{e}(1)$ is given by $y_1=n a_n^2 + e'b_n^2 +2a_nb_n \sqrt{e}$ and  
  all the   solutions  $(na,b)$  to the equation $\cP_{4e}(-5n)$ are   given by the two disjoint families
  $$na + 2b \sqrt{e}=\pm x_{-5n}y_1^m\ \textnormal{ or }\ \pm \overline{x_{-5n}} y_1^m,\quad m\in\Z
  ,$$
  where $x_{-5n}:=na_{-5n} +2b_{-5n} \sqrt{e}$.
\end{lemm}

\begin{proof}
Let $(a,b)\in\Z^2$ and set $x:=na +b \sqrt{e}\in\Z[\sqrt{e}]$ and $y:=\frac{1}{n}xx_n$.\ We have
$$y = naa_n +e'bb_n +(ab_n+ a_nb)\sqrt{e}=:a'+b'\sqrt{e} \in\Z[\sqrt{e}]
$$
and, if $N$ is the norm in the ring $\Z[\sqrt{e}]$, we have $N(x_n)=n$ and, if $t$ is any non-zero integer,
$$(na,b) \textnormal{ solution to }\cP_{e}(tn)\ \Leftrightarrow\ N(x)=tn\ \Leftrightarrow\ N(y)=t\ \Leftrightarrow\  (a',b')\textnormal{ solution to }\cP_{e}(t).
$$
Since $x=y\overline{ x_n}$, this establishes a one-to-one correspondance between the solutions of the equation $\cP_{e}(tn)$ and those of $\cP_{e}(t)$.\ In particular, the minimal solution to the Pell equation  $\cP_{e}(1)$ is given by $y_1=\frac{1}{n} x_n^2=n a_n^2 + e'b_n^2 +2a_nb_n \sqrt{e}$.

The solutions to the equation $\cP_{e}(-5)$ were analyzed in~\cite[Theorem~110]{nage}: if $y_{-5}$ corresponds to its minimal solution, they are all given by $ \pm y_{-5}y_1^m$, 
$m\in\Z$, and their conjugates.\ It follows that all the   solutions $(na,b)$ to the equation $\cP_{ e}(-5n)$ are given by $\pm y_{-5}\overline{ x_n}y_1^m$ and their conjugates.\ Since the ``imaginary'' part of $y_1$ is even and its ``real'' part is odd, the parity of the ``imaginary'' parts of these solutions are all the same.\ Since the equation $\cP_{4e}(-5n)$ is solvable, they are all even, and we therefore obtain all the solutions to the equation  
$\cP_{4e}(-5n)$.

To prove that the conjugates provide a disjoint set of   solutions, we need to check, by~\cite[Theorem~110]{nage}, that $5$  does not divide  $4e$.

Assume first  $5\mid e'$.\ Since the equation $\cP_{e}(n)$ is solvable, we have  $\left(\frac{n}{5} \right)=1$; moreover, since $n\equiv -1\pmod{4}$, we have   $\left(\frac{e'}{n}\right)=-1$.\ The solvability of the equation $\cP_{4e}(-5n)$ implies $\left(\frac{5}{n}\right)=\left(\frac{e'}{n}\right)$; putting all that together contradicts quadratic reciprocity. 

Assume now  $5\mid n$ and set $n':=n/5$.\  Since the equation $\cP_{e}(n)$ is solvable, we have $\left(\frac{e'}{5} \right)=1$; moreover, since $n'\equiv -1\pmod{4}$, we have $\left(\frac{e'}{n'}\right)=-1$.\  Since $5\nmid e'$, the equation $\cP_{n',20e'}(-1)$ is solvable, hence $\left(\frac{5e'}{n'}\right)=1$; again, this contradicts quadratic reciprocity.
\end{proof}

 We can reinterpret this as follows.\  Since $\gcd(n,e')=1$ (because $na_n^2-e'b_n^2=1$), the generator $R$ of the group  $SO^+(\Pic(X))$ previously defined is $R=\left(\begin{smallmatrix} n a_n^2 + e'b_n^2&  2e'a_nb_n\\2n a_nb_n& n a_n^2 + e'b_n^2 \end{smallmatrix}\right)$.\ If we set $\alpha_{i+2}:=R(\alpha_i)$, the lemma means that the infinitely many rays in $\Mov(X)$ described above are   the $(\R_{\ge0}\alpha_i)_{i\in\Z}$.\ The fact that  the conjugate solutions form a disjoint family means exactly that the ray $\R_{\ge0}\alpha_2$ is ``above'' the ray $\R_{\ge0}\alpha_1$; in other words, we have an ``increasing'' infinite sequence of  rays 
 $$\cdots< \R_{\ge0}\alpha_{-1}<\R_{\ge0}\alpha_0< \R_{\ge0}\alpha_1< \R_{\ge0}\alpha_2< \cdots.$$

It follows from the discussion above that the reflection $R\left(\begin{smallmatrix} 1&0\\0&-1\end{smallmatrix}\right)$  belongs to the group $G$ and preserves the nef cone of the birational model $X'$ of $X$ whose nef cone is generated by $\alpha_1$ and $\alpha_2$.\  It is therefore induced by a biregular involution of $X'$ which defines a birational involution of $X$.\  This concludes the proof of the proposition.
\end{proof}

\begin{rema}\label{rmk:TwoBirationalModels}
It follows from the  proof above that in case (b), if both equations $\cP_{4e}(-5n)$ and $\cP_{e}(n)$ are solvable, $X$ has exactly one non-trivial birational model.\  It is obtained from $X$ by a composition of Mukai flops with respect to  Lagrangian planes (Remark~\ref{rmk:NefConeHK4}(b)).
\end{rema}

\section{Automorphisms of Hilbert squares of very general K3 surfaces}\label{appA}

Since the extremal rays of  the movable cone of the Hilbert square $S^{[2]}$ of a very general  K3 surface $S$ of given degree $2e $ are   rational (Example~\ref{ex:NefConesHilbertSchemes}), its group $\Bir(S^{[2]}) $ of birational automorphisms is finite (\cite[Theorem 1.3(2)]{ogu}).\ 
Using the Torelli Theorem~\ref{tor1}, one can determine the group $\Aut(S^{[2]})$ of its biregular automorphisms (\cite[Theorem~1.1]{bcs}) and also, using the description of the nef and movable cones (Example~\ref{ex:NefConesHilbertSchemes}),  the group $\Bir(S^{[2]}) $.

\begin{theo}[Boissi\`ere--Cattaneo--Nieper-Wi\ss kirchen--Sarti]\label{bbb} 
Let $(S,L)$ be a polarized K3 surface of degree $2e$ with Picard group $\Z L$.\ 
The variety $S^{[2]}$ has a non-trivial automorphism if and only if either $e=1$, or  the equation $\cP_{e}(-1)$ is solvable and the equation $\cP_{4e}(5)$ is not.
\end{theo}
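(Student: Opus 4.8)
The plan is to reduce the statement, via the injectivity of $\Psi^A_{S^{[2]}}$ and Verbitsky's Torelli Theorem~\ref{tor1}, to a lattice-theoretic question about isometries of $H^2(S^{[2]},\Z)$ that preserve the Picard lattice and the nef cone, and then to answer it using the explicit description of $\Nef(S^{[2]})$ from Example~\ref{ex:NefConesHilbertSchemes}. First I would record the reduction: since $\Psi^A_{S^{[2]}}$ is injective, a non-trivial automorphism of $S^{[2]}$ is the same thing as a non-trivial isometry $\Phi$ of $H^2(S^{[2]},\Z)$ preserving the Hodge structure, the Picard lattice $\Pic(S^{[2]})=\Z L_2\oplus\Z\delta$, and the ample cone; conversely, by Theorem~\ref{tor1}, every such isometry is induced by an automorphism. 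As $b_2(S^{[2]})-\rho(S^{[2]})=21$ is odd, such a $\Phi$ restricts to $\eps\Id$ on $\Pic(S^{[2]})^\bot$ for some $\eps\in\{-1,1\}$ (so it is automatically a Hodge isometry). It thus remains to decide when there is a non-trivial isometry $\Phi$ of $H^2(S^{[2]},\Z)$ with $\Phi|_{\Pic(S^{[2]})^\bot}=\eps\Id$, $\Phi(\Pic(S^{[2]}))=\Pic(S^{[2]})$, and $\Phi(\Nef(S^{[2]}))=\Nef(S^{[2]})$.

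Next I would analyse the restriction $\Phi|_{\Pic(S^{[2]})}$. It preserves the components of the positive cone and the two-dimensional cone $\Nef(S^{[2]})$, one extremal ray of which is $\R_{\ge0}L_2$; a non-trivial such isometry must therefore exchange the two extremal rays, and so is a reflection. If $\Phi|_{\Pic(S^{[2]})}=\Id$, then necessarily $\eps=-1$ (otherwise $\Phi$ would be the identity on the finite-index sublattice $\Pic(S^{[2]})\oplus\Pic(S^{[2]})^\bot$, hence on $H^2(S^{[2]},\Z)$); a short computation with discriminant forms shows that $\Id_{\Pic(S^{[2]})}\oplus(-\Id_{\Pic(S^{[2]})^\bot})$ glues to an isometry of $H^2(S^{[2]},\Z)$ exactly when $1\equiv-1\pmod{2e}$, \ie\ when $e=1$; this yields the case $e=1$, with a non-trivial involution. (Alternatively, for $e=1$ one may invoke that $S^{[2]}$ is then a double EPW sextic, cf.\ Example~\ref{qqq1}.)

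If instead $\Phi|_{\Pic(S^{[2]})}$ is the ray-exchanging reflection, then, since $\Phi(L_2)^2=L_2^2=2e$, the second extremal ray of $\Nef(S^{[2]})$ must be spanned by an integral class of square $2e$. By Example~\ref{ex:NefConesHilbertSchemes}(b): if $\cP_{4e}(5)$ is solvable this ray is spanned by $a_5L_2-2eb_5\delta$, of square $10e$; if $e$ is a perfect square $>1$, by $L_2-\sqrt e\,\delta$, of square $0$; in neither case can an isometry carry $L_2$ onto it. One is therefore forced into the case where $\cP_{4e}(5)$ is not solvable and $e$ is not a perfect square, so that $\nu_e=\mu_e=e b_1/a_1$ with $(a_1,b_1)$ the minimal solution of $\cP_e(1)$ and the second ray is spanned by $\beta:=a_1L_2-eb_1\delta$, of square $2e(a_1^2-eb_1^2)=2e$. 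The reflection exchanging $L_2$ and $\beta$ then sends $L_2\mapsto\beta$ and $\delta\mapsto b_1L_2-a_1\delta$ (using $a_1^2-1=eb_1^2$), so it is automatically an integral isometry of $\Pic(S^{[2]})$. It remains to decide when it extends to an isometry of $H^2(S^{[2]},\Z)$ restricting to $\pm\Id$ on $\Pic(S^{[2]})^\bot$; this is again a discriminant-form condition, which unwinds to the congruence $a_1\equiv-1\pmod{2e}$, and an elementary argument (factoring $a_1^2-1=eb_1^2$) shows this holds exactly when $\cP_e(-1)$ is solvable — note that $\cP_e(-1)$ solvable already forces $e$ not to be a perfect square when $e>1$. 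Assembling the two cases gives the stated equivalence, and in each case $\Phi$ has order $2$, so the automorphism produced is a non-trivial involution.

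The step I expect to be the main obstacle is the two discriminant-form computations: deciding exactly when $\Id_{\Pic}\oplus(-\Id_{\Pic^\bot})$, and when the ray-exchanging reflection extended by $\pm\Id$ on $\Pic^\bot$, glue to isometries of $H^2(S^{[2]},\Z)$. This is where the arithmetic conditions ($e=1$, respectively $\cP_e(-1)$ solvable) genuinely enter, and it requires care with the identification of the glue group of $\Pic(S^{[2]})\oplus\Pic(S^{[2]})^\bot$ inside the discriminant group of $H^2(S^{[2]},\Z)$; everything else is bookkeeping with the cone description of Example~\ref{ex:NefConesHilbertSchemes}.
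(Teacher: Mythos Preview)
The paper does not give its own proof of Theorem~\ref{bbb}: it is stated with attribution to \cite{bcs}, with only the one-line indication that ``using the Torelli Theorem~\ref{tor1}, one can determine the group $\Aut(S^{[2]})$''.\ Your proposal fills in exactly this sketch, and does so correctly: reduce via injectivity of $\Psi^A$ and Torelli to Hodge isometries of $H^2(S^{[2]},\Z)$ with $\Phi|_{\Pic^\bot}=\pm\Id$ preserving $\Nef(S^{[2]})$; split into the cases $\Phi|_{\Pic}=\Id$ (forcing $\eps=-1$ and, by the glue condition on $\langle L_2/2e\rangle$, $e=1$) and $\Phi|_{\Pic}$ the ray-exchanging reflection; then use Example~\ref{ex:NefConesHilbertSchemes}(b) to see that the second extremal ray carries a primitive class of square $2e$ only when $\cP_{4e}(5)$ is not solvable and $e$ is not a perfect square, and finally that the reflection glues to $H^2$ iff $a_1\equiv-1\pmod{2e}$, equivalently $\cP_e(-1)$ is solvable.

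This is precisely the method the paper carries out in detail for the neighbouring results: the same reduction and the same extension/gluing analysis appear in the proof of Proposition~\ref{autfw*}, and the exact Pell-equation reformulation of the gluing condition (``there exist positive integers $r,s$ with $r^2-es^2=\eps$ and $a_1+b_1\sqrt{e}=(r+s\sqrt{e})^2$'') is stated in the proof of Proposition~\ref{bbb2}, with a reference back to \cite[Lemma~5.2]{bcs}.\ So your approach is not merely correct but is the paper's (and \cite{bcs}'s) approach; the ``main obstacle'' you anticipate is handled by the same discriminant-form bookkeeping the paper uses elsewhere.

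Two small points worth tightening: the inference ``$b_2-\rho$ odd $\Rightarrow$ $\Phi|_{\Pic^\bot}=\pm\Id$'' really goes through the simplicity of the rational Hodge structure on the transcendental lattice (as in the proof of Proposition~\ref{bbb2} and footnote~\ref{fsimple}); and the appeal to Theorem~\ref{tor1} for the converse direction implicitly uses that every such isometry is a monodromy operator, which is automatic here since $D(\L_{\KKK^{[2]}})\simeq\Z/2\Z$---the paper makes the same shorthand in the proof of Proposition~\ref{autfw*}.
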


The non-trivial  automorphism in Theorem~\ref{bbb} is then unique and an anti-symplectic involution.\ 
When $e\ge 2$, this involution acts on $H^2(S^{[2]} ,\Z)$ as the symmetry $s_D$ about the line spanned by the square-2 class $D:=b_{-1} L_2  - a_{-1}\delta$, where $(a_{-1},b_{-1})$ is the minimal solution of the  equation $\cP_e(-1)$.\ 
When $e=1$, this involution is induced by an involution of $(S,L)$ and its acts on $H^2(S^{[2]} ,\Z)$ as the symmetry about the plane $\Pic(S^{[2]})$.

\begin{exam}\label{sma}
Theorem~\ref{bbb} applies for example for $e=m^2+1$ with $m\ne 2$, or $e=13$.\
When $e=2$, the surface $S$ is a quartic in $\P^3$ which contains no lines nor conics, and the involution $\sigma$ of $S^{[2]}$ is the Beauville involution: it sends a pair of points in $S$ to the residual intersection with $S$ of  the line that they span.\ We have   $D=L_2-\delta$ and $s_D(L_2)= 3L_2-4\delta$ (\cite[Th\'eor\`eme 4.1]{deb},~\cite[Section 6.1]{bcs}); the quotient $S^{[2]}/\sigma$ is a triple cover of the Pl\"ucker quadric $\Gr(2,4)\subset \P^5$.\ When $e\ge 3$, the quotient  $S^{[2]}/\sigma$ is an EPW sextic (Corollary~\ref{cor53}). 
\end{exam}

\begin{prop}\label{bbb2}
Let $(S,L)$ be a polarized K3 surface of degree $2e$ with Picard group $\Z L$.\  The group $\Bir(S^{[2]})$ is trivial except in the following cases:
\begin{itemize}
\item  $e=1$, or the equation $\cP_{e}(-1)$ is solvable and the equation $\cP_{4e}(5)$ is not, in which cases $\Aut(S^{[2]})=\Bir(S^{[2]})\isom\Z/2\Z$;
\item $e>1$, and $e=5$ or $5\nmid e$, and both equations  $\cP_{e}(-1)$ and $\cP_{4e}(5)$ are solvable, in which case $\Aut(S^{[2]})=\{\Id\}$ and $\Bir(S^{[2]})\isom\Z/2\Z$.\footnote{There are  cases where both equations $\cP_{e}(-1)$ and $\cP_{4e}(5)$ are solvable and $5\nmid e$; for example, $e=29$.}
\end{itemize}
\end{prop}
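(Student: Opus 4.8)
The plan is to reduce the computation of $\Bir(S^{[2]})$ to lattice theory and then read off the answer from Theorem~\ref{bbb}. Since the movable cone of $S^{[2]}$ is rational (Example~\ref{ex:NefConesHilbertSchemes}(a)), the group $\Bir(S^{[2]})$ is finite by \cite[Theorem~1.3(2)]{ogu}, and the morphism $\Psi^B_{S^{[2]}}$ is injective. By Markman's birational Torelli theorem, together with the equality $\mathrm{Mon}^2(S^{[2]})=O^+(H^2(S^{[2]},\Z))$ valid for $\KKK^{[2]}$-type, the image of $\Psi^B_{S^{[2]}}$ is the group of orientation-preserving Hodge isometries $\Phi$ of $H^2(S^{[2]},\Z)$ such that $\Phi(\overline{\Mov(S^{[2]})})=\overline{\Mov(S^{[2]})}$; likewise the image of $\Psi^A_{S^{[2]}}$ is obtained on replacing $\overline{\Mov(S^{[2]})}$ with $\overline{\Nef(S^{[2]})}$. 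Because $S$ is very general, the transcendental lattice $T:=\Pic(S^{[2]})^{\bot}$ carries no Hodge isometry other than $\pm\Id$; hence any $\Phi$ as above preserves $\Pic(S^{[2]})=\Z L_2\oplus\Z\delta$ and restricts to $\eps(\Phi)\Id_T$ for some $\eps(\Phi)\in\{-1,1\}$.

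First I would dispose of the isometries acting trivially on $\Pic(S^{[2]})$: such a $\Phi$ is $\Id$ or the involution $\theta$ equal to $\Id$ on $\Pic(S^{[2]})$ and to $-\Id$ on $T$. The gluing realizing $H^2(S^{[2]},\Z)$ as an overlattice of $\Pic(S^{[2]})\oplus T$ is the graph of the canonical anti-isometry $D(\Z L_2)\isom D(T)$ (both cyclic of order $2e$), trivial on the $D(\Z\delta)$-summand; it is stable under $(\Id_{\Pic(S^{[2]})},-\Id_T)$ if and only if $2e\mid 2$. So $\theta$ is an integral isometry exactly when $e=1$, in which case it is induced by the covering involution of the degree-$2$ surface $S$ and, by Theorem~\ref{bbb}, $\Aut(S^{[2]})=\Bir(S^{[2]})=\langle\theta\rangle\isom\Z/2\Z$. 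Assume now $e\ge 2$; then restriction to $\Pic(S^{[2]})$ is injective on $\Psi^B_{S^{[2]}}(\Bir(S^{[2]}))$, and by \cite[Proposition~3.1(2)]{ogu} a non-trivial element restricts to an involution of $\Pic(S^{[2]})$ exchanging the two extremal rays of $\overline{\Mov(S^{[2]})}$. If $e$ is a perfect square these rays are spanned by $L_2$ (of positive square $2e$) and by the isotropic class $L_2-\sqrt{e}\,\delta$ (Example~\ref{ex:NefConesHilbertSchemes}(a)), which cannot be interchanged by an isometry, so $\Bir(S^{[2]})$ is trivial. If $e$ is not a perfect square, let $(a_1,b_1)$ be the minimal solution of $\cP_e(1)$; the extremal rays are then spanned by $L_2$ and $a_1L_2-eb_1\delta$, there is a unique involution $\Phi_0\in O(\Pic(S^{[2]}))$ interchanging them, given by $\Phi_0(L_2)=a_1L_2-eb_1\delta$ and $\Phi_0(\delta)=b_1L_2-a_1\delta$, and $\Bir(S^{[2]})\isom\Z/2\Z$ precisely when $\Phi_0$ lifts to an element of $\Psi^B_{S^{[2]}}(\Bir(S^{[2]}))$ — every such lift automatically preserving $\overline{\Mov(S^{[2]})}$, since it exchanges its two extremal rays.

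Next I would determine when $\Phi_0$ lifts. Since $T$ has no Hodge isometry besides $\pm\Id$, a lift has the form $\Phi_0\oplus\eps\Id_T$ with $\eps\in\{-1,1\}$; as $\Phi_0$ preserves the positive cone of $\Pic(S^{[2]})$, this isometry of $(\Pic(S^{[2]})\oplus T)\otimes\Q$ is orientation-preserving, hence a monodromy Hodge isometry the moment it is integral, that is, the moment it stabilizes the gluing subgroup of $D(\Pic(S^{[2]}))\oplus D(T)$. Writing $\Pic(S^{[2]})^{\vee}=\Z(L_2/2e)\oplus\Z(\delta/2)$, I compute the induced action of $\Phi_0$ on $D(\Pic(S^{[2]}))$ and compare it with the gluing above; I expect this to yield the criterion: $\Phi_0\oplus\eps\Id_T$ is integral if and only if $b_1$ is even and $2e\mid a_1-\eps$. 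A brief Pell-equation computation — put $a_1=\eps+2et$ and $b_1=2b_1'$, so that $b_1'^{\,2}=t(\eps+et)$ with coprime factors, whose signs force $t>0$ and each factor to be a square — then shows that this holds for some $\eps$ if and only if $\cP_e(-1)$ is solvable, in which case necessarily $\eps=-1$ and $(a_1,b_1)=(a_{-1}^{2}+eb_{-1}^{2},\,2a_{-1}b_{-1})$, where $(a_{-1},b_{-1})$ is the minimal solution of $\cP_e(-1)$.

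Assembling these facts, for $e\ge 2$ the group $\Bir(S^{[2]})$ is isomorphic to $\Z/2\Z$ when $e$ is not a perfect square and $\cP_e(-1)$ is solvable, and is trivial otherwise (a perfect square $\ge 4$ never lets $\cP_e(-1)$ be solvable). To recover the precise form of the statement I would split on the solvability of $\cP_{4e}(5)$. If $\cP_{4e}(5)$ is not solvable, then $\overline{\Nef(S^{[2]})}=\overline{\Mov(S^{[2]})}$ by Example~\ref{ex:NefConesHilbertSchemes}(b), so $\Phi_0$ also preserves the nef cone; when moreover $\cP_e(-1)$ is solvable, $\Phi_0$ is the reflection $s_D$ of Theorem~\ref{bbb} (its fixed line is spanned by $(1+a_1)L_2-eb_1\delta=2eb_{-1}(b_{-1}L_2-a_{-1}\delta)$), whence $\Aut(S^{[2]})=\Bir(S^{[2]})\isom\Z/2\Z$ — the first case. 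If $\cP_{4e}(5)$ is solvable, the nef cone is strictly contained in the movable cone, $\Phi_0$ fails to preserve it, so $\Aut(S^{[2]})=\{\Id\}$ in accordance with Theorem~\ref{bbb}, while $\Bir(S^{[2]})\isom\Z/2\Z$ exactly when $\cP_e(-1)$ is solvable; and a quadratic-reciprocity argument of the kind used in the lemma inside the proof of Proposition~\ref{autfw*} shows that $\cP_e(-1)$ and $\cP_{4e}(5)$ can be simultaneously solvable only when $5\nmid e$ or $e=5$ — the second case. The technical heart of the argument is the discriminant-form computation of the third paragraph, pinning down the integrality condition for $\Phi_0\oplus\eps\Id_T$ and matching it with the solvability of $\cP_e(-1)$; the quadratic-reciprocity check underpinning the clause ``$5\nmid e$ or $e=5$'' is the other delicate point.
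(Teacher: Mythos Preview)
Your proof is correct and somewhat cleaner than the paper's.\ Both arguments share the lattice-theoretic core --- the integrality criterion for $\Phi_0\oplus\eps\Id_T$ (namely $b_1$ even and $2e\mid a_1-\eps$) and the Pell computation forcing $\eps=-1$ and the solvability of $\cP_e(-1)$ --- but they differ in how they obtain the birational involution when $\cP_{4e}(5)$ is solvable.\ You invoke Markman's birational Torelli together with $\Mon^2(S^{[2]})=O^+(H^2(S^{[2]},\Z))$ at the outset, identifying $\Im(\Psi^B_{S^{[2]}})$ with the group of monodromy Hodge isometries preserving $\overline{\Mov(S^{[2]})}$; existence of the involution then drops out of the lattice computation.\ The paper instead first bounds $|\Bir(S^{[2]})|\le 2$ by the same isometry analysis, and then \emph{constructs} the involution geometrically: it uses Nagell's Theorem~110 to analyze the arrangement of flop walls inside $\Mov(S^{[2]})$, shows (when $5\nmid e$) that they come in pairs symmetric about the line $\R D$, and exhibits a birational model $X'$ whose nef cone is $s_D$-invariant, whence a biregular involution on $X'$ inducing the birational one on $S^{[2]}$.\ Your route is shorter; the paper's yields the extra geometric content that for $5\nmid e$ the involution is biregular on some birational model, whereas for $e=5$ it is not.\ One small correction: the clause ``$5\nmid e$ or $e=5$'' is not really a quadratic-reciprocity argument of the type in the lemma inside Proposition~\ref{autfw*}, but a norm computation in $\Z[\sqrt e]$ --- when $5\mid e$, Nagell's theorem gives $x_5=\overline{x_5}\,x_1$, and writing $x_1=x_{-1}^2$ (using $\cP_e(-1)$) one finds $x_5\overline{x_{-1}}$ is purely imaginary of norm $-5$, forcing $e\mid 5$.
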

 
In the second case, there is a difference between the case $e=5$ and the case $5\nmid e$: when $5\nmid e$, there is a hyperk\"ahler fourfold (in fact, a double EPW sextic) birational to $S^{[2]}$ on which the involution is biregular, but not when $e=5$.

\begin{proof}
If $\phi\in \Bir(S^{[2]})$ is not biregular, $  \phi^* $ acts on the movable cone $\Mov(S^{[2]})$ in such a way that $ \phi^*(\Amp(S^{[2]}))\cap \Amp(S^{[2]})=\vide$.\ This implies $\Mov(S^{[2]})\ne \Nef(S^{[2]})$ hence, by Example~\ref{ex:NefConesHilbertSchemes}, the equation $\cP_{4e}(5)$ has a minimal solution $(a_5,b_5)$.\  By Theorem~\ref{bbb}, the group $\Aut(S^{[2]})$ is then trivial.\

Moreover, $\phi^*$ maps one extremal ray of the movable cone (spanned by $L_2$) to the other extremal ray (spanned by the primitive vector $a_1L_2-eb_1\delta$).\ Therefore, we have $ \phi^*(L_2)= a_1L_2-eb_1\delta$ and, by applying this relation to $\phi^{-1}$, also $ \phi^*(a_1L_2-eb_1\delta)=L_2$.\ 
This implies that  $ \phi^*$ is a   completely determined involution of $\Pic(S^{[2]})$.\ 
In particular, $ \phi^2 $ is an automorphism, hence is trivial: $\phi$ is an involution.

The transcendental lattice   $\Pic(S^{[2]} )^\bot \subset H^2( S^{[2]},\Z)$ carries a simple rational Hodge structure.\footnote{This is a classical fact found for example  in~\cite[Lemma~3.1]{huyk3}\label{fsimple}.}\ 
Since the eigenspaces of the involution $\phi^*$ of $ H^2( S^{[2]},\Z)$ are sub-Hodge structures, the restriction of $\phi^*$ to $\Pic(S^{[2]} )^\bot$ is $\eps \Id$, with $\eps\in\{-1, 1\}$.\ On $\Pic(S^{[2]} )$, we saw that $\phi^*$ has matrix $\left(\begin{smallmatrix}a_1& b_1 \\-eb_1&-a_1\end{smallmatrix}\right)$ in the basis $(L_2,\delta)$.\ The extension from $ \Pic(S^{[2]} )\oplus \Pic(S^{[2]} )^\bot$ to the overlattice $H^2( S^{[2]},\Z)$ of such an involution can be studied as in the proof of Proposition~\ref{autfw*} (see also~\cite[Lemma 5.2]{bcs} when $\eps=-1$).\ 
The conclusion is that  there exist positive integers $r$ and $s$ such that $r^2-es^2=\eps$  and $a_1+b_1\sqrt{e}=(r+s\sqrt{e})^2$.\ The value $\eps=1$  would contradict the minimality of the solution $(a_1,b_1)$ to the equation $\cP_e(1)$.\ Hence we have $\eps=-1$ and $(r,s)$ is the minimal solution to the equation $\cP_e(-1)$.\ 
In particular,  $\phi^*$ is a   completely determined involution of $H^2( S^{[2]},\Z)$ and, since $\Psi_X^B$ is injective, $\Bir(S^{[2]})$ has at most 2 elements.

By~\cite[Theorem~110]{nage}, the solutions to the equation $\cP_{4e}(5) $ are all given by $\pm x_5x_1^m$, $m\in\Z$, where 
 $x_5:= a_5+2b_5\sqrt{e}$ and $x_1:= a_1+  b_1\sqrt{e}$.\ The associated positive elements of $\Z[\sqrt{e}]$ are ordered as follows
\[
\cdots <x_5x_1^{-2}\le\overline{ x_5}x_1^{-1}<x_5x_1^{-1}\le \bar x <\sqrt{5}<x_5\le \overline{ x_5}x_1<x_5x_1\le \overline{ x_5}x_1^2<x_5x_1^2<\cdots
\]
Still by~\cite[Theorem~110]{nage},
\begin{itemize}
\item[(a)] either $5\mid e$, the conjugate solutions are the same, and $x_5= \overline{ x_5}x_1$;
\item[(b)] or $5\nmid e$, the conjugate solutions are different, and $x<\overline{ x_5}x_1^{-1}$.\end{itemize}
Set $x_{-1}:= r+s\sqrt{e}$, so that $x_1=x_{-1}^2$.\ 
In  case (a), we have  $\overline{x_5\overline{ x_{-1}}}=-x_5\overline{ x_{-1}}$ and since $\overline{x_5\overline{ x_{-1}}}x_5\overline{ x_{-1}}=-5$, we obtain $e=5$.\ In that case,   there is indeed a non-trivial birational involution on $S^{[2]}$ (Example~\ref{exa311}).

In terms of the rays generated by $aL_2-2eb\delta$, where $(a,b)$ is a solution to the equation $\cP_{4e}(5) $, multiplying $x=a+2b\sqrt{e}$ by $x_1$ corresponds to applying the rotation $R:=\left(\begin{smallmatrix}
   a_1&-b_1\\-eb_1&a_1
   \end{smallmatrix}\right)$, which sends the extremal ray $\R_{\ge0}L_2$ of the movable cone to its other extremal ray $\R_{\ge0}(a_1L_2-eb_1\delta)$; the operation $x\mapsto \bar xx_1 $ therefore corresponds to applying the reflection $R\left(\begin{smallmatrix}
   1&0\\0&-1
   \end{smallmatrix}\right)=\left(\begin{smallmatrix}
   a_1&b_1\\-eb_1&-a_1
   \end{smallmatrix}\right)$, which is the symmetry  $s_D$ about the line spanned by the class $D:=b_{-1} L_2  - a_{-1}\delta$

Geometrically, the situation is clear  in  case (b):  we have exactly two rays inside $ \Mov(S^{[2]})$ which are symmetric about the line $\R D$.\  As explained in Remark~\ref{rmk:NefConeHK4}(b),  the  three associated  chambers  correspond  to the hyperk\"ahler fourfolds birational to $S^{[2]}$: the ``middle one'' corresponds to a fourfold $X $ whose nef cone  is  preserved by the involution $s_D$.\ There is a biregular involution on $X $ which induces a birational involution on~$S^{[2]}$.
\end{proof}

\begin{exam}[The O'Grady involution]\label{exa311}
A general polarized K3 surface $S$ of degree 10  is the transverse intersection of the Grassmannian $\Gr(2,\C^5)\subset \P(\bbw2\C^5)=\P^9$, a quadric $Q\subset \P^9$, and a $\P^6\subset \P^9$.\   
A general point of $S^{[2]}$ corresponds to $V_2,W_2\subset V_5$.\ Then 
$$\Gr(2,V_2\oplus W_2)\cap S=\Gr(2,V_2\oplus W_2)\cap Q\cap \P^6 \cap \bbw2(V_2\oplus W_2))\subset\P^2
$$
is the intersection of two general conics in $\P^2$ hence consists of 4 points, including $[V_2]$ and $[W_2]$.\ The (birational) O'Grady   involution $S^{[2]}\dra S^{[2]}$ takes the pair of points $([V_2],[W_2])$  to the residual two points of this intersection.
\end{exam}


\begin{thebibliography}{BLMM}

 \bibitem[A]{add}   Addington, N.,  On two rationality conjectures for cubic fourfolds, {\it Math. Res. Lett.} {\bf23} (2016), 1--13.
 
 \bibitem[AV]{amve} Amerik, E., Verbitsky, M., Teichm\"uller space for hyperk\"ahler and symplectic structures, {\it J. Geom. Phys.} {\bf 97} (2015), 44--50.

 \bibitem[BHT]{bht} Bayer, A., Hassett, B., Tschinkel, Y., Mori cones of holomorphic symplectic varieties of K3 type, {\it Ann. Sci. \'Ecole Norm. Sup.} {\bf 48} (2015), 941--950.
  
 \bibitem[BM1]{bama1}   Bayer, A., Macr\`i, E., Projectivity and birational geometry of Bridgeland moduli spaces, {\it J. Amer. Math. Soc.}  {\bf 27} (2014), 707--752.

 \bibitem[BM2]{bama}   \bysame, MMP for moduli of sheaves on K3s via wall-crossing: nef and movable cones, Lagrangian fibrations, {\it Invent. Math.}  {\bf 198} (2014),  505--590.

 \bibitem[B1]{beaa} Beauville, A., Vari\'et\'es K\"ahleriennes dont la premi\`ere classe de Chern est nulle, {\em J. Differential Geom.} {\bf18} (1983), 755--782.
 
 \bibitem[B2]{bea} \bysame, Some remarks on K\"ahler manifolds with $c_1= 0$. {\it Classification of algebraic and analytic  manifolds (Katata, 1982)},  1--26, Progr. Math. {\bf 39}, Birkha\"user Boston, Boston, MA, 1983. 

 \bibitem[BD]{bedo} Beauville, A., Donagi, R., La vari\'{e}t\'{e} des droites d'une hypersurface cubique de dimension 4, {\it C. R. Acad. Sci. Paris S\'er. I Math.}  {\bf 301}  (1985),  703--706.
 
 \bibitem[BLMM]{ber} Bergeron, N., Li, Z., Millson, J., M\oe glin, C., The Noether--Lefschetz conjecture and generalizations, {\it Invent. Math.} {\bf 208} (2017), 501--552.
  
 \bibitem[BCMS]{sbisom} Boissi\`ere, S., Camere, C., Mongardi, G., Sarti, A., Isometries of ideal lattices and hyperk\"ahler manifolds,  {\it Int. Math. Res. Not.} (2016),  963--977.

 \bibitem[BCS]{sbcomp} Boissi\`ere, S., Camere, C., Sarti, A., Complex ball quotients from manifolds of K3$^{[n]}$-type, eprint  {\tt   arXiv:1512.02067}.
 
 \bibitem[BCNS]{bcs}  Boissi\`ere, S., Cattaneo, A., Nieper-Wi\ss kirchen, M.,   Sarti, A., The automorphism group of the Hilbert scheme of two points on a generic projective K3 surface, in {\em Proceedings of the Schiermonnikoog conference, K3 surfaces and their  moduli,} Progress in Mathematics {\bf315}, Birkh\"auser, 2015. 
 
 \bibitem[Bo]{bou} Boucksom, S., Divisorial Zariski decompositions on compact complex manifolds, {\it Ann. Sci. \'Ecole Norm. Sup.} {\bf 37} (2004), 45--76.

 \bibitem[CU]{clul} Clozel, L., Ullmo, E., \'Equidistribution de sous-vari\'et\'es sp\'eciales, {\it Ann. of Math.} {\bf161} (2005), 1571--1588.

 \bibitem[D1]{deb}  Debarre, O.,  Un contre-exemple au th\'eor\`eme de Torelli pour les vari\'et\'es symplectiques irr\'eductibles, {\it C. R. Acad. Sci. Paris} {\bf 299} (1984), 681--684. 

 \bibitem[D2]{debsurvey} \bysame, Hyperk\"ahler manifolds, lecture notes in preparation.

 \bibitem[DK]{dk1}  Debarre, O., Kuznetsov, A., Gushel--Mukai varieties: classification and birationalities,   {\it Algebr. Geom.} {\bf5}   (2018),	15--76.

 \bibitem[DIM]{dims} Debarre, O., Iliev, A., Manivel, L., Special prime Fano fourfolds of degree 10 and index 2, {\it Recent Advances in Algebraic Geometry,} 123--155,   C. Hacon, M. Musta\c t\u a, and M. Popa eds., London Mathematical Society Lecture Notes Series {\bf417}, Cambridge University Press, 2014.

 \bibitem[GHS1]{ghs0} Gritsenko, V., Hulek, K., Sankaran, G.K., The Kodaira dimension of the moduli of K3 surfaces, {\it Invent. Math.} {\bf 169} (2007), 519--567.

 \bibitem[GHS2]{ghs} \bysame, Moduli spaces of irreducible symplectic manifolds, {\it Compos. Math.}  {\bf 146} (2010),  404--434. 

 \bibitem[GHS3]{ghssur} \bysame,  Moduli of K3 surfaces and irreducible symplectic manifolds, {\it Handbook of moduli.} Vol. I, 459--526, Adv. Lect. Math.  {\bf 24}, Int. Press, Somerville, MA, 2013.

 \bibitem[GHJ]{ghj} Gross, M., Huybrechts, D., Joyce, D., {\it Calabi-Yau manifolds and related geometries}, Lectures from the Summer School held in Nordfjordeid, June 2001, Universitext, Springer-Verlag, Berlin, 2003.

 \bibitem[H]{has}Hassett,  B.,  Special cubic fourfolds, {\it Compos. Math.}  {\bf 120} (2000),   1--23.
     
 \bibitem[HT1]{hast1} Hassett,  B., Tschinkel, Y.,  Rational curves on holomorphic symplectic fourfolds, {\it Geom. Funct. Anal.}  {\bf 11} (2001),   1201--1228.
 
 \bibitem[HT2]{hast2} \bysame, Moving and ample cones of holomorphic symplectic fourfolds, {\it Geom. Funct. Anal.}  {\bf 19} (2009),   1065--1080.

 \bibitem[HT3]{hast3} \bysame, Flops on holomorphic symplectic fourfolds and determinantal cubic hypersurfaces, {\it J. Inst. Math. Jussieu}  {\bf9} (2010), 125--153. 

 \bibitem[HT4]{hast4} \bysame, Hodge theory and Lagrangian planes on generalized Kummer fourfolds, {\it Mosc. Math. J.} {\bf 13} (2013),   33--56. 

 \bibitem[Hu1]{huy1} Huybrechts, D., Compact hyperk\"ahler manifolds: Basic results, {\it Invent. Math.} {\bf 135} (1999), 63--113.

 \bibitem[Hu2]{huy} \bysame, The K3 category of a cubic fourfold,  {\it Compos. Math.}  {\bf153} (2017), 586--620.

 \bibitem[Hu3]{huyk3} \bysame, {\it Lectures on K3 surfaces,} Cambridge Studies in Advanced Mathematics {\bf158},  Cambridge University Press, 2016.

  \bibitem[L]{lai} Lai, K.-W., New cubic fourfolds with odd-degree unirational parametrizations, {\it Algebra Number Theory} {\bf11} (2017),  1597--1626.
 
 \bibitem[La]{laza} Laza, R., The moduli space of cubic fourfolds via the period map, {\it Ann. of Math.} {\bf172} (2010),  673--711.

 \bibitem[M1]{mar2}  Markman, E., Integral constraints on the monodromy group of the hyperK\"ahler resolution of a symmetric product of a K3 surface, {\it Internat. J. Math.}  {\bf 21} (2010), 169--223. 

 \bibitem[M2]{marsur}  \bysame,  A survey of Torelli and monodromy results for holomorphic-symplectic varieties, in {\it Complex and differential geometry,} 257--322,
Springer Proc. Math. {\bf8}, Springer, Heidelberg, 2011. 

 \bibitem[M3]{mar3} \bysame, Prime exceptional divisors on holomorphic symplectic varieties and monodromy-reflections, {\it Kyoto J. Math.} {\bf 53} (2013), 345--403.
 
 \bibitem[MM]{mm} Markman, E., Mehrotra, S.,  Hilbert schemes of K3 surfaces are dense in moduli, {\it  Math. Nachr.} {\bf  290} (2017),   876--884.

 \bibitem[Ma]{mat} Matsushita, D., On almost holomorphic Lagrangian fibrations, {\it Math. Ann.} {\bf 358} (2014), 565--572.
 
 \bibitem[Mo]{mon2}  Mongardi, G., A note on the K\"ahler and Mori cones of hyperk\"ahler manifolds, {\it Asian J. Math.} {\bf 19} (2015), 583--591.

 \bibitem[N]{nage} Nagell, T., {\it Introduction to number theory,} Second edition, Chelsea Publishing Co., New York, 1964.
 
 \bibitem[Ni]{nik}      Nikulin, V.,     Integral symmetric bilinear forms and some of their applications,   {\it   Izv. Akad. Nauk SSSR Ser. Mat.}  {\bf   43}  (1979), 111--177.
English transl.: {\it Math.\ USSR Izv.} {\bf 14} (1980), 103--167.
 
 \bibitem[Nu]{nue} Nuer, H., Unirationality of moduli spaces of special cubic fourfolds and K3 surfaces, in {\it Rationality Problems in Algebraic Geometry, Levico Terme, Italy, 2015,} 161--167,   Lecture Notes in Math. {\bf 2172}, Springer International Publishing, 2016.

 \bibitem[O1]{og2}	O'Grady,  K.,      Dual double EPW-sextics and their periods, {\it  Pure Appl.\ Math.\ Q.}  {\bf  4}  (2008),   427--468.


  
 \bibitem[O2]{og4}	\bysame,    Double covers of EPW-sextics, {\it Michigan Math.\ J.} {\bf 62} (2013), 143--184. 

 \bibitem[O3]{og6}	\bysame,    Periods of double EPW-sextics,  {\it Math. Z.} {\bf 280} (2015),  485--524.
 
  \bibitem[O4]{og7}	\bysame, Irreducible symplectic 4-folds numerically equivalent to $(K3)^{[2]}$, {\it Commun. Contemp. Math.} {\bf10} (2008),  553--608.
  
    \bibitem[O5]{og3}	\bysame,   EPW-sextics: taxonomy, {\it Manuscripta Math.}  {\bf138} (2012), 221--272.


 \bibitem[Og1]{ogu}        Oguiso, K., Automorphism groups of Calabi--Yau manifolds of Picard number two, {\em J. Algebraic Geom.} {\bf 23} (2014), 775--795.

 \bibitem[Og2]{ogu5}  \bysame, K3 surfaces via almost-primes, {\it Math. Res. Lett.} {\bf9}   (2002),   47--63. 

 \bibitem[TV]{vas} Tanimoto, S., V\'arilly-Alvarado, A., Kodaira dimension of moduli of special cubic fourfolds, to appear in {\it J. reine angew. Math.}

 \bibitem[V]{ver} Verbitsky, M.,   Mapping class group and a global Torelli theorem for hyperk\"ahler manifolds. Appendix~A by Eyal Markman, {\it  Duke Math. J.}  {\bf 162} (2013), 2929--2986.

 \bibitem[WW]{ww}  Wierzba, J., Wi\'sniewski, J., Small contractions of symplectic fourfolds, {\em Duke Math. J.} {\bf 120} (2003), 65--95.

\end{thebibliography}
\end{document}